\numberwithin{equation}{section}
\newtheorem{theorem}{Theorem}[section]
\newtheorem{lemma}[theorem]{Lemma}
\newtheorem{proposition}[theorem]{Proposition}
\newtheorem{definition}[theorem]{Definition}
\newtheorem{corollary}[theorem]{Corollary}
\newtheorem{remark}[theorem]{Remark}
\DeclareMathOperator{\Real}{Re \,}
\renewcommand{\d}{{\, \mathrm d}}
\newcommand{\eps}{\varepsilon}
\newcommand{\E}{\mathbb{E}}
\newcommand{\PP}{\mathbb{P}}
\newcommand{\C}{\mathbb{C}}
\newcommand{\R}{\mathbb{R}}
\newcommand{\N}{{\mathbb{N}}}
\title{Solitary waves in a stochastic parametrically forced nonlinear Schr\"odinger equation}
\date{\today}
\author{Manuel V.~Gnann\textsuperscript{1}}
\address{\textsuperscript{1}Delft Institute of Applied Mathematics, Faculty of Electrical Engineering, Mathematics and Computer Science, Delft University of Technology, Mekelweg 4, 2628 CD Delft, Netherlands}
\email{M.V.Gnann@tudelft.nl}
\author{Rik W.S.~Westdorp\textsuperscript{2}}
\address{\textsuperscript{2}Mathematical Institute, Leiden University, P.O. Box 9512, 2300 RA Leiden, The Netherlands}
\email{R.W.S.Westdorp@math.leidenuniv.nl}
\author{Joris van~Winden\textsuperscript{1}}
\email{J.vanWinden@tudelft.nl}
\keywords{Stochastic partial differential equations, nonlinear Schr\"odinger equation, solitary wave, orbital stability, phase tracking}
\subjclass[2020]{37H30, 35C08, 35Q55, 35Q60, 35R60, 60H15}
\thanks{This work supersedes \url{https://arxiv.org/abs/2208.01945}. Early versions of the results contained in this paper can be found in the MSc theses of the second and third author, both prepared under the supervision of the first author at Delft University of Technology. The second author acknowledges support from the Dutch Research Council (NWO) (grant 613.009.137). The third author is supported by a DIAM fast-track scholarship. The authors thank Mark Veraar for discussions and valuable suggestions on the manuscript.}
\begin{document}

\begin{abstract}
We study a parametrically forced nonlinear Schr\"odinger (PFNLS) equation, driven by multiplicative translation-invariant noise.
We show that a solitary wave in the stochastic equation is orbitally stable on a timescale which is exponential in the inverse square of the noise strength.
We give explicit expressions for the phase shift and fluctuations around the shifted wave which are accurate to second order in the noise strength.
This is done by developing a new perspective on the phase-lag method introduced by Kr\"uger and Stannat.
Additionally, we show well-posedness of the equation in the fractional Bessel space $H^{s}$ for any $s \in [0,\infty)$, demonstrating persistence of regularity.
\end{abstract}

\maketitle

\section{Introduction}
\subsection{The parametrically forced nonlinear Schr\"odinger equation}
Optic fibers that act as waveguides for electromagnetic signals form the basis for systems of fiber-optic communications, enabling long-distance communication at high bandwidth~\cite{agrawal2012}. 
The behavior of a pulse propagating through an optic fiber is governed by the nonlinear Schr\"odinger (NLS) equation~\cite{agrawal2000}, which is an archetypal example of a nonlinear dispersive equation that is known to support solitary waves. 
The NLS equation has many applications in physics, for instance in the description of Bose--Einstein condensates~\cite{bronski}, deep-water waves~\cite{vitanov}, and plasma oscillations~\cite{shukla}. 
In these applications, the NLS equation describes the complex amplitude of a wave packet propagating through a nonlinear medium. 
We refer to~\cite{sulem} for a detailed treatment of the physical background. 

In optic fibers, the nonlinear behavior arises due to a response of the refractive index of the fiber to an applied electric field known as the Kerr effect, leading to a cubic nonlinear term in the equation.
Effective signal transmission in optic communication systems may be obstructed by the presence of linear loss in the fiber, weakening the signal as it propagates. 
\citeauthor{kutz} proposed a method of compensating loss using periodic phase-sensitive amplification~\cite{kutz}, which has since become a popular approach for increasing feasible transmission lengths. 
The approach is modelled by the parametrically forced nonlinear Schr\"odinger (PFNLS) equation:
\begin{equation}
    \label{eq:pfnls}
    \d u = (i\Delta u -i \nu u -\epsilon (\gamma u -\mu \overline{u}))\d t+i\kappa \lvert u\rvert^2 u \d t \quad \text{for } (x,t)\in \R \times \R^+.
\end{equation}
Here, the complex-valued function $u(x,t)$ denotes the envelope of the electric field in an optic fiber, $t$ is the distance along the fiber, and $x$ denotes time in a translating frame that moves with the group velocity of light. 
The constants $\gamma>0$ and $\mu>0$ model linear loss in the fiber and phase-sensitive amplification, respectively. 
The constant $\nu \in \R$ models a phase advance of the signal carrier, and the constant $\kappa>0$ denotes the strength of the Kerr effect in the fiber. 
In this model, the local effect of the periodically spaced phase-sensitive amplifiers is averaged over the spacing length of the amplifiers. This description assumes that the amplifiers are closely spaced, which is valid for long propagation lengths~\cite{kath}. 
In particular, the model applies well to a re-circulating loop used for long-term storage of pulses in optical networks.

In case that $\mu>\gamma$, i.e. enough amplification is supplied, equation \eqref{eq:pfnls} admits solitary standing wave solutions $u^*$ of the form
\begin{align}
    \label{eq:soliton}
    u^*(x)= \sqrt{\frac{2 (\nu+\epsilon \mu \sin(2\theta))}{\kappa}}\text{sech}(\sqrt{\nu+\epsilon \mu \sin(2\theta)}x)e^{i\theta}, 
\end{align}
where $\theta \in [0, 2\pi)$ is a solution to $\cos(2\theta)=\gamma/\mu$.
This can be seen from~\cite[equation (1.8)]{kapitula_stability_1998} after scaling in $\kappa$ by setting $\phi = \tfrac{1}{2}\sqrt{\kappa} u$.
As equation \eqref{eq:pfnls} is translation invariant, shifting the solitary waves by an arbitrary constant $a\in \R$ produces a family of solutions.
The solitary waves for which  $\sin(2\theta)>0$ were shown to be orbitally exponentially stable by Kapitula and Sandstede~\cite{kapitula_stability_1998}: small perturbations of the solitary wave converge at an exponential rate to a suitable translate of the solitary wave. 
Solitary waves for which $\sin(2\theta)<0$ are known to be unstable~\cite{kutzkath}.

We briefly note that in the physical application of optic fiber loops, the term standing wave is misleading, as the equation describes the electric field in a moving frame. 
The standing waves \eqref{eq:soliton} represent traveling pulses, and their stability is crucial for attaining long transmission lengths of signals and for the feasibility of long-time storage.

The stability analysis in~\cite{kapitula_stability_1998} relies on computing the spectrum of the (real-)linear operator
\begin{align*}
        \mathcal{L}v = i\Delta v -i \nu v -\epsilon (\gamma v -\mu \overline{v})+i \kappa (2\lvert u^*\rvert^2 v+(u^*)^2 \overline{v})
\end{align*}
on $L^2(\R;\C)$ associated with the linearization of \eqref{eq:pfnls} around the solitary wave. 
It is known that the spectrum of the linearization is located at an $\mathcal{O}(\epsilon)$ distance to the left of the imaginary axis, except for a simple eigenvalue at zero~\cite{alexander,kapitula_stability_1998}. 
This eigenvalue arises due to the translation invariance of \eqref{eq:pfnls}. 
For $\epsilon=\nu=0$, the operator $\mathcal{L}$ corresponds to the linearization around the primary soliton in the NLS equation, and has continuous spectrum on the imaginary axis. 
The primary NLS soliton is also orbitally stable, but no exponential decay of perturbations can be expected~\cite{weinstein, pelinovsky}. 
As such, parametric forcing entails stronger linear stability.

\subsection{A stochastic equation}
In~\cite{kath}, \citeauthor{kath} discuss two mechanisms that further inhibit signal transmission by introducing noise in the system, thereby transforming the description of pulse propagation into a stochastic partial differential equation. 
In this paper, we study the evolution of the solitary wave $u^*$ \eqref{eq:soliton} in the stochastic parametrically forced nonlinear Schr\"odinger (SPFNLS) equation:
\begin{equation}
    \label{eq:spfnls}
    \d u = (i\Delta u -i \nu u -\epsilon (\gamma u -\mu \overline{u}))\d t+i\kappa|u|^2 u \d t -i u \circ (\phi * \d W) \quad \text{for } (x,t)\in \R \times \R^+.
\end{equation}
The symbol $W$ denotes a cylindrical Wiener process in the Hilbert space $L^2(\R,\R)$, meaning that $\d W$ is a space-time white noise, and $\circ$ denotes the Stratonovich product.
Here, $\phi$ is a real-valued function, which serves to regularize the noise.
Thus, $u$ is multiplied by noise which is white in time, and formally satisfies the covariance relation $\E[\d W(t,x) \d W(t,y)] = \tilde{\phi} * \phi(y-x)$ in space ($\tilde{\phi}$ denotes the reflection of $\phi$ around the origin).
Because the covariance only depends on $y-x$, equation \eqref{eq:spfnls} preserves the physically relevant symmetry of translation invariance (in a statistical sense).
This is highly relevant to our study of the motion of solitary waves.

The multiplicative noise term that we consider in \eqref{eq:spfnls} models phase noise induced by the coupling of light with the thermally excited acoustical modes of the fiber known as guided acoustic-wave Brillouin scattering (GAWBS)~\cite{kath}. 
We use the Stratonovich product, as it is more realistic for physical applications. Indeed, in the absence of parametric forcing, it allows for conservation of the $L^2(\R)$-norm~\cite[Proposition 4.1]{de_bouard_stochastic_1999}. 
Because our variable $x$ corresponds to physical time, our noise is correlated in time, which is a natural assumption in the context of GAWBS phase noise. 
The other noise effect proposed in~\cite{kath} is due to quantum effects and results in an additive noise term. We focus in the present paper only on the multiplicative GAWBS phase noise.

\subsection{Well-posedness}
Our first result concerns well-posedness of the stochastic equation \eqref{eq:spfnls}.
We show that for any $s \geq 0$, $\phi$ in the fractional Bessel space $H^s(\R;\R)$ and $u(0) \in H^s_x$, equation \eqref{eq:spfnls} has a unique mild solution $u$ taking values in the space ${C([0,T];H^s_x) \cap L^r(0,T;L^p_x)}$ for every $T > 0$ and certain pairs $(p,r)$ (see Theorem~\ref{thm:wellposed} and Definition~\ref{def:admissible}).

The `standard' SNLS equation with linear multiplicative noise (corresponding to the case $\epsilon=\nu=\gamma=\mu=0$) was first shown to be well-posed in the spaces $L^2_x$ (corresponding to $s = 0$)~\cite{de_bouard_stochastic_1999} and $H^1_x$ (corresponding to $s = 1$)~\cite{de_bouard_stochastic_2003}.
A proof of the $L^2_x$ well-posedness using stochastic Strichartz estimates is given in~\cite{hornung_nonlinear_2018}.
Since the PFNLS equation differs from the NLS equation by linear terms, our proof of well-posedness is very similar.
The main novelties are well-posedness in $H^s(\R;\R)$ for $s \in [0,\infty)\setminus\{0,1\}$ and the use of translation-invariant noise.
The translation-invariant noise, aside from being motivated by physical symmetries, is relevant to our subsequent study of the solitary waves and is not directly covered by previous results.
The well-posedness in $H^s_x$ shows that, like its deterministic counterpart, the SPFNLS (and by extension, the one-dimensional cubic SNLS) equation has \emph{persistence of regularity}, meaning that regularity of the solution is the same as the minimum of that of the noise and the initial data.
Previous results on stochastic versions of these equations have mainly been concerned with the cases $s = 0$ and $s = 1$.
\subsection{Orbital stability}
With the well-posedness of \eqref{eq:spfnls} firmly established, we turn to the stability of the solitary wave $u^*$ with $\sin(2\theta) > 0$ (see the discussion following \eqref{eq:soliton}) in the stochastic equation.
We establish that the solitary wave is orbitally stable under the multiplicative stochastic forcing in \eqref{eq:spfnls} on a timescale $T \sim \exp(\sigma^{-2})$, where $\sigma$ denotes the strength of the noise.
We describe the solution to \eqref{eq:spfnls} with initial condition close to $u^*$ using the decomposition
\begin{equation*}
    u(x,t)=u^*(x+a(t))+v(x,t),
\end{equation*}
where $a$ is a real-valued stochastic process that tracks the wave position, and $v$ an infinite-dimensional perturbation which is small when measured in the $L^2_x$-norm.
In the parabolic setting, such problems are well-studied (see e.g.~\cite{kruger_front_2014,inglis_general_2016,maclaurin_phase_2023,hamster_stability_2019}).
Rigorous results in a dispersive setting are more scarce~\cite{fukuizumi,westdorp_long-timescale_2024} and, as far as we are aware, stability on exponential timescales has not been shown before.

We give explicit expressions for $a(t)$ and $v(t)$ which are accurate to second order in $\sigma$.
Second-order results in this setting are scarce, and mostly consist of formal computations~\cite{lang_multiscale_2016}.
By developing a new perspective on an established phase-tracking method (see Section~\ref{sec:tracking}) we rigorously and efficiently prove accuracy of the second-order expressions for the first time.

To first order, the phase process $a(t)$ behaves like a Brownian motion with variance proportional to $t \sigma^2$, and the perturbation $v(t,x)$ behaves like an infinite-dimensional Ornstein-Uhlenbeck process.
More precisely, $v(t,x)$ is mean-reverting and satisfies an estimate of the form
\begin{equation}
    \mathbb{E}\bigl[\lVert v(t)\rVert_{L^2_x}^2\bigr]^{1/2} \leq C\sigma(e^{-at}\|v(0)\|_{L^2} +\min\{t^{\frac{1}{2}},1\})+\mathcal{O}(\sigma^2)
\end{equation}
(see Theorem~\ref{thm:relaxation}).
Using such bounds to control the development of a perturbation over short time-scales combined with a resetting procedure, we show that there exists a stochastic process $a(t)$ and constants $C,k,\eps' > 0$ such that
\begin{equation*}
    \mathbb{P}\Bigl[\,\sup_{t \in [0,T]} \lVert u(\cdot,t)-u^*(\cdot+a(t))\rVert_{L^2_x} \geq \eps\Bigr]\leq C Te^{-k\sigma^{-2}\eps^2}
\end{equation*}
for all $T>0$ and $0 < \sigma \leq \eps \leq \eps'$ (Proposition~\ref{prop:longterm} and Corollary~\ref{cor:longterm}).
This shows stability on a timescale $T \sim e^{k \sigma^{-2}\eps^2}$.
By a scaling argument, this is (up to better constants) the longest time for which the solitary wave can be expected to be stable, and matches the best results obtained in different settings, such as~\cite{hamster_stability_2020-1,maclaurin_phase_2023}.

\subsection{Phase tracking}
\label{sec:tracking}
When showing stochastic orbital stability, there are several different ways of defining and tracking the phase process $a(t)$ (see e.g.~\cite{kruger_front_2014,inglis_general_2016,hamster_stability_2019}).
Our method is closely related to the one developed by \citeauthor{kruger_front_2014}~\cite{kruger_front_2014,kruger_multiscale-analysis_2017}, which has also been applied by \citeauthor{eichinger_multiscale_2022} to the FitzHugh--Nagumo equation~\cite{eichinger_multiscale_2022}.
Briefly, this method consists of defining an approximation process $a_m(t)$ using the random ODE
\begin{equation*}
    \frac{\d a_m(t)}{\d t} = - m \frac{\partial \lVert u(t,x) - u^*(t,x + a_m(t)) \rVert_{L^2_x}}{\partial a_m},
\end{equation*}
and computing an SDE for $\frac{\d a_m(t)}{\d t}$.
By approximating the SDE to first order in $\sigma$ and taking $m \to \infty$, orbital stability can be shown on timescales of the order $T \sim \sigma^{-2}$.

Our method obtains a similar phase process via a completely different route, which we briefly summarize.
Before introducing our phase process, we first prove an asymptotic expansion of the form
\begin{equation}
    \label{eq:introexpansion}
    u(t,x) = u^*(x) + \sigma v_1(t,x) + \sigma^2 v_2(t,x) + \mathcal{O}(\sigma^3)
\end{equation}
(Theorem~\ref{thm:asym}).
This results in explicit representations of $v_1$ and $v_2$, as well as exact estimates relating to the validity of the expansion.
Since the PFNLS equation is not parabolic, we rely on dispersive estimates to control the nonlinear terms.
We also require Gaussian tail estimates on the remainder terms, for which we use a result by Seidler~\cite{seidler_exponential_2010} to estimate $L^p_{\Omega}$-norms of stochastic integrals with a constant which is $\mathcal{O}(\sqrt{p})$.

The next step is to introduce the following decomposition of $v_1$ and $v_2$:
\begin{subequations}
    \label{eq:introdecomp}
    \begin{align}
        v_1(t,x) &= w_1(t,x) + a_1(t) u^*_x(x), \\
        v_2(t,x) &= w_2(t,x) + a_2(t) u^*_x(x) + \tfrac{1}{2}a_1(t)^2 u^*_{xx}(x),
    \end{align}
\end{subequations}
where $w_1$ and $w_2$ should be regarded as being determined by \eqref{eq:introdecomp} for a given choice of $a_1$ and $a_2$.
We show that there are unique choices of $a_1$ and $a_2$ such that the linear parts of $w_1$ and $w_2$ are mean reverting,
and we take these to be our definition of the first- and second-order components of the phase (see Section~\ref{sec:orbital} and Theorem~\ref{thm:relaxation}).
This allows us to use deterministic linear stability results to show that $w_1$ does not show any growth in time, and $w_2$ grows at a slower rate than $v_2$.
Directly combining the asymptotic expansion \eqref{eq:introexpansion} with the decomposition \eqref{eq:introdecomp} using a Taylor expansion finally results in
\begin{align*}
    u(t,x) &= u^*(x + \sigma a_1(t) +\sigma^2 a_2(t)) + \sigma w_1(t,x)+\sigma^2 w_2(t,x) + \mathcal{O}(\sigma^3),
\end{align*}
which, combined with smallness of $w_1$ and $w_2$, shows orbital stability on a timescale for which the asymptotic expansion \eqref{eq:introexpansion} is valid.

Asserting stability on longer timescales requires additional effort.
The main issue is that \eqref{eq:introexpansion} is a linearization around $u^*$, but after time $t$ the solution is close to the translated wave $u^*(x+a(t))$.
Thus, when $a(t)$ gets large enough (which happens on a timescale $T \sim \sigma^{-2}$), the linearization becomes completely inaccurate.
We remedy this by resetting the linearization after a fixed time $T$, by linearizing around the shifted wave $u^*(x+a(T))$ instead.
This makes it possible to combine the short-term estimates on each time interval $[NT,(N+1)T]$ to obtain long-term stability (Corollary~\ref{cor:longterm}).
The cost of this procedure is that we incur a discontinuity in the phase process each time we reset, and our explicit representation is only valid in between resetting.
We are not aware of any methods to obtain \emph{explicit} descriptions of the phase which are accurate on long timescales.
Surprisingly, the resetting procedure suggests that it is possible to show stability on long timescales without accurately tracking the phase on short timescales.
This is something we aim to investigate in future work.

\subsection{Outline}
In Section~\ref{sec:prelim} we specify our notation and introduce the preliminaries necessary to state and prove the main results (Theorems~\ref{thm:wellposed},~\ref{thm:asym},~\ref{thm:relaxation}, and Propostion~\ref{prop:longterm}), which are contained in Section~\ref{sec:mainresult}.
The proof of well-posedness of \eqref{eq:spfnls} is given in Section~\ref{sec:proofwellposed}, followed by the proof of the stability results in Section~\ref{sec:proofstability}.
Appendices~\ref{app:hs} and~\ref{app:stochstrich} contain some auxiliary results needed for the proofs.

\section{Preliminaries}
\label{sec:prelim}
We now give the preliminaries required to state and prove the main results, as well as some notational shorthands.
We give a rigorous meaning to \eqref{eq:spfnls}, and formulate the Strichartz estimates which are used to show well-posedness.
Afterwards we state the deterministic stability of the solitary wave, along with additional Strichartz estimates related to the linearization around the solitary wave, which are needed for our stochastic stability results.
\subsection{Notation and conventions}
We denote the norm of general normed spaces $X$ by $\lVert \cdot \rVert_X$, and the inner product of general inner product spaces $H$ by $\langle \cdot, \cdot \rangle_{H}$.
In the case where $H$ is complex, we take the inner product to be conjugate-linear in the second variable.
The space of bounded linear operators from a Banach space $X$ to a Banach space $Y$ is denoted by $\mathcal{L}(X;Y)$, and the space of Hilbert--Schmidt operators between separable Hilbert spaces $H$ and $\tilde{H}$ as $\mathcal{L}_2(H;\tilde{H})$.
If a mapping $F$ between two Banach spaces $X$ and $Y$ is $n$ times Fr\'echet differentiable at a point $x_0\in X$, then we denote its Fr\'echet derivative at $x_0$ by $(h_1, \ldots, h_n) \mapsto \d F(x_0)[h_1,\dots,h_n]$.

If $X$ is a Banach space, we will write $C([0,T];X)$ for the space of continuous $X$-valued functions.
For $p \in [1,\infty]$, we write $L^p(S;X)$ for the usual Bochner spaces defined on a measure space $(S,\mathcal{F},\mu)$ (which coincide with the Lebesgue spaces if $X = \C$ or $X = \R$).
If $p = 2$, and $H$ is a Hilbert space, then $L^2(S;H)$ is a Hilbert space with the inner product given by
\begin{align*}
    \langle f, g \rangle_{L^2(S;X)} = \int_{S} \langle f, g \rangle_H \d \mu.
\end{align*}
For $z \in \C$, we write $\overline{z}$ for its complex conjugate.
For $p\in[1,\infty]$, we write $p^\prime$ for its H\"older conjugate, which is the unique $p' \in [1,\infty]$ such that $\frac{1}{p}+\frac{1}{p'}=1$.
Throughout the paper, all random variables will be defined on a complete probability space $(\Omega,\mathcal{F}, \mathbb{P})$ equipped with a complete and right-continuous filtration $\mathbb{F} = (\mathcal{F}_t)_{t \in [0,\infty)}$.
We will make use of the following abbreviations:
\begin{align*}
    L^p_x &\coloneqq L^p(\R;\C), \\
    L^p_{\Omega}(X) &\coloneqq L^p(\Omega;X), \\
    L^p(T,T';X) &\coloneqq L^p([T,T'];X),
\end{align*}
where $\R$ and $[T,T']$ are equipped with the usual Lebesgue measure.

The weak derivative of a weakly differentiable function $f\in L^p_x$ is denoted by $\partial_x f$ and we write $\Delta=\partial_x^2$ for the Laplacian on the real line.
We write $u^*_x$ and $u^*_{xx}$ for the first and second spatial derivatives of $u^*$.
For $s \in [0,\infty)$ and $p \in (1,\infty)$, the Bessel space $H^{s,p}_x$ consists of the functions $f \in L^p_x$ for which the quantity
\begin{align*}
        \lVert f \rVert_{H^{s,p}_x} = \lVert (1-\Delta)^{\frac{s}{2}}f \rVert_{L^p_x}
\end{align*}
is finite.
Here, the fractional power $(1-\Delta)^{\frac{s}{2}}$ is defined using the Fourier multiplier with symbol $\xi \mapsto (1+\lvert \xi \rvert^2)^{\frac{s}{2}}$.
The space $H^{s,p}_x$ is a Banach space and we have continuous embeddings $H^{s_1,p}_x \hookrightarrow H^{s_2,p}_x$ if $s_1 \geq s_2$.
When $k$ is a nonnegative integer, the Bessel space $H^{k,p}_x$ is isomorphic to the classical Sobolev space $W^{k,p}_x$, which consists of the function in $L^p_x$ for which all partial derivatives of order $k$ or less are also in $L^p_x$.
Proofs of these statements rely on the theory of singular integrals, and can for example be found in~\cite[Chapter 3]{stein_singular_1970}.
We also note that $H^{s,2}_x$ is a Hilbert space with inner product $\langle f, g \rangle_{H^{s,2}_x} = \langle (1-\Delta)^{\frac{s}{2}}f,(1- \Delta)^{\frac{s}{2}}g \rangle_{L^2_x}$.
In this case we will write $H^s_x \coloneqq H^{s,2}_x$.

Lastly, we denote by $\{S(t)\}_{t\in \R}$ the $C_0$-group on $L^2_x$ generated by ${i \Delta: L^2_x \supset H^2_x \to L^2_x}$, which acts at $t\in \R$ as the Fourier multiplier with symbol $\xi \mapsto e^{-4\pi^2 i|\xi|^2 t}$.
Using Plancherel's theorem, it can be seen that $S(t)$ is unitary on $L^2_x$.
Since the Fourier multiplier of $S(t)$ commutes with that of $(1-\Delta)^{\frac{s}{2}}$, it is immediate that $S(t)$ is also a unitary group on $H^{s}_x$ for any $s$.
\subsection{Stochastic set-up}
We let $W(t)$ be an $L^2(\R;\R)$-cylindrical Wiener process on $\Omega$, which is adapted to $\mathbb{F}$.
Then $W(t)$ has an interpretation as the time integral from $0$ to $t$ over a space-time white noise.
To regularize the noise, fix some $\phi \in L^2(\R;\R)$ and define $\Phi \colon L^2(\R;\R) \rightarrow L^{\infty}_x$ and $\beta \in \R$ as
\begin{subequations}
\label{eq:defphi}
\begin{align}
\label{eq:defPhi}
    \Phi f &\coloneqq \phi * f, \\
\label{eq:defbeta}
    \beta &\coloneqq \lVert \phi \rVert_{L^2_x}.
\end{align}
\end{subequations}
We now convert \eqref{eq:spfnls} into an equivalent formulation in It\^o form.
Formally applying an It\^o--Stratonovich correction to \eqref{eq:spfnls} results in
\begin{equation}
    \label{eq:spfnlsitoformal}
    \d u = [i \Delta u - i \nu u - \epsilon (\gamma u - \mu \overline{u}) + i \kappa \lvert u \rvert^2 u]\d t 
    - \tfrac{1}{2}F u\d t 
    - i u \Phi \d W,
\end{equation}
with $F$ being defined as
\begin{equation}
    \label{eq:itocorrection}
    F \coloneqq \sum_{k \in \N} (\Phi e_k)^2,
\end{equation}
where $e_k$ is an orthonormal basis of $L^2(\R;\R)$.
Let us collect some facts about $\Phi$ and $F$ which will be used throughout.
The proof of Proposition~\ref{prop:hs} is contained in Appendix~\ref{app:hs}.
\begin{proposition}
\label{prop:hs}
Let $\phi \in L^2(\R;\R)$ and $u \in L^2_x$. Then the series in \eqref{eq:itocorrection} is well-defined and we have the equalities
\begin{subequations}
\begin{align}
        \label{eq:Fbeta}
        F &= \beta^2, \\
        \label{eq:hilbertschmidteq}
        \lVert u \Phi \rVert_{\mathcal{L}_2(L^2(\R;\R);L^2_x)} &= \beta \lVert u \rVert_{L^2_x}.
\end{align}
If additionally $\phi \in H^s(\R;\R)$ and $u \in H^s_x$ for some $s \in [0,\infty)$, then we have the estimate
\begin{align}
        \label{eq:hilbertschmidtest}
        \lVert u \Phi \rVert_{\mathcal{L}_2(L^2(\R;\R);H^s_x)} &\leq C_{s} \lVert \phi \rVert_{H^s_x} \lVert u \rVert_{H^s_x}
\end{align}
\end{subequations}
for some $C_s > 0$ which depends only $s$.
\end{proposition}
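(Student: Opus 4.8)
\emph{Proof proposal.} I would treat the three assertions in order. Identities \eqref{eq:Fbeta} and \eqref{eq:hilbertschmidteq} are essentially Parseval's identity for the basis $\{e_k\}$, while \eqref{eq:hilbertschmidtest} reduces, via the kernel representation of Hilbert--Schmidt operators, to a Fourier-side estimate that closes with Peetre's inequality. For \eqref{eq:Fbeta}: since $\phi,e_k\in L^2(\R;\R)$, each $\Phi e_k=\phi*e_k$ is a bounded continuous function and $(\Phi e_k)(x)=\langle \phi(x-\cdot),e_k\rangle_{L^2(\R;\R)}$ pointwise; applying Parseval to the vector $\phi(x-\cdot)$ gives $\sum_k(\Phi e_k)(x)^2=\lVert\phi(x-\cdot)\rVert_{L^2(\R;\R)}^2=\beta^2$ for every $x$, which in particular shows the series converges (pointwise, to a constant independent of the basis). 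For \eqref{eq:hilbertschmidteq}, expand $\lVert u\Phi\rVert_{\mathcal{L}_2(L^2(\R;\R);L^2_x)}^2=\sum_k\int_\R|u(x)|^2(\Phi e_k)(x)^2\,\d x$ and interchange sum and integral by Tonelli's theorem (all terms nonnegative) to obtain $\int_\R|u(x)|^2 F(x)\,\d x=\beta^2\lVert u\rVert_{L^2_x}^2$ using \eqref{eq:Fbeta}; boundedness of $u\Phi$ on the relevant spaces is immediate from $\lVert\Phi e_k\rVert_{L^\infty_x}\le\beta\lVert e_k\rVert_{L^2_x}$.

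For \eqref{eq:hilbertschmidtest}, the plan is to identify the operator $u\Phi\colon L^2(\R;\R)\to H^s_x$ with its integral kernel. Writing $(u\Phi f)(x)=\int_\R f(z)\,(u\cdot\tau_z\phi)(x)\,\d z$ with $\tau_z\phi=\phi(\cdot-z)$, under the canonical isometry $\mathcal{L}_2(L^2(\R;\R);H^s_x)\cong L^2(\R;H^s_x)$ the operator corresponds to the $H^s_x$-valued function $z\mapsto u\cdot\tau_z\phi$; consequently it suffices to compute the quantity $\int_\R\lVert u\cdot\tau_z\phi\rVert_{H^s_x}^2\,\d z$, which will turn out to be finite and equal to $\lVert u\Phi\rVert_{\mathcal{L}_2(L^2(\R;\R);H^s_x)}^2$. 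Expanding the $H^s_x$-norm via the Fourier transform, using the pointwise identity $\widehat{u\cdot\tau_z\phi}(\xi)=\int_\R\hat u(\xi-\eta)\,e^{-2\pi i z\eta}\,\hat\phi(\eta)\,\d\eta$ (valid since $\hat u,\hat\phi\in L^2$, so the integrand is in $L^1_\eta$), interchanging the $z$- and $\xi$-integrals by Tonelli, and applying Plancherel in the $z$-variable to the inner integral, one arrives at
\begin{align*}
    \lVert u\Phi\rVert_{\mathcal{L}_2(L^2(\R;\R);H^s_x)}^2
    &= \int_\R\int_\R (1+|\xi|^2)^s\,|\hat u(\xi-\eta)|^2\,|\hat\phi(\eta)|^2\,\d\eta\,\d\xi \\
    &= \int_\R\int_\R (1+|\zeta+\eta|^2)^s\,|\hat u(\zeta)|^2\,|\hat\phi(\eta)|^2\,\d\zeta\,\d\eta
\end{align*}
after the substitution $\zeta=\xi-\eta$. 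Finally, Peetre's inequality $(1+|\zeta+\eta|^2)^s\le 2^s(1+|\zeta|^2)^s(1+|\eta|^2)^s$ (valid for $s\ge 0$) splits the double integral into a product and yields the claim, e.g.\ with $C_s=2^{s/2}$.

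The steps requiring care rather than genuine difficulty are: (i) the identification $\mathcal{L}_2(L^2(\R;\R);H^s_x)\cong L^2(\R;H^s_x)$ together with the verification that $z\mapsto u\cdot\tau_z\phi$ is indeed the kernel of $u\Phi$ (this operator agrees with $u\Phi$ already as a map into $L^2_x\supseteq H^s_x$); and (ii) the use of Plancherel for $L^1$-functions $\eta\mapsto\hat u(\xi-\eta)\hat\phi(\eta)$ whose Fourier transforms need not be square-integrable a priori, so that the identities above should first be read as equalities in $[0,\infty]$, with finiteness deduced only at the last step from Peetre's inequality. Neither is a real obstacle, but both must be phrased carefully to avoid a circular argument, and I expect this bookkeeping to be the main thing to get right.

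A softer alternative for \eqref{eq:hilbertschmidtest} that avoids (ii) is to prove the estimate by hand for $s=0$ (which is exactly \eqref{eq:hilbertschmidteq}) and for every nonnegative integer $s$ (via the Leibniz rule combined with the identity $\int_\R\lVert u\cdot\tau_z g\rVert_{L^2_x}^2\,\d z=\lVert u\rVert_{L^2_x}^2\lVert g\rVert_{L^2_x}^2$), and then to interpolate bilinearly in the pair $(u,\phi)$, using $[H^{s_0}_x,H^{s_1}_x]_\theta=H^s_x$ for $s=(1-\theta)s_0+\theta s_1$ and the corresponding interpolation identity for Hilbert--Schmidt (equivalently, Hilbert-space tensor product) spaces; taking $s_1$ an arbitrarily large integer then covers all $s\in[0,\infty)$.
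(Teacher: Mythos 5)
Your proofs of \eqref{eq:Fbeta} and \eqref{eq:hilbertschmidteq} coincide with the paper's (Parseval applied to $\phi(\cdot-x)$, then Tonelli/Fubini), but for \eqref{eq:hilbertschmidtest} you take a genuinely different and correct route. The paper first proves the estimate for $s=2n$ by induction, commuting $(1-\Delta)$ through the product $u\,\Phi f$ (a Leibniz-type identity producing $\Phi_{\partial_x\phi}$ and $\Phi_{\Delta\phi}$), and then obtains general $s\in[0,\infty)$ by bilinear complex interpolation, invoking~\cite[Theorem 4.4.1]{bergh_interpolation_1976} together with the interpolation identity $[\mathcal{L}_2(L^2_x,L^2_x),\mathcal{L}_2(L^2_x,H^{2n}_x)]_{\theta}=\mathcal{L}_2(L^2_x,H^s_x)$ from~\cite[Theorem 9.1.25]{hytonen_analysis_2016}; your secondary ``softer alternative'' is essentially this argument. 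Your primary argument instead identifies $u\Phi$ with its kernel $z\mapsto u\cdot\tau_z\phi$ under $\mathcal{L}_2(L^2(\R;\R);H^s_x)\cong L^2(\R;H^s_x)$, computes $\int_\R\lVert u\cdot\tau_z\phi\rVert_{H^s_x}^2\,\d z$ on the Fourier side via Plancherel in $z$, and closes with Peetre's inequality; the two caveats you flag (reading the identities in $[0,\infty]$ until finiteness is established, and transferring the kernel identification from $L^2_x$ to $H^s_x$ via the embedding $H^s_x\hookrightarrow L^2_x$) are exactly the right points to handle and are handled correctly. What your route buys is elementarity and an explicit constant ($C_s=2^{s/2}$, uniform treatment of all $s\ge 0$ at once), avoiding both the interpolation machinery and the $\gamma$-radonifying interpolation result; what the paper's route buys is independence from exact Fourier-kernel computations, so it transfers more readily to situations where such a closed-form kernel calculation is unavailable. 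Both are complete proofs of the stated estimate.
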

Substituting \eqref{eq:Fbeta} into \eqref{eq:spfnlsitoformal}, the stochastic PFNLS equation in It\^o form reads
\begin{equation}
    \label{eq:spfnlsito}
    \d u = [i \Delta u - i \nu u - \epsilon (\gamma u - \mu \overline{u}) + i \kappa \lvert u \rvert^2 u]\d t - \tfrac{1}{2} \beta^2 u\d t - i u \Phi \d W.
\end{equation}
From the definition of $\Phi$ \eqref{eq:defPhi}, it is clear that this operator commutes with translation.
Furthermore, since $\xi \coloneqq \d W$ formally represents a white noise, its statistics are also invariant under translation.
Thus, the noise terms do not break the temporal- and spatial translation symmetries inherent to \eqref{eq:pfnls} (in a statistical sense).

Before we proceed with the mathematical analysis, we give a meaningful interpretation to our noise.
Since $\xi$ formally has a covariance operator on $L^2(\R;\R)$ equal to the identity, it can be seen using \eqref{eq:defPhi} that $\Phi \xi$ formally satisfies the covariance relation
\begin{align*}
        \mathbb{E}\Bigl[(\Phi \xi)(t,x) \cdot (\Phi\xi)(t',x') \Bigr]
        &=\mathbb{E}\Bigl[\langle \Phi \xi(t), \delta_{x} \rangle_{L^2_x} \langle \Phi \xi(t'), \delta_{x'} \rangle_{L^2_x} \Bigr] \\
        &= \delta_0(t - t')\langle \Phi^* \delta_x, \Phi^*\delta_{x'}\rangle_{L^2_x} \\
        &= \delta_0(t - t')(\tilde{\phi}*\phi)(x - x'),
\end{align*}
where $\delta_a$ denotes a Dirac mass at the point $x = a$, and  $\tilde{\phi}$ denotes the reflection of $\phi$ around the origin.
Therefore, $g \coloneqq \tilde{\phi}*\phi$ can be interpreted as the spatial correlation function of our noise.
Note that $g$ is an even function, so that the correlation only depends on $|x - x'|$.
The variance at any point is given by $g(0) = \beta^2$, which means this quantity can be viewed as the strength of the noise.

\subsection{Strichartz estimates}

In the analysis of nonlinear Schr\"odinger equations, the dispersion displayed by the linear Schr\"odinger equation plays a major role.
In our context, this dispersion manifests in the form of \emph{Strichartz estimates}.
These estimates give control over certain space-time mixed Lebesgue norms of solutions to the linear Schr\"odinger equation.
In our one-dimensional setting, they take the following form.
\begin{definition}
\label{def:admissible}
A pair $(r,p)$ with $r \in [4,\infty]$, $p \in [2,\infty]$ is called \emph{admissible} if it satisfies
\begin{align}
    \label{eq:defadmissible}
    \frac{2}{r} + \frac{1}{p} = \frac{1}{2}.
\end{align}
\end{definition}
\begin{theorem}[Strichartz estimates]
\label{thm:strichartz}
Let $s \in [0,\infty)$, and let $(r,p) \neq (4,\infty)$ and $(\alpha,\delta)$ be admissible.
There exists a constant $C$, such that the estimates
\begin{subequations}
\label{eq:strich}
\begin{align}
    \label{eq:strichhom}
    \lVert S(\cdot)f \rVert_{L^r(0,T;H^{s,p}_x)} &\leq C \lVert f \rVert_{H^{s}_x}, \\
    \label{eq:strichconv}
    \Bigl\lVert \int_0^{\cdot}S(\cdot - t')g(t')\d t' \Bigr\rVert_{L^{r}(0,T;H^{s,p}_x)} &\leq C \lVert g \rVert_{L^{\alpha'}(0,T;H^{s,\delta'}_x)}, \\
    \label{eq:strichstoch}
    \Bigl\lVert \int_0^{\cdot}S(\cdot - t') h(t') \Phi \d W(t') \Bigr\rVert_{L^q_{\Omega}(L^r(0,T;H^{s,p}_x))} &\leq C \sqrt{q} \lVert \phi \rVert_{H^{s}_x} \lVert h \rVert_{L^q_{\Omega}(L^2(0,T;H^{s}_x))},
\end{align}
hold for every $q \in [2,\infty)$, $T \in (0,\infty]$, $f \in H^{s}_x$, $g \in L^{\alpha'}(0,T;H^{s,\delta'}_x)$, $h \in L^q_{\Omega}(L^2(0,T;H^{s}_x))$, and $\phi \in L^2(\R;\R) \cap H^{s}_x$ (recall \eqref{eq:defphi}).
\end{subequations}
\end{theorem}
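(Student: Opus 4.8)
The plan is to derive all three estimates in \eqref{eq:strich} from the classical (scalar-valued, $s=0$) Strichartz estimates for the one-dimensional Schr\"odinger group by first reducing to the case $s = 0$ and then handling the stochastic estimate via a factorization/Burkholder argument. First I would observe that since the Fourier multiplier defining $(1-\Delta)^{s/2}$ commutes with the multiplier of $S(t)$, applying $(1-\Delta)^{s/2}$ to any of the expressions on the left-hand sides of \eqref{eq:strich} simply moves the operator inside, and by definition of the $H^{s}_x$ and $H^{s,p}_x$ norms this reduces \eqref{eq:strichhom} and \eqref{eq:strichconv} for general $s$ to the same statements with $s = 0$ (for $S(\cdot)(1-\Delta)^{s/2}f$, etc.). The $s = 0$ versions of \eqref{eq:strichhom} and \eqref{eq:strichconv} are exactly the standard homogeneous and inhomogeneous (dual, or Christ--Kiselev) Strichartz estimates on $\R$; I would cite a standard reference (e.g.\ Keel--Tao, or the monograph of Cazenave/Linares--Ponce) for these, noting that the endpoint $(r,p)=(4,\infty)$ is excluded precisely because the Christ--Kiselev lemma fails there, which is why the hypothesis $(r,p)\neq(4,\infty)$ appears.

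For the stochastic convolution estimate \eqref{eq:strichstoch}, the strategy is the stochastic Strichartz estimate of the type proved by Brze\'zniak--Millet and Hornung; since the paper defers auxiliary material to Appendix~\ref{app:stochstrich}, I would prove it there. The key steps are: (i) use the factorization method — write $S(t-t') = c_\alpha \int_{t'}^{t} (t - \tau)^{\alpha-1}(\tau - t')^{-\alpha} S(t-\tau)S(\tau - t')\,\d\tau$ for a suitable $\alpha \in (0,1/2)$ — to replace the stochastic integral by a deterministic (inhomogeneous Strichartz) operator applied to the process $\tau \mapsto \int_0^\tau (\tau - t')^{-\alpha} S(\tau - t') h(t')\Phi\,\d W(t')$; (ii) apply \eqref{eq:strichconv} (with $s=0$, already established) to pull the $L^r_t L^p_x$ norm inside, at the cost of an $L^{\alpha'}_\tau$-in-time, $L^{\delta'}_x$-in-space norm of that process; (iii) apply the Burkholder--Davis--Gundy inequality in the Hilbert space $L^2_x$ (or $H^{s,\delta'}_x$), together with the Hilbert--Schmidt bound \eqref{eq:hilbertschmidteq}/\eqref{eq:hilbertschmidtest} from Proposition~\ref{prop:hs}, to bound the $L^q_\Omega$-norm of the stochastic integral by $\|\phi\|_{H^s_x}\|h\|_{L^2(0,\tau;H^s_x)}$ times the BDG constant; (iv) track that the BDG constant in $L^q_\Omega$ grows like $\sqrt{q}$ — this is the point where one invokes the sharp-constant martingale inequality (the paper already mentions using Seidler's result for exactly this $\mathcal{O}(\sqrt q)$ behavior), rather than the cruder constant one gets from iterating BDG. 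Reducing the general-$s$ case to $s=0$ again uses commutation of the multipliers, now combined with \eqref{eq:hilbertschmidtest} to absorb the extra regularity of $\phi$.

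The main obstacle I expect is the stochastic estimate \eqref{eq:strichstoch}, specifically getting the constant to be genuinely $C\sqrt q$ uniformly in $q \in [2,\infty)$ and in $T \in (0,\infty]$ rather than just finite for each fixed $q$. The $\sqrt q$ growth is essential for the later Gaussian tail estimates on the remainder terms in the asymptotic expansion, so one cannot be cavalier here: naively combining BDG with Minkowski's integral inequality and the factorization can introduce a $q$-dependence that is polynomial of higher degree, or constants blowing up as $T\to\infty$. The fix is to choose the factorization exponent $\alpha$ strictly between $1/q$ and $1/2$ (so the time-weights are integrable for the relevant exponents), to apply the sharp martingale moment inequality with constant $\lesssim \sqrt q$ pointwise in $\tau$ before integrating in $\tau$, and to use that the inhomogeneous Strichartz constant in \eqref{eq:strichconv} is $T$-independent. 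The case $T = \infty$ then follows by monotone convergence. Everything else — the reduction in $s$, the exclusion of the endpoint, the passage between Stratonovich and It\^o forms — is routine once the scalar and stochastic Strichartz inputs are in place.
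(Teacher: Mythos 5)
Your handling of \eqref{eq:strichhom} and \eqref{eq:strichconv} (commuting $(1-\Delta)^{s/2}$ with $S(t)$ and citing the classical estimates) is exactly what the paper does. One side remark is off: the exclusion of $(4,\infty)$ is \emph{not} because the deterministic estimates fail there — the paper's remark explicitly notes that \eqref{eq:strichhom} and \eqref{eq:strichconv} still hold at $(4,\infty)$ — but because the stochastic estimate needs $H^{s,p}_x$ (and hence $L^r(0,T;H^{s,p}_x)$) to be a $2$-smooth Banach space, which fails for $p=\infty$.

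The real issue is \eqref{eq:strichstoch}. Your factorization route is genuinely different from the paper's, and as described it has a gap that I do not think your proposed fixes close. After factorizing you face $\int_0^t (t-\tau)^{\alpha-1}S(t-\tau)Y(\tau)\,\d\tau$ with $Y(\tau)=\int_0^\tau(\tau-t')^{-\alpha}S(\tau-t')h(t')\Phi\,\d W(t')$; this is \emph{not} of the form $\int_0^t S(t-\tau)g(\tau)\,\d\tau$ (the singular kernel depends on $t-\tau$), so \eqref{eq:strichconv} does not apply directly and you are forced into a H\"older/Young step in time with some exponent $m$ for $Y$. That step requires $\lVert u^{\alpha-1}\rVert_{L^{m'}(0,T)}<\infty$, i.e.\ $\alpha>1/m$, and produces a factor growing like $T^{\alpha-1/m}$, which already destroys uniformity over $T\in(0,\infty]$. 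Worse, the sharp martingale bound for $Y(\tau)$ involves $\bigl(\int_0^\tau(\tau-t')^{-2\alpha}\lVert h(t')\rVert_{H^s_x}^2\,\d t'\bigr)^{1/2}$, and since the theorem only assumes $h\in L^2(0,T;H^s_x)$, controlling its $L^m_\tau$-norm by $\lVert h\rVert_{L^2(0,T;H^s_x)}$ via Young forces $\alpha<1/m$ — in direct conflict with $\alpha>1/m$. This is why factorization-based stochastic Strichartz estimates (de Bouard--Debussche style, and in spirit the bound \eqref{eq:PPstrichstoch} of Proposition~\ref{prop:strichartzPP}) come with stronger time-integrability of $h$ and $T$-dependent constants; choosing $\alpha\in(1/q,1/2)$ and invoking the $T$-independence of the deterministic Strichartz constant does not resolve either problem. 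The paper's Appendix~\ref{app:stochstrich} avoids all of this: for $p>2$ it rewrites the stochastic convolution as a \emph{single} stochastic integral $\int_0^T\Psi(t')h(t')\Phi\,\d W(t')$ with values in the $2$-smooth space $L^r(0,T;H^{s,p}_x)$, where $\Psi(t')\psi=\mathbbm{1}_{[t',T]}(\cdot)S(\cdot-t')\psi$ has operator norm bounded, uniformly in $t'$ and $T$, by the homogeneous Strichartz constant; Seidler's maximal inequality applied in that space then yields the constant $C\sqrt q$, the $L^2$-in-time norm of $h$, and $T$-uniformity in one stroke (combined with \eqref{eq:hilbertschmidtest}), while the case $(r,p)=(\infty,2)$ follows from unitarity of $S$ plus the same maximal inequality. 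If you want to keep a factorization argument you would have to substantially restructure it (and accept weaker norms on $h$ or $T$-dependence), so as written your proof of \eqref{eq:strichstoch} does not establish the stated estimate.
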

\begin{remark}
In the case $(r,p) = (\infty,2)$, the relevant processes in Theorem~\ref{thm:strichartz} have continuous versions, and the $L^{\infty}$-norm on the left-hand side of \eqref{eq:strich} can be replaced by $C([0,T])$. We will always use these continuous versions.
This also applies to \eqref{eq:PPstrich} further below.
\end{remark}
\begin{remark}
Estimates \eqref{eq:strichhom} and \eqref{eq:strichconv} still hold in the case $(r,p) = (4,\infty)$.
This also applies to \eqref{eq:PPstrichhom}, \eqref{eq:PPstrichconv}, \eqref{eq:PP0strichhom}, and \eqref{eq:PP0strichconv} further below.
\end{remark}
Estimates \eqref{eq:strichhom}, \eqref{eq:strichconv}, and \eqref{eq:strichstoch} are commonly referred to as the \emph{homogeneous}, \emph{convolution}, and \emph{stochastic} Strichartz estimates respectively.
The homogeneous and convolution Strichartz estimates are well-known and can be found for example in~\cite[Theorem 2.3.3]{cazenave_semilinear_2003} or~\cite{keel_endpoint_1998}.
The stochastic Strichartz estimate is more recent, and was first shown in~\cite{brzezniak_stochastic_2014} for the case $r = q$.
The proof of our formulation of \eqref{eq:strichstoch}, which is contained in Appendix~\ref{app:stochstrich}, follows the same idea as~\cite{brzezniak_stochastic_2014}, except that we use~\cite[Theorem 1.1]{seidler_exponential_2010} to obtain a constant which is $\mathcal{O}(\sqrt{q})$.

\subsection{Solitary waves and linear stability}
\label{sec:detstab}
We now fix a set of parameters $\nu \in \R$, $\epsilon,\gamma,\mu > 0$ which satisfy $\mu > \gamma$.
We additionally fix $\theta \in [0,2 \pi)$ such that $\cos(2\theta) = \frac{\gamma}{\mu}$ and $\sin(2\theta) > 0$.
This ensures that the deterministic equation \eqref{eq:pfnls} has a stable solitary wave solution $u^*$, explicitly given by
\begin{equation}
    \label{eq:ustar}
    u^*(x) = \sqrt{\frac{2 (\nu + \epsilon \mu \sin(2 \theta))}{\kappa}}\text{sech}(\sqrt{\nu + \epsilon \mu \sin(2\theta)}x)e^{i \theta}
\end{equation}
(see~\cite[equation (1.8)]{kapitula_stability_1998}).
We remark that $u^*$ is infinitely often differentiable, and all of its derivatives are rapidly decaying.

We will frequently make use of expansions around the solitary wave $u^*$.
Due to the cubic term in \eqref{eq:spfnls}, this will require expansions of terms like $\lvert a + b \rvert^2 (a + b)$.
Here, the absolute values prevent the use of convenient multinomial expansion formulas.
To remedy this, we introduce the following notation, which we call the \emph{triple bracket}:
\begin{equation}
\label{eq:triple}
\begin{aligned}
    \{\cdot,\cdot,\cdot\} &\colon \mathbb{C} \times \mathbb{C} \times \mathbb{C} \rightarrow \mathbb{C} \\
    \{a,b,c\} &= a b \overline{c} + a \overline{b} c + \overline{a} b c.
\end{aligned}
\end{equation}
Observe that the triple bracket is symmetric, (real-)trilinear and that $\lvert u \rvert^2 u = \tfrac{1}{3}\{u,u,u\}$.
Therefore, we can compactly write binomial expansions like
\begin{align*}
    \lvert u+v \rvert^2 (u+v) = \tfrac{1}{3}\{u + v, u + v, u + v\} = \tfrac{1}{3}\{u,u,u\} + \{u,u,v\} + \{u,v,v\} + \tfrac{1}{3}\{v,v,v\}.
\end{align*}
This notation is particularly useful when using multinomial expansions with more terms.
For readability, we abbreviate
\begin{equation}
    \label{eq:defLstraight}
    L u \coloneqq - i \nu u - \epsilon (\gamma u - \mu \overline{u}).
\end{equation}
Combining our new notation, we may compactly rewrite \eqref{eq:pfnls} as
\begin{align*}
    \partial_t u = i \Delta u + L u + \tfrac{1}{3} i \kappa  \{u,u,u\}.
\end{align*}
Using the additivity of the triple bracket, it is now straightforward to see that the operator
\begin{align}
    \label{eq:defL} 
    \mathcal{L} \colon v &\mapsto i \Delta v + L v + i \kappa \{u^*, u^*, v \}  
\end{align}
corresponds to the linearization of \eqref{eq:pfnls} around the solitary wave $u^*$.
The linear stability of the solitary wave \eqref{eq:ustar} is captured in the following theorem, which has been shown in~\cite{kapitula_stability_1998}.
\begin{theorem}
    \label{thm:Lprops}
    The operator $\mathcal{L}$ has the following properties:
    \begin{enumerate}
    \item $\mathcal{L}$ is the generator of a strongly continuous semigroup on $L^2_x$, denoted by $P(t)$.
    \item $u^*_x$ is an eigenfunction of $\mathcal{L}$ with eigenvalue $0$, which has algebraic multiplicity one.
    \item The spectrum of $\mathcal{L}$ on $L^2_x$ is contained in $\{ z \in \C : \Real (z) \leq -b \} \cup \{0\}$ for some $b > 0$. Thus, the Riesz spectral projection
    \begin{equation*}
        \Pi^0 \coloneqq \frac{1}{2\pi i}\oint_C (\lambda I - \mathcal{L})^{-1}d\lambda,
    \end{equation*}
    is well-defined if $C$ is a sufficiently small contour encircling $0$ counterclockwise.
    \item If we additionally define $\Pi \coloneqq I - \Pi^0$, then there exist constants $M$ and $a > 0$ such that
    \begin{equation}
        \label{eq:Pdecay}
        \lVert P(t) \Pi \rVert_{\mathcal{L}(L^2_x)} \leq M e^{-at}
    \end{equation}
    holds for all $t \in [0,\infty)$.
    \end{enumerate}
\end{theorem}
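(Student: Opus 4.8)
The plan is to establish items (1)--(4) in turn, the delicate spectral assertions being reductions to the analysis of Kapitula and Sandstede~\cite{kapitula_stability_1998}. For (1): on the domain $H^2_x$ the operator $i\Delta$ generates the unitary group $\{S(t)\}_{t \in \R}$; the operator $L$ of \eqref{eq:defLstraight} is bounded and (real-)linear on $L^2_x$; and $v \mapsto i\kappa\{u^*,u^*,v\} = i\kappa\bigl(2\lvert u^*\rvert^2 v + (u^*)^2\overline{v}\bigr)$ is bounded on $L^2_x$ since $u^* \in L^\infty_x$. Hence $\mathcal{L}$ is a bounded (real-)linear perturbation of the $C_0$-group generator $i\Delta + L$, and the bounded perturbation theorem --- applied with $L^2_x$ regarded as a real Hilbert space, since $L$ and the triple bracket are only real-linear --- shows that $\mathcal{L}$ with domain $H^2_x$ generates a $C_0$-semigroup $P(t)$.

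For (2): the time-independent solution $u^*$ satisfies the profile equation $i\Delta u^* + L u^* + \tfrac{1}{3}i\kappa\{u^*,u^*,u^*\} = 0$, and differentiating this in $x$ --- using that $i\Delta$ and $L$ commute with $\partial_x$ and that the triple bracket is symmetric and trilinear --- gives $\mathcal{L}u^*_x = 0$, with $u^*_x \in H^2_x \setminus \{0\}$ because $u^*$ is a nonconstant rapidly decaying function. The claim that the zero eigenvalue has algebraic multiplicity one is the point where I would genuinely invoke~\cite{kapitula_stability_1998}: there it is shown, via the Evans function (equivalently, via the exponential dichotomies of the spatial-dynamics formulation together with a Fredholm solvability argument), that $\ker\mathcal{L}$ is spanned by $u^*_x$ --- the gauge symmetry $u \mapsto e^{i\varphi}u$ of the cubic NLS, which would contribute a second kernel element $iu^*$, is destroyed by the $\epsilon$-terms in \eqref{eq:defLstraight} --- and that $\mathcal{L}\psi = u^*_x$ has no $L^2_x$ solution, so there is no Jordan block at $0$.

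For (3) and (4): I would first pin down the essential spectrum. Since $\lvert u^*\rvert^2$ and $(u^*)^2$ decay rapidly as $\lvert x\rvert \to \infty$, the operator $\mathcal{L}$ differs from the constant-coefficient operator $\mathcal{L}_\infty \coloneqq i\Delta + L$ by a relatively compact perturbation (multiplication by a rapidly decaying function composed with the resolvent of $i\Delta$ is compact on $L^2_x$, by the Rellich criterion), so Weyl's theorem gives $\sigma_{\mathrm{ess}}(\mathcal{L}) = \sigma_{\mathrm{ess}}(\mathcal{L}_\infty) = \sigma(\mathcal{L}_\infty)$. Writing $v = v_1 + iv_2$ and passing to Fourier variables turns $\mathcal{L}_\infty$ into a family of $2\times 2$ real matrices with trace $-2\epsilon\gamma$ and eigenvalues $-\epsilon\gamma \pm \sqrt{\epsilon^2\mu^2 - (k+\nu)^2}$, where $k \in [0,\infty)$ is the symbol of $-\Delta$; under the standing assumptions on the parameters one reads off $\sigma(\mathcal{L}_\infty) \subseteq \{\Real z \le -b_0\}$ for some $b_0 > 0$ of order $\epsilon$ (cf.~\cite{alexander,kapitula_stability_1998}). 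Thus $\mathcal{L}$ has at most finitely many eigenvalues, each of finite algebraic multiplicity, in $\{\Real z > -b_0\}$, and the central result of~\cite{kapitula_stability_1998} is that for a suitable $b \in (0,b_0)$ the only such eigenvalue with $\Real z \ge -b$ is $z = 0$; this is (3), and since $0$ is then isolated in $\sigma(\mathcal{L})$ with one-dimensional generalized eigenspace (by (2)), the Riesz projection $\Pi^0$ is well-defined. For (4), set $\Pi = I - \Pi^0$; then $\mathcal{L}$ leaves $\mathrm{ran}\,\Pi$ invariant with $\sigma(\mathcal{L}|_{\mathrm{ran}\,\Pi}) \subseteq \{\Real z \le -b\}$, and to upgrade this spectral bound to \eqref{eq:Pdecay} I would apply the Gearhart--Pr\"uss theorem on the (complexified) Hilbert space $\mathrm{ran}\,\Pi$; the only point to check is that $(\lambda I - \mathcal{L})^{-1}$ is uniformly bounded on a half-plane $\{\Real z \ge -b+\delta\}$, the bound for large $\lvert\lambda\rvert$ being inherited from the explicit resolvent estimate for $\mathcal{L}_\infty$ together with the compactness of $\mathcal{L} - \mathcal{L}_\infty$.

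The main obstacle --- and the reason the statement is quoted from~\cite{kapitula_stability_1998} rather than proved from scratch --- is everything beyond the elementary facts (the identity $\mathcal{L}u^*_x = 0$, the bounded-perturbation generation, and the essential-spectrum computation): namely the absence of any eigenvalue of $\mathcal{L}$ with $\Real z \ge 0$ other than $0$, the algebraic simplicity of $0$, and, to a lesser extent, the uniform resolvent bound needed for Gearhart--Pr\"uss. These rest on the Evans-function / exponential-dichotomy machinery developed in~\cite{kapitula_stability_1998}.
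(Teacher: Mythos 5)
The paper offers no proof of this theorem at all: it is stated as a result ``which has been shown in~\cite{kapitula_stability_1998}'', so the substantive content --- simplicity of the zero eigenvalue, absence of other spectrum in $\{\Real z > -b\}$, and the exponential decay \eqref{eq:Pdecay} --- is simply quoted. Your proposal is compatible with this: you defer exactly those points to~\cite{kapitula_stability_1998} and supply correct routine scaffolding for the rest (bounded-perturbation generation on $L^2_x$ viewed as a real Hilbert space, $\mathcal{L}u^*_x=0$ by differentiating the profile equation, and the essential-spectrum computation via Weyl plus the Fourier symbol, whose eigenvalues $-\epsilon\gamma\pm\sqrt{\epsilon^2\mu^2-(k+\nu)^2}$ you get right).

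The one step you state too casually is the upgrade of the spectral bound to \eqref{eq:Pdecay} via Gearhart--Pr\"uss. The uniform resolvent bound on a right half-plane is \emph{not} ``inherited from the compactness of $\mathcal{L}-\mathcal{L}_\infty$'': compactness of the potential term controls the essential spectrum but says nothing uniform in $\lambda$, and the free resolvent $(\lambda-\mathcal{L}_\infty)^{-1}$ does not tend to zero in operator norm as $\lvert \Im\lambda\rvert\to\infty$ inside the half-plane (there are always Fourier modes $k$ with $k+\nu\approx\lvert\Im\lambda\rvert$ sitting at bounded distance from $\lambda$), so one cannot conclude smallness of $(\lambda-\mathcal{L}_\infty)^{-1}B$ by a Neumann-series argument. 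To rule out eigenvalues escaping to infinity near the imaginary axis and to get the uniform bound, you need a genuine high-frequency estimate exploiting the spatial decay of $u^*$ (a weighted/limiting-absorption-type bound for the resolvent composed with multiplication by the decaying coefficients), or you can avoid the issue entirely by citing the exponential linear stability statement of~\cite{kapitula_stability_1998} for item (4) as well, which is what the paper does. A second, smaller caveat: your reading of $\sigma(\mathcal{L}_\infty)\subseteq\{\Real z\le-b_0\}$ requires $(k+\nu)^2>\epsilon^2(\mu^2-\gamma^2)$ for all $k\ge 0$, i.e.\ a lower bound on $\nu$ relative to $\epsilon\mu\sin(2\theta)$; this condition is implicit in~\cite{kapitula_stability_1998} but is not spelled out in the standing assumptions of Section~\ref{sec:detstab}, so it deserves an explicit mention if you carry out the argument rather than quote it.
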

\begin{remark}
The operator $\mathcal{L}$ is not complex-linear, and the same applies to $P(t)$, $\Pi^0$, and $\Pi$.
Additionally, $\Pi^0$ projects onto the real span of $u^*_x$ as opposed to the complex span.
Thus, in the context of the linearization we should regard $L^2_x \simeq L^2(\R;\R^2)$ as a real vector space.
\end{remark}
Using $\Pi$, we also define the linear operator $\mathcal{P}$ as follows:
\begin{equation}
\label{eq:defP}
    \mathcal{P} \colon f \mapsto \frac{\langle f - \Pi f, u^*_x \rangle_{L^2_x}}{\lVert u^*_x \rVert_{L^2_x}^2}.
\end{equation}
\begin{proposition}
\label{prop:P}
The operator $\mathcal{P}$ is bounded from $L^2_x$ to $\R$, and for every $f \in L^2_x$ we have the decomposition
\begin{equation*}
    f = \Pi f + \mathcal{P}(f) u^*_x.
\end{equation*}
\end{proposition}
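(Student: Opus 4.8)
The plan is to unwind the definitions, leaning on the structure of the spectral projection furnished by Theorem~\ref{thm:Lprops}. The key observation is that $\Pi^0 = I - \Pi$ is the Riesz projection associated with the eigenvalue $0$ of $\mathcal{L}$, and that since $0$ has algebraic multiplicity one, the range of $\Pi^0$ equals its generalized eigenspace, which is precisely the one-dimensional real subspace $\mathrm{span}_{\R}(u^*_x)$ (see the remark following Theorem~\ref{thm:Lprops}, keeping in mind that $\mathcal{L}$, $\Pi$, and $\Pi^0$ are only real-linear, so $L^2_x$ is to be viewed as the real Hilbert space $L^2(\R;\R^2)$). Consequently, for every $f \in L^2_x$ the element $f - \Pi f = \Pi^0 f$ is a uniquely determined real multiple of $u^*_x$, say $f - \Pi f = c(f)\,u^*_x$ with $c(f) \in \R$; here we use that $u^*$ is smooth and nontrivial, so $\lVert u^*_x \rVert_{L^2_x} > 0$.

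First I would identify $c(f)$ with $\mathcal{P}(f)$. Pairing the identity $f - \Pi f = c(f)\,u^*_x$ with $u^*_x$ in the $L^2_x$-inner product and using that $c(f)$ is a real scalar gives $\langle f - \Pi f, u^*_x \rangle_{L^2_x} = c(f)\,\lVert u^*_x \rVert_{L^2_x}^2$; dividing by $\lVert u^*_x \rVert_{L^2_x}^2$ shows $c(f) = \mathcal{P}(f)$. This yields the decomposition $f = \Pi f + \mathcal{P}(f)\,u^*_x$ at once, and in particular confirms that $\mathcal{P}$ is $\R$-valued, which is not obvious a priori from the defining formula since $\langle \cdot, \cdot \rangle_{L^2_x}$ is complex. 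Linearity of $\mathcal{P}$ is then inherited from linearity of $\Pi^0$ and of the inner product in its first argument.

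For boundedness I would estimate directly, via Cauchy--Schwarz and boundedness of $\Pi^0$: $\lvert \mathcal{P}(f) \rvert \leq \lVert f - \Pi f \rVert_{L^2_x}\,\lVert u^*_x \rVert_{L^2_x}^{-1} = \lVert \Pi^0 f \rVert_{L^2_x}\,\lVert u^*_x \rVert_{L^2_x}^{-1} \leq \bigl(\lVert \Pi^0 \rVert_{\mathcal{L}(L^2_x)}/\lVert u^*_x \rVert_{L^2_x}\bigr)\lVert f \rVert_{L^2_x}$, so $\mathcal{P} \in \mathcal{L}(L^2_x;\R)$. There is no genuine obstacle in this argument; the only point requiring a little care is the real-versus-complex bookkeeping flagged above, which is what guarantees both that $\Pi^0 f$ is honestly a real multiple of $u^*_x$ and that the inner product defining $\mathcal{P}(f)$ comes out real.
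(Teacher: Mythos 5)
Your proposal is correct and follows essentially the same route as the paper: use that $\Pi^0 = I - \Pi$ projects onto the (real) span of $u^*_x$ to write $f = \Pi f + a\,u^*_x$, pair with $u^*_x$ to identify $a = \mathcal{P}(f)$, and obtain boundedness from Cauchy--Schwarz together with boundedness of the spectral projection. The extra care you take with the real-versus-complex bookkeeping is a fine (and welcome) elaboration, but not a different argument.
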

\begin{proof}
The boundedness of $\mathcal{P}$ follows from the boundedness of $\Pi$ and the Cauchy--Schwarz inequality.
Now fix $f \in L^2_x$. Since $I = \Pi + \Pi^0$ and $\Pi^0$ projects onto the span of $u^*_x$, there exists a unique $a \in \R$ such that
\begin{equation*}
    f = \Pi f + \Pi^0 f = \Pi f + a u^*_x.
\end{equation*}
Rearranging this equation, taking inner products with $u^*_x$ and dividing by $\lVert u^*_x \rVert_{L^2_x}^2$ shows that $a = \mathcal{P}(f)$.
\end{proof}
We now formulate appropriate Strichartz estimates for the semigroups $P(\cdot)\Pi$ and $P(\cdot) \Pi^0$ separately.
Using the decomposition ${P(t) = P(t) \Pi + P(t) \Pi^0}$, we also obtain Strichartz estimates for $P(t)$.
\begin{proposition}[Strichartz estimates for $P(\cdot)\Pi$]
    \label{prop:strichartzPP}
    Let $(r,p) \neq (4,\infty)$ be admissible. There exists a constant $C$, such that the estimates
    \begin{subequations}
    \label{eq:PPstrich}
    \begin{align}
        \label{eq:PPstrichhom}
        \lVert P(\cdot)\Pi f \rVert_{L^r(0,T;L^p_x)} &\leq C \lVert f \rVert_{L^2_x}, \\
        \label{eq:PPstrichconv}
        \Bigl\lVert \int_0^{\cdot} P(\cdot-t')\Pi g(t')\d t' \Bigr\rVert_{L^r(0,T;L^p_x)} 
        &\leq C \lVert g \rVert_{L^1(0,T;L^2_x)}, \\
        \label{eq:PPstrichstoch}
        \Bigl\lVert \int_0^{\cdot} P(\cdot-t')\Pi h(t')\Phi \d W(t') \Bigr\rVert_{L^q_{\Omega}(L^r(0,T;L^p_x))} 
        &\leq C \sqrt{q} T^{\frac{1}{2} - \frac{1}{q}} \beta \lVert h \rVert_{L^q_{\Omega}(L^{q}(0,T;L^2_x))}, 
    \end{align}
    \end{subequations}
    hold for all $q \in [2,\infty)$, $T \in (0,\infty)$, $f \in L^2_x$, $g \in L^1(0,T;L^2_x)$, $h \in L^q_{\Omega}(L^q(0,T;L^2_x))$, and $\phi \in L^2(\R;\R)$ (recall \eqref{eq:defphi}).
\end{proposition}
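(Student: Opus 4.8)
The plan is to exploit the structural identity $\mathcal{L} = i\Delta + B$, where, by \eqref{eq:defL} and \eqref{eq:defLstraight}, $Bv \coloneqq Lv + i\kappa\{u^*,u^*,v\}$ is a bounded real-linear operator on $L^2_x$ (and, as $u^*$ is Schwartz, on every $L^p_x$). Thus $P(t)=e^{t\mathcal{L}}$ arises from $S(t)$ through the Duhamel formula $P(t)=S(t)+\int_0^t S(t-t')BP(t')\,\d t'$, and, more generally, the deterministic and stochastic convolutions against $P$ satisfy the same mild equations as the corresponding convolutions against $S$ with the additional drift $B(\,\cdot\,)$. I would substitute these identities into the left-hand sides of \eqref{eq:PPstrich}, estimate the resulting $S$-convolutions by Theorem~\ref{thm:strichartz}, and absorb the $B$-terms using the exponential decay \eqref{eq:Pdecay} of $P(\cdot)\Pi$. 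Throughout I use Proposition~\ref{prop:P} to split $\Pi=I-\Pi^0$ with $\Pi^0 f=\mathcal{P}(f)u^*_x$ rank one; since $\Pi^0$ is the spectral projection onto a simple isolated eigenvalue of $\mathcal{L}$ it is given by integration against an exponentially localized kernel (the eigenmode $u^*_x$ and the corresponding adjoint mode both decay exponentially), so $\Pi^0$ — hence $\Pi$ — extends boundedly to $L^p_x$ for every $p\in[1,\infty]$.

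For \eqref{eq:PPstrichhom} I would write $P(\cdot)\Pi f=S(\cdot)\Pi f+\int_0^\cdot S(\cdot-t')BP(t')\Pi f\,\d t'$. Since $S(\cdot)\Pi f=S(\cdot)f-\mathcal{P}(f)S(\cdot)u^*_x$, the first term is bounded in $L^r(0,T;L^p_x)$ by $C\|f\|_{L^2_x}$ via \eqref{eq:strichhom} (with $s=0$) and $|\mathcal{P}(f)|\lesssim\|f\|_{L^2_x}$, while \eqref{eq:strichconv} with the pair $(\alpha,\delta)=(\infty,2)$ bounds the second term by $C\|BP(\cdot)\Pi f\|_{L^1(0,T;L^2_x)}\le C\|B\|M\|f\|_{L^2_x}\int_0^\infty e^{-at'}\,\d t'$, uniformly in $T$. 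The convolution estimate \eqref{eq:PPstrichconv} proceeds identically: writing $v(t)\coloneqq\int_0^tP(t-t')\Pi g(t')\,\d t'=\int_0^\cdot S(\cdot-t')Bv(t')\,\d t'+\int_0^\cdot S(\cdot-t')\Pi g(t')\,\d t'$, the last term is $\lesssim\|g\|_{L^1(0,T;L^2_x)}$ by \eqref{eq:strichconv} (splitting $\Pi g$ as above and regarding $t'\mapsto\mathcal{P}(g(t'))u^*_x$ as an $L^2_x$-valued map), while $\|v\|_{L^1(0,T;L^2_x)}\le\tfrac{M}{a}\|g\|_{L^1(0,T;L^2_x)}$ by \eqref{eq:Pdecay} and Young's inequality, which feeds back into the $Bv$-term; all constants are $T$-independent. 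A density argument in $g$ justifies the identities used.

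The stochastic estimate \eqref{eq:PPstrichstoch} is the delicate one, because a single global Duhamel reduction produces a drift $Bv$ whose space-time $L^1$-norm grows like $T$ rather than $T^{1/2-1/q}$. I would localize: decompose $[0,T]$ into unit intervals $I_N=[N,N+1)\cap[0,T]$ and, for $v(t)\coloneqq\int_0^tP(t-t')\Pi h(t')\Phi\,\d W(t')$ and $t\in I_N$, use the restart identity $v(t)=P(t-N)v(N)+z_N(t)$ with $z_N(t)\coloneqq\int_N^tP(t-t')\Pi h(t')\Phi\,\d W(t')$. As $v(N)\in\operatorname{ran}\Pi$, the term $P(\cdot-N)v(N)=P(\cdot-N)\Pi v(N)$ is controlled pathwise by the already-proven \eqref{eq:PPstrichhom}, giving an $L^r(I_N;L^p_x)$-bound by $C\|v(N)\|_{L^2_x}$; and by the Burkholder inequality (with $\mathcal{O}(\sqrt q)$ constant, as in the proof of \eqref{eq:strichstoch}), \eqref{eq:hilbertschmidteq} and \eqref{eq:Pdecay}, $\|v(N)\|_{L^q_\Omega(L^2_x)}\le C\sqrt q\,\beta\,(\rho*g)(N)^{1/2}$ with $\rho(s)=e^{-2as}\mathbbm{1}_{s\ge0}$ and $g(t')=\|h(t')\|_{L^q_\Omega(L^2_x)}^2$. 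For $z_N$ I would commute $\Pi$ outwards, $z_N=\Pi\tilde z_N$ with $\tilde z_N(t)\coloneqq\int_N^tP(t-t')h(t')\Phi\,\d W(t')$ (legitimate since $\Pi$ commutes with $P$ and is bounded on $L^p_x$), rewrite $\tilde z_N$ as an $S$-stochastic convolution plus a $B\tilde z_N$-drift, bound the former on $I_N$ by \eqref{eq:strichstoch} and the latter by \eqref{eq:strichconv} together with $\|\tilde z_N(t)\|_{L^q_\Omega(L^2_x)}\le C\sqrt q\,\beta M'\|h\|_{L^q_\Omega(L^2(I_N;L^2_x))}$; here the crude bound $\|P(\sigma)\|_{\mathcal{L}(L^2_x)}\le M'\coloneqq\sup_{\sigma\in[0,1]}\|P(\sigma)\|_{\mathcal{L}(L^2_x)}$ suffices precisely because $I_N$ has unit length, so no power of $T$ is incurred. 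The outcome is the uniform per-interval bound $\|v\|_{L^q_\Omega(L^r(I_N;L^p_x))}\lesssim\|v(N)\|_{L^q_\Omega(L^2_x)}+\sqrt q\,\beta\,\|h\|_{L^q_\Omega(L^2(I_N;L^2_x))}$.

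Finally I would sum over $N$: pushing the $\Omega$- and time-norms through the sum (by Minkowski if $q\ge r$, by subadditivity if $q<r$) and using $\ell^{\min(q,r)}\hookrightarrow\ell^2$, the $v(N)$-part contributes $\lesssim\sqrt q\,\beta\bigl(\sum_N(\rho*g)(N)\bigr)^{1/2}\lesssim\sqrt q\,\beta\|g\|_{L^1(0,T)}^{1/2}$ (the geometric weights sum), and the $h$-part contributes $\lesssim\sqrt q\,\beta\bigl(\sum_N\|h\|_{L^q_\Omega(L^2(I_N;L^2_x))}^2\bigr)^{1/2}\lesssim\sqrt q\,\beta\,T^{1/2-1/q}\|h\|_{L^q_\Omega(L^q(0,T;L^2_x))}$ (Hölder, first in time on each $I_N$, then in $N$); since $\|g\|_{L^1(0,T)}^{1/2}=\|h\|_{L^2(0,T;L^q_\Omega(L^2_x))}\le T^{1/2-1/q}\|h\|_{L^q_\Omega(L^q(0,T;L^2_x))}$, this gives \eqref{eq:PPstrichstoch}. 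I expect the main obstacle to be exactly this last estimate: one must at once handle the projection $\Pi$ sitting inside a stochastic convolution — which I resolve by commuting it outward over short time intervals, where only crude semigroup bounds are needed — and still recover the sharp factor $T^{1/2-1/q}$, which is what forces the unit-interval decomposition rather than a single global reduction; the (easier) homogeneous and convolution estimates feed into this argument and so must be proven first.
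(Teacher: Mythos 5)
Your proposal is correct, but it takes a genuinely different route from the paper for the stochastic estimate. For \eqref{eq:PPstrichhom} you argue essentially as the paper does (a Duhamel identity with respect to $S$, control of the drift in $L^1(0,T;L^2_x)$ via the decay \eqref{eq:Pdecay}, then \eqref{eq:strichhom}--\eqref{eq:strichconv}); for \eqref{eq:PPstrichconv} the paper simply combines Minkowski's integral inequality with \eqref{eq:PPstrichhom}, which is shorter than your second Duhamel expansion but equivalent in outcome. The real divergence is \eqref{eq:PPstrichstoch}: the paper reruns the argument of Appendix~\ref{app:stochstrich} with $S(t)$ replaced by $P(t)\Pi$, i.e.\ it applies Seidler's theorem in the $2$-smooth space $L^r(0,T;L^p_x)$ to the operator family $\psi\mapsto\mathbbm{1}_{[t',T]}(\cdot)P(\cdot-t')\Pi\psi$, whose uniform boundedness is precisely \eqref{eq:PPstrichhom}, and obtains the factor $T^{\frac{1}{2}-\frac{1}{q}}$ from a single H\"older step; the exceptional pair $(r,p)=(\infty,2)$ is then treated separately by the factorization method, since $L^{\infty}(0,T;L^2_x)$ is not $2$-smooth and $P(t)\Pi$ is not unitary. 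Your unit-interval restart scheme replaces this by per-interval reductions to Theorem~\ref{thm:strichartz} plus an $\ell^2$-summation powered by the exponential decay: it is longer, but it only uses Seidler's result in its Hilbert-space (Burkholder-type) form and through the already-stated \eqref{eq:strichstoch}, and it handles $(\infty,2)$ on the same footing instead of via factorization. Your bookkeeping (restart identity, the $(\rho*g)$-bound, geometric summation, H\"older in time and in $N$) does reproduce the stated bound $C\sqrt{q}\,T^{\frac{1}{2}-\frac{1}{q}}\beta\lVert h\rVert_{L^q_{\Omega}(L^q(0,T;L^2_x))}$.

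One caveat: you invoke boundedness of $\Pi$ (equivalently of $\Pi^0$) on $L^p_x$ for all $p\in[1,\infty]$ in order to commute $\Pi$ out of the local stochastic convolution $z_N$. This is true (the adjoint zero mode is exponentially localized), but it is not among the properties of $\mathcal{L}$ recorded in Theorem~\ref{thm:Lprops}, so as written it is an unproven ingredient. It is also unnecessary: apply the Duhamel identity $P(s)\Pi=S(s)\Pi+\int_0^s S(s-\sigma)BP(\sigma)\Pi\,\d\sigma$ directly inside $z_N$, so that the drift term becomes $\int_N^t S(t-\sigma)Bz_N(\sigma)\,\d\sigma$ and only the $L^2_x$-boundedness of $\Pi$ and $B$ is ever used; the rest of your estimate then goes through unchanged.
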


\begin{proof}
    We first show \eqref{eq:PPstrichhom}.
    Consider for some $f \in L^2_x$ the evolution equation
    \begin{equation}
        \begin{aligned}
        \label{eq:strichdu}
        \d u &= [i \Delta u - i \nu u - \epsilon(\gamma u - \mu \overline{u}) + i \kappa \{u^*,u^*,u\}]\d t, \\
        u(0) &= \Pi f.
        \end{aligned}
    \end{equation}
    By standard semigroup theory, it can be shown that \eqref{eq:strichdu} has a unique solution \\${u \in C([0,t];L^2_x)}$, which satisfies the following equalities:
    \begin{subequations}
    \label{eq:strichut}
        \begin{align}
            \label{eq:strichutP}
            u(t) &= P(t)\Pi f, \\
            \label{eq:strichutS}
            u(t) &= S(t)\Pi f + \int_0^t S(t-t')\bigl(- i \nu u - \epsilon(\gamma u - \mu \overline{u}) + i \kappa \{u^*,u^*,u\}\bigr) \d t'.
        \end{align}
    \end{subequations}
    Using the decay estimate \eqref{eq:Pdecay} from Theorem~\ref{thm:Lprops}, we first observe that
    \begin{equation}
        \label{eq:strichuL1}
        \lVert u \rVert_{L^1(0,T;L^2_x)} \overset{\eqref{eq:strichutP}}{=} \lVert P(\cdot)\Pi f \rVert_{L^1(0,T;L^2_x)} 
        \overset{\eqref{eq:Pdecay}}{\leq} \lVert f \rVert_{L^2_x} \int_0^T M e^{-at}\d t  \leq M a^{-1} \lVert f \rVert_{L^2_x}.
    \end{equation}
    From \eqref{eq:strichutS} and Theorem~\ref{thm:strichartz}, it now follows that
    \begin{align*}
        \lVert u \rVert_{L^r(0,T;L^p_x)} 
        \overset{\eqref{eq:strichhom},\eqref{eq:strichconv}}&{\leq} C \bigl(\lVert \Pi f \rVert_{L^2_x} 
        + \lVert - i \nu u - \epsilon(\gamma u - \mu \overline{u}) + i \kappa \{u^*,u^*,u\} \rVert_{L^1(0,T;L^2_x)}\bigr) \\
        &  \leq C \lVert \Pi f \rVert_{L^2_x} + C' \lVert u \rVert_{L^1(0,T;L^2_x)}
        \overset{\eqref{eq:strichuL1}}{\leq} C'' \lVert f \rVert_{L^2_x},
    \end{align*}
    which shows \eqref{eq:PPstrichhom}.
    To show \eqref{eq:PPstrichconv}, we use Minkowski's integral inequality and \eqref{eq:PPstrichhom}:
    \begin{align*}
        \Bigl\lVert \int_0^{\cdot} &P(\cdot-t')\Pi g(t')\d t' \Bigr\rVert_{L^r(0,T;L^p_x)} 
        = \Bigl\lVert \int_0^T \mathbbm{1}_{[t',T]}(\cdot) P(\cdot-t')\Pi g(t')\d t' \Bigr\rVert_{L^r(0,T;L^p_x)} \\
        &\leq \int_0^T \lVert \mathbbm{1}_{[t',T]}(\cdot) P(\cdot-t')\Pi g(t') \rVert_{L^r(0,T;L^p_x)} \d t'
        = \int_0^T \lVert P(\cdot)\Pi g(t') \rVert_{L^r(0,T - t';L^p_x)} \d t' \\
        \overset{\eqref{eq:PPstrichhom}}&{\leq} C \int_0^T \lVert g(t') \rVert_{L^2_x} \d t'.
   \end{align*}
   To obtain the stochastic estimate \eqref{eq:PPstrichstoch} for $(r,p) \neq (\infty,2)$, we simply repeat the first part of the proof of \eqref{eq:strichstoch} from Appendix~\ref{app:stochstrich}, replacing all occurences of $S(t)$ with $P(t)\Pi$ and using \eqref{eq:PPstrichhom} instead of \eqref{eq:strichhom} in the intermediate steps. Using H\"older's inequality at the end then gives \eqref{eq:PPstrichstoch}.
   
   For the case $(r,p) = (\infty,2)$, the proof strategy in Appendix~\ref{app:stochstrich} is no longer applicable, since $P(t)\Pi$ is not unitary.
   Instead, we estimate the stochastic convolution using the well-known factorization method (see for instance~\cite[Theorem 4.5]{van_neerven_maximal_2020} for a version applicable to our setting), which gives the result after applying \eqref{eq:hilbertschmidteq}.
\end{proof}
For $P(t)\Pi^0$, there is significantly more freedom in choosing the exponents, and the requirement of admissibility can be dropped.
In this case, the estimates follow not from any dispersive phenomena, but rather from the fact that the range of $\Pi^0$ is one-dimensional, being spanned by $u^*_x$.
\begin{proposition}
\label{prop:strichartzPP0}
Let $p \in [1,\infty]$.
There exists a constant $C$, such that the estimates
    \begin{subequations}
    \label{eq:PP0strich}
    \begin{align}
        \label{eq:PP0strichhom}
        \lVert P(\cdot)\Pi^0 u_0 \rVert_{C([0,T];L^p_x)} &\leq C \lVert u_0\rVert_{L^2_x}, \\
        \label{eq:PP0strichconv}
        \Bigl\lVert \int_0^{\cdot} P(\cdot-t')\Pi^0f(t')\d t' \Bigr\rVert_{C([0,T];L^p_x)} 
        &\leq C \lVert f \rVert_{L^1(0,T;L^2_x)}, \\
        \label{eq:PP0strichstoch}
        \Bigl\lVert \int_0^{\cdot} P(\cdot-t')\Pi^0g(t')\Phi \d W(t') \Bigr\rVert_{L^q_{\Omega}(C([0,T];L^{p}_x))} 
        &\leq C \sqrt{q} \beta \lVert g \rVert_{L^q_{\Omega}(L^2(0,T;L^2_x))}, 
    \end{align}
    \end{subequations}
    hold for all $q \in [2,\infty)$, $T \in (0,\infty]$, $u_0 \in L^2_x$, $f \in L^1(0,T;L^2_x)$, $g \in L^q_{\Omega}(L^2(0,T;L^2_x))$, and $\phi \in L^2(\R;\R)$ (recall \eqref{eq:defphi}).
\end{proposition}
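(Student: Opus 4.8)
The plan is to exploit the fact that $P(t)$ acts trivially on the range of $\Pi^0$, which reduces the proposition to elementary facts about a rank-one operator together with a Burkholder--Davis--Gundy estimate. \emph{Step 1: the identity $P(t)\Pi^0 = \Pi^0$.} Since $u^*_x \in D(\mathcal{L})$ and $\mathcal{L}u^*_x = 0$ by Theorem~\ref{thm:Lprops}(2), and a $C_0$-semigroup commutes with its generator on the domain, the map $t \mapsto P(t)u^*_x$ has vanishing derivative $P(t)\mathcal{L}u^*_x = 0$, so $P(t)u^*_x = u^*_x$ for all $t \ge 0$. Because $\operatorname{ran}\Pi^0 = \operatorname{span}_{\R}\{u^*_x\}$ (Theorem~\ref{thm:Lprops}(2)--(3) and the remark thereafter), this yields $P(t)\Pi^0 = \Pi^0$ for all $t \ge 0$. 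Moreover, by Proposition~\ref{prop:P} every $h \in L^2_x$ satisfies $\Pi^0 h = \mathcal{P}(h)\,u^*_x$, and since $u^*_x$ is a Schwartz function we have $\lVert u^*_x \rVert_{L^p_x} < \infty$ for every $p \in [1,\infty]$; hence, writing $C_p \coloneqq \lVert \mathcal{P} \rVert \lVert u^*_x \rVert_{L^p_x}$, we obtain the rank-one bound $\lVert \Pi^0 h \rVert_{L^p_x} \le C_p \lVert h \rVert_{L^2_x}$.

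\emph{Step 2: homogeneous and convolution estimates.} Since $P(\cdot)\Pi^0 u_0 \equiv \Pi^0 u_0$ is constant in $t$, \eqref{eq:PP0strichhom} follows immediately from the rank-one bound. As $\Pi^0$ is bounded and linear it commutes with Bochner integration (reading $L^2_x \simeq L^2(\R;\R^2)$ as a real Hilbert space, so that the real-linearity of $\Pi^0$ suffices), whence
\begin{equation*}
    \int_0^t P(t-t')\Pi^0 f(t')\,\d t' = \Pi^0 \int_0^t f(t')\,\d t'.
\end{equation*}
The right-hand side is continuous in $t$ with values in the finite-dimensional space $\operatorname{ran}\Pi^0$ (and converges in $L^2_x$ as $t \to \infty$ when $f \in L^1(0,\infty;L^2_x)$), and $\lVert \int_0^t f(t')\,\d t' \rVert_{L^2_x} \le \lVert f \rVert_{L^1(0,T;L^2_x)}$; combining this with the rank-one bound gives \eqref{eq:PP0strichconv}.

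\emph{Step 3: stochastic estimate.} Set $N(t) \coloneqq \int_0^t g(t')\Phi\,\d W(t')$, a continuous $L^2_x$-valued martingale. Exactly as in Step 2, $\Pi^0$ commutes with the stochastic integral, so the left-hand side of \eqref{eq:PP0strichstoch} equals $\lVert \Pi^0 N(\cdot) \rVert_{L^q_{\Omega}(C([0,T];L^p_x))}$, with $\Pi^0 N$ inheriting a continuous version from $N$. Applying the rank-one bound pointwise in $(\omega,t)$ gives $\lVert \Pi^0 N(t) \rVert_{L^p_x} \le C_p \lVert N(t) \rVert_{L^2_x}$, so it remains to bound $\E[\sup_{t \in [0,T]} \lVert N(t) \rVert_{L^2_x}^q]^{1/q}$. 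This is a Burkholder--Davis--Gundy estimate for a Hilbert-space-valued stochastic integral; proceeding exactly as in the proof of \eqref{eq:strichstoch} in Appendix~\ref{app:stochstrich} and invoking~\cite[Theorem 1.1]{seidler_exponential_2010} to keep the constant of order $\sqrt q$, one obtains
\begin{equation*}
    \E\Bigl[\sup_{t \in [0,T]} \lVert N(t) \rVert_{L^2_x}^q\Bigr]^{1/q} \le C\sqrt q \, \E\Bigl[\Bigl(\int_0^T \lVert g(t')\Phi \rVert_{\mathcal{L}_2(L^2(\R;\R);L^2_x)}^2\,\d t'\Bigr)^{q/2}\Bigr]^{1/q} \overset{\eqref{eq:hilbertschmidteq}}{=} C\sqrt q\,\beta\,\lVert g \rVert_{L^q_{\Omega}(L^2(0,T;L^2_x))}.
\end{equation*}
Collecting constants yields \eqref{eq:PP0strichstoch}; since no power of $T$ appears, it holds for all $T \in (0,\infty]$.

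\emph{Main obstacle.} There is essentially no obstacle once the identity $P(t)\Pi^0 = \Pi^0$ is recognized: the content of the proposition is just that $\Pi^0$ has rank one with a rapidly decaying generator $u^*_x$, so that $L^p_x$- and $L^2_x$-norms on its range are equivalent. The only points requiring a little care are the real- versus complex-linearity bookkeeping when commuting $\Pi^0$ past the stochastic integral (handled by working over $\R$ throughout, as elsewhere in the paper) and retaining the $\mathcal{O}(\sqrt q)$ dependence of the Burkholder--Davis--Gundy constant, so that the estimate is uniform in $q$ in the same way as \eqref{eq:strichstoch}.
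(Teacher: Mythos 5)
Your proposal is correct and follows essentially the same route as the paper: establish $P(t)\Pi^0 = \Pi^0$ from $\mathcal{L}u^*_x = 0$, use the rank-one bound $\lVert \Pi^0 h \rVert_{L^p_x} \lesssim \lVert h \rVert_{L^2_x}$ coming from the rapid decay of $u^*_x$, handle the deterministic convolution by pulling $\Pi^0$ through the integral (the paper uses Minkowski's inequality, an equivalent step), and control the stochastic integral by a maximal $L^q$ estimate with $\mathcal{O}(\sqrt q)$ constant (the paper cites Burkholder--Davis--Gundy where you cite Seidler, which is immaterial). No gaps.
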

\begin{proof}
Since $\mathcal{L}u^*_x = 0$ by Theorem~\ref{thm:Lprops}, it holds that $P(t)u^*_x = u^*_x$.
After observing that the range of $\Pi^0$ is spanned by $u^*_x$, it follows that $P(t)\Pi^0 = \Pi^0$ for every $t$.
Thus, we get
\begin{align}
    \label{eq:PP0Lpest}
    \lVert P(t)\Pi^0 u_0 \rVert_{L^p_x} = \lVert \Pi^0 u_0 \lVert_{L^p_x} = \frac{\lVert u^*_x \rVert_{L^p_x}}{\lVert u^*_x \rVert_{L^2_x}} \lVert \Pi^0 u_0 \rVert_{L^2_x} \leq C \lVert u_0 \lVert_{L^2_x},
\end{align}
where $\lVert u^*_x\rVert_{L^p_x} < \infty$ because $u^*_x$ decays rapidly.
Using Minkowski's inequality, we can additionally estimate
\begin{align*}
\Bigl\lVert \int_0^{t} P(t-t')\Pi^0f(t')\d t' \Bigr\rVert_{L^p_x}
\leq  \int_0^{t} \lVert P(t-t')\Pi^0f(t') \rVert_{L^p_x}\d t'
\overset{\eqref{eq:PP0Lpest}}{\leq} C \int_0^t \lVert f(t')\rVert_{L^2_x} \d t',
\end{align*}
at which point \eqref{eq:PP0strichconv} follows by taking the supremum over $t \in [0,T]$.
Finally, we estimate
\begin{align*}
    \Bigl\lVert \int_0^{\cdot} P(\cdot-t')\Pi^0g(t')\Phi \d W(t') \Bigr\rVert_{L^q_{\Omega}(C([0,T];L^p_x))}
    &= \Bigl\lVert \int_0^{\cdot} \Pi^0 g(t')\Phi \d W(t') \Bigr\rVert_{L^q_{\Omega}(C([0,T];L^p_x))} \\
    &\leq C \Bigl\lVert \int_0^{\cdot} g(t')\Phi \d W(t') \Bigr\rVert_{L^q_{\Omega}(C([0,T];L^2_x))} \\
    &\leq C' \sqrt{q} \lVert g \Phi \rVert_{L^q_{\Omega}(L^2(0,T;\mathcal{L}_2(L^2(\R;\R);L^2_x)))} \\
    \overset{\eqref{eq:hilbertschmidteq}}&{=} C' \sqrt{q}\beta \lVert g  \rVert_{L^q_{\Omega}(L^2(0,T;L^2_x))},
\end{align*}
where we have used the Burkholder--Davis--Gundy inequality for the penultimate step.
\end{proof}

To get appropriate Gaussian tail bounds, we need the following elementary lemma.
\begin{lemma}
\label{lem:tail}
Let $\xi$ be a nonnegative real-valued random variable which satisfies
\begin{align*}
    \lVert \xi \rVert_{L^p_{\Omega}} \leq C \sqrt{p}
\end{align*}
for all sufficiently large $p < \infty$, where $C$ is independent of $p$.
Then $\xi$ satisfies the Gaussian tail bound
\begin{align*}
    \PP [\xi \geq \lambda] \leq \exp\bigl(-e^{-2} C^{-2} \lambda^2 \bigr)
\end{align*}
for all sufficiently large $\lambda$.
\end{lemma}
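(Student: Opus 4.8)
The plan is to use the standard equivalence between moment bounds and tail bounds, optimized over the choice of $p$. First I would use Markov's (Chebyshev's) inequality in the form $\PP[\xi \geq \lambda] \leq \lambda^{-p}\,\E[\xi^p] = \lambda^{-p}\lVert \xi \rVert_{L^p_\Omega}^p$, valid for any $p$ for which the moment is finite. Applying the hypothesis $\lVert \xi \rVert_{L^p_\Omega} \leq C\sqrt{p}$ gives
\begin{equation*}
    \PP[\xi \geq \lambda] \leq \left(\frac{C\sqrt{p}}{\lambda}\right)^{p}
\end{equation*}
for all sufficiently large $p$. The right-hand side is minimized (roughly) by choosing $p$ so that $C\sqrt{p}/\lambda$ is a fixed small constant; the cleanest choice is $p = e^{-2} C^{-2}\lambda^2$, which makes $C\sqrt{p}/\lambda = e^{-1}$, so that the bound becomes $(e^{-1})^{p} = \exp(-p) = \exp(-e^{-2}C^{-2}\lambda^2)$, exactly the claimed inequality.

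The key steps, in order: (i) write down Chebyshev applied to the $p$-th moment; (ii) substitute the hypothesized bound; (iii) choose $p = e^{-2}C^{-2}\lambda^2$ and check that this $p$ is "sufficiently large" once $\lambda$ is sufficiently large (which is why the conclusion is only asserted for large $\lambda$ — this is where the quantifiers match up, since the hypothesis only holds for large $p$); (iv) simplify the resulting expression to obtain $\exp(-e^{-2}C^{-2}\lambda^2)$. One minor technical point is that $p = e^{-2}C^{-2}\lambda^2$ need not be an integer, but Chebyshev's inequality with $p$-th absolute moments holds for all real $p \geq 1$, so no rounding is needed; alternatively one could round down and absorb the harmless constant loss, but it is cleaner to just allow real exponents.

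I do not expect any genuine obstacle here — this is a textbook computation (it is essentially the derivation of sub-Gaussian tails from the $\sqrt{p}$ growth of moments). The only thing requiring a moment's care is bookkeeping the "for all sufficiently large" quantifiers: the hypothesis gives the moment bound for $p \geq p_0$ for some $p_0$, so the conclusion holds for $\lambda \geq e\, C\sqrt{p_0}$, i.e. for all sufficiently large $\lambda$, which is exactly what is stated. The constant $e^{-2}$ in the exponent is not optimal (one could push it slightly), but it is a clean, explicit choice and suffices for the applications to Gaussian tail estimates on the stochastic remainder terms elsewhere in the paper.
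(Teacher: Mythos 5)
Your proposal is correct and follows exactly the paper's argument: Markov's inequality applied to $\xi^p$, insertion of the moment bound $\lVert \xi \rVert_{L^p_\Omega} \leq C\sqrt{p}$, and the choice $p = e^{-2}C^{-2}\lambda^2$, with the ``sufficiently large $\lambda$'' clause absorbing the requirement that $p$ be sufficiently large. Nothing further is needed.
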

\begin{proof}
By Markov's inequality and the assumption on $\xi$, we have
\begin{align*}
    \PP [\xi \geq \lambda] = \PP [ \xi^p \geq \lambda^p] \leq \lambda^{-p} C^p \sqrt{p}^p = (\lambda^{-1}C \sqrt{p})^p
\end{align*}
for $p$ sufficiently large.
Choosing $p = e^{-2} C^{-2} \lambda^2$ (which can be made sufficiently large by increasing $\lambda$) gives the result.
\end{proof}

\section{Main results}
\label{sec:mainresult}
We now state the main results.
Theorem~\ref{thm:wellposed} states the mild well-posedness of \eqref{eq:spfnlsito}.
In Section~\ref{sec:asym} we derive an asymptotic expansion of solutions to \eqref{eq:spfnlsito} around a solitary wave centered at the origin.
The validity of this expansion is stated in Theorem~\ref{thm:asym}.
Next, we introduce and motivate our definition of the phase process in Section~\ref{sec:orbital}.
Theorem~\ref{thm:relaxation} then gives a bound on the fluctuations around the shifted wave, and Proposition~\ref{prop:longterm} and Corollary~\ref{cor:longterm} state the orbital stability.

\subsection{Well-posedness}
\label{sec:wellposed}
Our first main result is the well-posedness of a mild formulation of \eqref{eq:spfnlsito}.
The proof is contained in Section~\ref{sec:proofwellposed}.
\begin{theorem}
\label{thm:wellposed}
Let $\nu,\epsilon,\gamma,\mu,\kappa > 0$, let $u_0$ be an $L^2_x$-valued $\mathcal{F}_0$-measurable random variable, let $T \in (0,\infty)$ and $\phi \in L^2(\R;\R)$. There exists a unique $\mathbb{F}$-adapted process $u \in C([0,T];L^2_x)\cap L^6(0,T;L^6_x)$ satisfying the mild-solution equation
\begin{equation}
    \label{eq:spfnlsmild}
    \begin{aligned}
        u(t) = {} & S(t) u_0 + \int_0^t S(t-t')(-i \nu u(t') -\epsilon (\gamma u(t') - \mu \overline{u}(t')) - \tfrac{1}{2}\beta^2 u(t'))\d t' \\
        & + i \kappa \int_0^t S(t-t')\lvert u(t')\rvert^2 u(t')\d t
        - i \int_0^t S(t-t')u(t')\Phi \d W(t'),
    \end{aligned}
\end{equation}
for every $t \in [0,T]$, $\mathbb{P}$-a.s.
Furthermore, $u \in L^r(0,T;L^p_x)$ for any $(r,p) \neq (4,\infty)$ which satisfies \eqref{eq:defadmissible}, and we have the a priori estimate
\begin{align}
    \label{eq:apriori}
    \lVert u(t) \rVert_{L^2_x} \leq e^{-\epsilon(\gamma - \mu)t} \lVert u_0 \rVert_{L^2_x},
\end{align}
for every $t \in [0,T]$, $\mathbb{P}$-a.s.

If we additionally assume that $\phi \in H^{s}_x$ and $u_0$ takes values in $H^{s}_x$ for some $s \in [0,\infty)$, 
then also $u \in C([0,T];H^s_x) \cap L^r(0,T;H^{s,p}_x)$ for any $(r,p) \neq (4,\infty)$ which satisfies \eqref{eq:defadmissible}.
\end{theorem}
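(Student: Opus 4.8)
The plan is to prove well-posedness by a contraction mapping argument in a suitable space of adapted processes, using the Strichartz estimates from Theorem~\ref{thm:strichartz}. First I would fix the admissible pair $(r,p) = (6,6)$ (which satisfies $\tfrac{2}{6} + \tfrac{1}{6} = \tfrac12$ and is not the endpoint) and work in the Banach space $E_T$ of $\mathbb{F}$-adapted processes $u$ with $u \in L^q_\Omega(C([0,T];L^2_x)) \cap L^q_\Omega(L^6(0,T;L^6_x))$ for suitable large $q$, endowed with the natural norm. Denote by $\mathcal{T}$ the right-hand side of \eqref{eq:spfnlsmild} viewed as a map $u \mapsto \mathcal{T}u$. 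The linear terms $-i\nu u - \epsilon(\gamma u - \mu\overline u) - \tfrac12\beta^2 u$ are bounded on $L^2_x$, so the corresponding convolution term is controlled by \eqref{eq:strichhom} and \eqref{eq:strichconv} with a factor $T$ (using that $L^1(0,T;L^2_x) \supset L^\infty(0,T;L^2_x)$ with constant $T$, or more carefully a Grönwall-type bound). The stochastic term is handled by \eqref{eq:strichstoch}, which gives a bound by $C\sqrt{q}\,\beta\,\|u\|_{L^q_\Omega(L^2(0,T;L^2_x))} \lesssim C\sqrt{q}\,\beta\, T^{1/2} \|u\|_{E_T}$. The genuinely nonlinear term requires estimating $\||u|^2u - |v|^2v\|_{L^{\alpha'}(0,T;L^{\delta'}_x)}$ for an admissible pair $(\alpha,\delta)$; the standard choice in one dimension is to use Hölder in space to write $\||u|^2u\|_{L^{\delta'}_x} \lesssim \|u\|_{L^6_x}^2 \|u\|_{L^{p_1}_x}$ and then Hölder in time, arriving at a bound by a power of $T$ times $\|u\|_{L^6(0,T;L^6_x)}^2 \|u\|_{C([0,T];L^2_x)}$, i.e. a quadratic-in-$\|u\|_{E_T}$ estimate with a positive power of $T$ in front. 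Choosing $T$ small (depending on the size of the initial data) then makes $\mathcal{T}$ a contraction on a ball in $E_T$, yielding a unique local solution; since the a priori $L^2_x$ bound \eqref{eq:apriori} is uniform in time, the local solution can be extended to any $T \in (0,\infty)$ by iterating on intervals of fixed length.

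For the a priori estimate \eqref{eq:apriori}, I would apply Itô's formula to $\|u(t)\|_{L^2_x}^2$ for the Itô equation \eqref{eq:spfnlsito}. The Laplacian and the cubic term contribute purely imaginary quantities and hence vanish after taking real parts; the term $-i\nu u$ likewise contributes nothing; the forcing $-\epsilon(\gamma u - \mu\overline u)$ contributes $-2\epsilon\gamma\|u\|^2 + 2\epsilon\mu\,\mathrm{Re}\langle \overline u, u\rangle = -2\epsilon\gamma\|u\|^2 + 2\epsilon\mu\,\mathrm{Re}\int \overline u^2$, which is bounded above by $-2\epsilon(\gamma - \mu)\|u\|^2$ by Cauchy--Schwarz; the Itô correction $-\tfrac12\beta^2 u$ contributes $-\beta^2\|u\|^2$, which is exactly cancelled by the quadratic variation term $\|u\Phi\|_{\mathcal{L}_2}^2 = \beta^2\|u\|^2$ from \eqref{eq:hilbertschmidteq}; and the stochastic integral is a (local) martingale. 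Hence $\tfrac{d}{dt}\mathbb{E}\|u(t)\|^2 \le -2\epsilon(\gamma-\mu)\mathbb{E}\|u(t)\|^2$, and a pathwise version follows since the martingale term in fact has zero drift — more carefully, one gets $d\|u(t)\|^2 \le -2\epsilon(\gamma-\mu)\|u(t)\|^2\,dt + dM_t$ with $M$ a martingale; but actually the stochastic term $-2\,\mathrm{Re}\langle iu\Phi\,dW, u\rangle = -2\,\mathrm{Im}\langle u\Phi\,dW, u\rangle$ need not vanish pathwise, so the clean pathwise bound \eqref{eq:apriori} must instead be read off from the mild formulation directly or from the conservation structure; I would argue it via the Stratonovich form \eqref{eq:spfnls}, where $-iu\circ(\phi*dW)$ is a phase rotation that preserves $\|u(t)\|_{L^2_x}$ pathwise, so only the dissipative forcing term acts on the norm, giving \eqref{eq:apriori} by a pathwise Grönwall argument.

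For the persistence-of-regularity statement, I would bootstrap: given $\phi \in H^s_x$ and $u_0 \in H^s_x$, I run the same fixed-point argument in the space $E_T^s$ built from $L^q_\Omega(C([0,T];H^s_x)) \cap L^q_\Omega(L^r(0,T;H^{s,p}_x))$ using the $H^s$-valued Strichartz estimates in Theorem~\ref{thm:strichartz} and the Hilbert--Schmidt bound \eqref{eq:hilbertschmidtest}. The only new point is controlling the cubic term in $H^{s,\delta'}_x$: here I would invoke the fractional product estimates (fractional Leibniz / Kato--Ponce type inequalities) collected in Appendix~\ref{app:hs}, together with the Sobolev embeddings among the $H^{s,p}_x$ spaces, to bound $\||u|^2 u\|_{L^{\alpha'}(0,T;H^{s,\delta'}_x)}$ by a power of $T$ times $\|u\|_{L^r(0,T;H^{s,p}_x)}^2\|u\|_{C([0,T];H^s_x)}$; the uniqueness already established in $C([0,T];L^2_x)$ guarantees this higher-regularity solution coincides with the one found before. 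The main obstacle is the nonlinear estimate: one must choose the auxiliary admissible pair $(\alpha,\delta)$ and the Hölder exponents so that the cubic term lands in the dual Strichartz space with a strictly positive power of $T$ (to close the contraction) while respecting the $H^s$ fractional-derivative bookkeeping — this is where the dispersive structure and the precise form of the one-dimensional Strichartz exponents are essential, and where care is needed for non-integer $s$.
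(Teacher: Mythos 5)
The central gap is in your first step: without truncating the nonlinearity, the contraction in $E_T = L^q_\Omega(C([0,T];L^2_x))\cap L^q_\Omega(L^6(0,T;L^6_x))$ does not close. The cubic term loses integrability in $\omega$: H\"older's inequality only gives $\bigl\lVert\, \lvert u\rvert^2u-\lvert v\rvert^2 v\,\bigr\rVert_{L^q_\Omega(L^1(0,T;L^2_x))} \lesssim T^{\frac12}\bigl(\lVert u\rVert^2_{L^{3q}_\Omega(L^6(0,T;L^6_x))}+\lVert v\rVert^2_{L^{3q}_\Omega(L^6(0,T;L^6_x))}\bigr)\lVert u-v\rVert_{L^{3q}_\Omega(L^6(0,T;L^6_x))}$, so the map is Lipschitz only from $L^{3q}_\Omega$-based spaces into $L^q_\Omega$-based ones, and no choice of $q$ fixes this. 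Nor can you instead take $T$ small pathwise (``depending on the size of the data''): the stochastic convolution is controlled only in moments, via the factor $\sqrt q\,\beta\,T^{\frac12}$ in \eqref{eq:strichstoch}, not $\omega$-by-$\omega$, and an $\omega$-wise fixed point is incompatible with the construction of the It\^o integral and adaptedness. This is precisely why the paper, following de Bouard--Debussche and Hornung, first truncates the nonlinearity with the cutoff $\Theta_R$ built on the running $L^6(0,t;L^6_x)$-norm, solves the truncated equation globally in $L^2_\Omega(X_T)$ (Proposition~\ref{prop:truncated}), and only then removes the truncation through the stopping times $\tau_R$. Relatedly, your globalization step (``iterate on intervals of fixed length using \eqref{eq:apriori}'') silently assumes that boundedness of the $L^2_x$-norm prevents the $L^6(0,t;L^6_x)$-norm from blowing up; in the stochastic setting this is not automatic and is exactly the content of the blow-up criterion, Proposition~\ref{prop:blowup}, whose proof requires a sequence of stopping times, the stochastic Strichartz estimate, and a Borel--Cantelli argument. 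Your proposal offers no substitute for this step.

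Two smaller points. For \eqref{eq:apriori} your hesitation is unfounded: the It\^o martingale term does vanish pathwise, since $\langle i u\Phi \d W, u\rangle_{L^2_x} = i\langle \Phi \d W, \lvert u\rvert^2\rangle_{L^2_x}$ is purely imaginary because the noise is real-valued; this is the paper's argument, so no detour through the Stratonovich form is needed. What does require care is that $u$ is only a mild $L^2_x$-solution, so the standard It\^o formula for $\lVert u\rVert_{L^2_x}^2$ is not directly applicable; the paper uses the mild It\^o formula of Da Prato--Jentzen--R\"ockner, whereas your pathwise ``phase rotation'' argument is heuristic at this regularity. Finally, for persistence of regularity the paper does not run a second contraction in $H^s$-based spaces (which would inherit the problem above); it shows that the truncated fixed-point iteration maps a ball of $L^2_\Omega(X^s_T)$ into itself and identifies the limit by weak compactness (Banach--Alaoglu), with the Kato--Ponce product estimate appearing in Lemma~\ref{lem:besseltriple}, not in Appendix~\ref{app:hs}, which concerns Hilbert--Schmidt bounds.
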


\subsection{Asymptotic expansion}
\label{sec:asym}
From now on, let $\nu,\epsilon,\gamma,\mu,\kappa$, and $u^*$ be as described in Section~\ref{sec:detstab}.
Consider the SPFNLS equation \eqref{eq:spfnlsito}, now written using our notational shorthands (cf.\@ \eqref{eq:defphi}, \eqref{eq:triple}, \eqref{eq:defL}), and including an additional parameter $\sigma > 0$ which controls the strength of the noise:
\begin{equation}
    \label{eq:spfnls1}
    \d u = [i \Delta u + L u+ \tfrac{1}{3}i \kappa \{u,u,u\} - \tfrac{1}{2}\beta^2 \sigma^2 u]\d t - i \sigma u \Phi \d W.
\end{equation}
The first step towards showing orbital stability of the solitary wave is to construct an asymptotic expansion to second order in $\sigma$.
For this we use the following ansatz:
\begin{equation}
    \label{eq:ansatz1}
    u = u^* + \sigma v_1 + \sigma^2 v_2 + z,
\end{equation}
where $z$ should be regarded as being $\mathcal{O}(\sigma^3)$.
To match our ansatz, we supply \eqref{eq:spfnls1} with the initial condition
\begin{equation}
    \label{eq:uic}
    u(0) = u^* + \sigma v_{1,0} + \sigma^2 v_{2,0}.
\end{equation}
By using the additivity of the triple bracket, we see that \eqref{eq:spfnls1} can be rewritten as
\begin{equation}
    \label{eq:dubig}
    \begin{aligned}
    \d u &= [(i \Delta + L) u^* + \tfrac{1}{3}i \kappa\{u^*, u^*, u^* \}]\d t\\
    &+ \sigma \bigl( [(i \Delta + L) v_1 + i \kappa \{u^*,u^*,v_1\} ]\d t - i u^* \Phi \d W \bigr) \\
    &+ \sigma^2 \bigl( \bigl[(i \Delta + L) v_2 + i \kappa \{u^*, u^*, v_2 \} + i \kappa \{u^*, v_1, v_1 \} - \tfrac{1}{2}\beta^2 u^*\bigr]\d t - i v_1 \Phi \d W \bigr) \\
    &\quad+ \bigl[(i \Delta + L) z + i \kappa \{u^*,u^*,z\} + i \kappa R - \tfrac{1}{2}\beta^2(\sigma^3 v_1 + \sigma^4 v_2 + \sigma^2 z) \bigr]\d t \\
    &\quad- i (\sigma^3 v_2 + \sigma z) \Phi \d W,
\end{aligned}
\end{equation}
where we have abbreviated
\begin{equation}
    \label{eq:defR}
    \begin{aligned}
    R &\coloneqq 2\{u^*, \sigma v_1, \sigma^2 v_2 \} + \tfrac{1}{3}\{\sigma v_1, \sigma v_1, \sigma v_1 \} \\ 
    &+ 2\{u^*, \sigma v_1, z \} + \{u^*, \sigma^2 v_2, \sigma^2 v_2 \} + \{\sigma v_1, \sigma v_1, \sigma^2 v_2\} \\
    &+ 2\{u^*, \sigma^2 v_2, z \} + \{\sigma v_1, \sigma v_1, z\} + \{\sigma v_1, \sigma^2 v_2, \sigma^2 v_2\}  \\
    &+ \{u^*, z, z\} + 2\{\sigma v_1, \sigma^2 v_2, z\} + \tfrac{1}{3}\{\sigma^2 v_2, \sigma^2 v_2, \sigma^2 v_2 \} \\
    &+ \{\sigma v_1, z, z\} + \{\sigma^2 v_2, \sigma^2 v_2, z\} \\
    &+ \{\sigma^2 v_2, z, z \} \\
    &+ \tfrac{1}{3}\{z, z, z \}
    \end{aligned}.
\end{equation}
Note that the terms in \eqref{eq:defR} are organized according to their order in $\sigma$, and all terms are $\mathcal{O}(\sigma^3)$.
Taking the differential of \eqref{eq:ansatz1} and using \eqref{eq:defL} and \eqref{eq:dubig}, we see that if $v_1$ and $v_2$ satisfy
\begin{subequations}
\label{eq:dv1v2}
\begin{align}
    \label{eq:dv1}
    \d v_1 &= \mathcal{L}v_1 \d t - i u^* \Phi \d W, \\
    \label{eq:dv2}
    \d v_2 &= [\mathcal{L}v_2 + i \kappa \{u^*,v_1,v_1\} - \tfrac{1}{2}\beta^2 u^*] \d t - i v_1 \Phi \d W, \\
    v_1(0) &= v_{1,0}, \\
    v_2(0) &= v_{2,0},
\end{align}
\end{subequations}
then $z$ satisfies
\begin{subequations}
\label{eq:dz}
\begin{align}
    \d z &= [\mathcal{L}z + i \kappa R - \tfrac{1}{2}\beta^2(\sigma^3 v_1 + \sigma^4 v_2 + \sigma^2 z) ]\d t - i (\sigma^3 v_2 + \sigma z) \Phi \d W, \\
    z(0) &= 0
\end{align}
\end{subequations}
(note that $\d u^* = [(i \Delta + L)u^* + \tfrac{1}{3}\{u^*, u^*, u^*\}] \d t$ always holds, since both sides vanish).
We can now formulate our first main result, which states that on any fixed time interval $[0,T]$, the approximation $u \approx u^* + \sigma v_1 + \sigma^2 v_2$ is accurate to second order in $\sigma$ with high probability, as long as $v_1$ and $v_2$ are not too large.
The proof is contained in Section~\ref{sec:proofasym}.

\begin{theorem}[Asymptotic expansion, second order]
\label{thm:asym}
Let $v_{1,0}$ and $v_{2,0}$ be $\mathcal{F}_0$-measurable and $L^2_x$-valued random variables, and let $u$ be the solution to \eqref{eq:spfnls1} with initial condition \eqref{eq:uic}.
The system \eqref{eq:dv1v2} has a unique mild solution given by:
\begin{subequations}
\label{eq:v1v2}
\begin{align}
    \label{eq:v1}
    v_1(t) &= P(t)v_{1,0} - \int_0^t P(t-t')i u^* \Phi \d W(t'), \\
    \label{eq:v2}
    \begin{split}
    v_2(t) &= P(t)v_{2,0} + \int_0^t P(t-t') (i \kappa \{u^*, v_1, v_1 \} - \tfrac{1}{2}\beta^2 u^*)\d t' \\
        &\qquad- \int_0^t P(t-t')i v_1 \Phi \d W(t').
    \end{split}
\end{align}
\end{subequations}
We have $v_1, v_2 \in C([0,T];L^2_x) \cap L^r(0,T;L^p_x)$ for every $T \in (0,\infty)$ and every admissible pair $(r,p) \neq (4,\infty)$, $\mathbb{P}$-a.s.
With these $v_1$ and $v_2$, we have the asymptotic expansion
\begin{align}
    \label{eq:defz}
    u(t) = u^* + \sigma v_1(t) + \sigma^2 v_2(t) + z(t),
\end{align}
where $z$ satisfies \eqref{eq:dz}.
Furthermore, for every $T \in (0,\infty)$ and every admissible pair $(r,p) \neq (4,\infty)$, there exist strictly positive constants $c_1$, $c_2$, $\eps'$, independent of $v_{1,0}$, $v_{2,0}$, such that for the following stopping times
\begin{subequations}
\label{eq:asymtimes}
\begin{align}
    \label{eq:tauv1}
    \tau_{v_1} &\coloneqq \sup \{ t \in [0,T] : \lVert v_1 \rVert_{L^{\infty}(0,t;L^2_x) \cap L^6(0,t;L^6_x)} \leq \sigma^{-1}\eps \}, \\
    \label{eq:tauv2}
    \tau_{v_2} &\coloneqq \sup \{ t \in [0,T] : \lVert v_2 \rVert_{L^{\infty}(0,t;L^2_x) \cap L^6(0,t;L^6_x)} \leq \sigma^{-2}\eps^2 \}, \\
    \label{eq:tauz}
    \tau_{z} &\coloneqq \sup \{ t \in [0,T] : \lVert z \rVert_{L^{\infty}(0,t;L^2_x) \cap L^r(0,t;L^p_x)} \leq c_1 \eps^3 \},
\end{align}
\end{subequations}
we have the inequality
\begin{align}
    \label{eq:zprobest}
    \mathbb{P}\bigl[\tau_{z} < \min\{\tau_{v_1}, \tau_{v_2} \} \bigr] \leq \exp(-c_2 \eps^2 \sigma^{-2})
\end{align}
for all $\sigma$, $\eps$ which satisfy $0 < \sigma \leq \eps \leq \eps'$.
\end{theorem}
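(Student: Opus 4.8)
The plan is to handle the three assertions of the theorem in order: solve the linear systems for $v_1,v_2$ and record their regularity; identify $z$ with the mild solution of \eqref{eq:dz}; and prove the tail bound \eqref{eq:zprobest}. For the first, \eqref{eq:dv1} and \eqref{eq:dv2} are affine linear SPDEs for the $C_0$-semigroup $P(t)$ generated by $\mathcal{L}$ (with additive noise, and an additional deterministic-type forcing in the case of $v_2$), so existence, uniqueness, and the representations \eqref{eq:v1}--\eqref{eq:v2} follow from standard linear SPDE theory once the inhomogeneities are seen to be integrable; uniqueness is immediate from linearity. I would read off the regularity $v_1,v_2\in C([0,T];L^2_x)\cap L^r(0,T;L^p_x)$ from Propositions~\ref{prop:strichartzPP} and~\ref{prop:strichartzPP0}, splitting $P=P\Pi+P\Pi^0$: for $v_1$ apply \eqref{eq:PPstrichhom}, \eqref{eq:PP0strichhom} to $P(\cdot)v_{1,0}$ and the stochastic estimates \eqref{eq:PPstrichstoch}, \eqref{eq:PP0strichstoch} to $\int_0^\cdot P(\cdot-t')iu^*\Phi\,\d W$, using $\lVert u^*\Phi\rVert_{\mathcal{L}_2(L^2(\R;\R);L^2_x)}=\beta\lVert u^*\rVert_{L^2_x}$ from \eqref{eq:hilbertschmidteq}; since $u^*$ is deterministic and Schwartz these bounds are finite for every $q\in[2,\infty)$, so $v_1$ lies a.s.\ in every admissible $L^r(0,T;L^p_x)$ (in particular $L^8(0,T;L^4_x)$). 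For $v_2$ one additionally uses $\lvert\{u^*,v_1,v_1\}\rvert\le 3\lvert u^*\rvert\lvert v_1\rvert^2$ and $v_1\in L^2(0,T;L^4_x)$ to put the forcing $i\kappa\{u^*,v_1,v_1\}-\tfrac12\beta^2u^*$ in $L^1(0,T;L^2_x)$ a.s., then applies \eqref{eq:PPstrichconv}, \eqref{eq:PP0strichconv} together with the stochastic estimates for $\int_0^\cdot P(\cdot-t')iv_1\Phi\,\d W$. The second assertion is bookkeeping: the ansatz \eqref{eq:ansatz1} and the expansion \eqref{eq:dubig}--\eqref{eq:defR} are arranged precisely so that $z\coloneqq u-u^*-\sigma v_1-\sigma^2v_2$ solves \eqref{eq:dz} when $v_1,v_2$ solve \eqref{eq:dv1v2}; to make this rigorous at the level of mild solutions one uses that $\mathcal{L}-i\Delta=L+i\kappa\{u^*,u^*,\cdot\,\}$ is a bounded operator on $L^2_x$, so that the $S(\cdot)$-mild form \eqref{eq:spfnlsmild} of $u$ is equivalent to a $P(\cdot)$-mild form, and subtracts the $P(\cdot)$-mild forms of $u^*,\sigma v_1,\sigma^2v_2$, re-expanding the cubic term by the triple bracket as in \eqref{eq:dubig}. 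The regularity of $z$ then follows from Theorem~\ref{thm:wellposed} and the first step.

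The substance is the third assertion. I would first reduce to the pair $(r,p)=(6,6)$; a general admissible pair follows from one more Strichartz estimate on the mild equation for $z$ once $z$ is known to be small in $L^\infty(0,\cdot;L^2_x)\cap L^6(0,\cdot;L^6_x)$. Set $\tau\coloneqq\min\{\tau_{v_1},\tau_{v_2},\tau_z\}$, so on $[0,\tau]$ one has pathwise $\lVert\sigma v_1\rVert\le\eps$, $\lVert\sigma^2v_2\rVert\le\eps^2$, $\lVert z\rVert\le c_1\eps^3$ in $L^\infty(0,\cdot;L^2_x)\cap L^6(0,\cdot;L^6_x)$ (hence, by interpolating $L^4_x$ between $L^2_x$ and $L^6_x$, correspondingly in $L^2(0,\cdot;L^4_x)$). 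Multiplying all coefficients in the $P(\cdot)$-mild form of \eqref{eq:dz} by $\mathbbm{1}_{\{t'\le\tau\}}$ defines a process $\tilde z$ agreeing with $z$ on $[0,\tau]$; setting $\tilde\Theta\coloneqq\lVert\tilde z\rVert_{L^\infty(0,T;L^2_x)\cap L^6(0,T;L^6_x)}$ one has $\Theta(\tau)\coloneqq\lVert z\rVert_{L^\infty(0,\tau;L^2_x)\cap L^6(0,\tau;L^6_x)}\le\tilde\Theta$. Applying the $P(\cdot)\Pi$- and $P(\cdot)\Pi^0$-Strichartz estimates to $\tilde z$ with the drift estimated in $L^1(0,T;L^2_x)$, I would bound each triple-bracket term of $R$ in \eqref{eq:defR} on $\{t'\le\tau\}$ by H\"older's inequality and the interpolation $\lVert\cdot\rVert_{L^2(0,T;L^4_x)}\le C_T\lVert\cdot\rVert_{L^\infty(0,T;L^2_x)\cap L^6(0,T;L^6_x)}$: terms with no factor of $z$ are $\mathcal{O}(\eps^3)$ pathwise, terms linear in $z$ carry a coefficient $\mathcal{O}(\eps)$, and terms of degree $\ge 2$ in $z$ carry a coefficient $\mathcal{O}(c_1\eps^3)$ (using $\lVert z\rVert\le c_1\eps^3$ on the support); the $\beta^2$-drift terms contribute $\mathcal{O}(\sigma^2\eps)$ plus an $\mathcal{O}(\sigma^2)$ multiple of $\tilde\Theta$. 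For the stochastic terms, \eqref{eq:PPstrichstoch} and \eqref{eq:PP0strichstoch} give, in $L^q_\Omega$, a contribution $\mathcal{O}(\sqrt q\,\sigma\eps^2)$ from the $\sigma^3v_2$-term and an $\mathcal{O}(\sqrt q\,\sigma)$ multiple of $\E[\tilde\Theta^q]^{1/q}$ from the $\sigma z$-term. Collecting, choosing $\eps'$ small and restricting to $q\lesssim\sigma^{-2}$, all $\tilde\Theta$-contributions are absorbed, leaving $\E[\tilde\Theta^q]^{1/q}\le C(\eps^3+\sqrt q\,\sigma\eps^2)$ for $2\le q\lesssim\sigma^{-2}$. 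Since $\{\tau_z<\min\{\tau_{v_1},\tau_{v_2}\}\}\subseteq\{\Theta(\tau)\ge c_1\eps^3\}\subseteq\{\tilde\Theta\ge c_1\eps^3\}$ by continuity of $z$, Markov's inequality (as in Lemma~\ref{lem:tail}) gives $\PP[\tau_z<\min\{\tau_{v_1},\tau_{v_2}\}]\le(c_1\eps^3)^{-q}\E[\tilde\Theta^q]\le(2C/c_1+2C\sqrt q\,\sigma/(c_1\eps))^q$; taking $c_1\coloneqq 8C$ and $q\coloneqq\max\{2,\eps^2\sigma^{-2}\}$, so that $\sqrt q\,\sigma\le\eps$, yields the bound $2^{-q}\le\exp(-c_2\eps^2\sigma^{-2})$ with $c_2=\ln 2$, which is \eqref{eq:zprobest}.

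The hard part will be the estimate in the third step: I must check that \emph{every} term of $R$ in \eqref{eq:defR}, together with the remaining drift and diffusion terms of \eqref{eq:dz}, is either $\mathcal{O}(\eps^3)$ pathwise on the stopped interval or carries a coefficient small enough to be absorbed, all while tracking the $\sqrt q$ losses of the stochastic Strichartz estimates. The exponent $\eps^2\sigma^{-2}$ in \eqref{eq:zprobest} is exactly the largest $q$ for which the linear-in-$z$ stochastic term can be absorbed, so the argument is essentially sharp, and the constants $c_1$, $\eps'$, $c_2$ must be fixed in that order to close the bootstrap.
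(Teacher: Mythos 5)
Your proposal is correct and follows essentially the same route as the paper: the same construction of $v_1,v_2$ via linear SPDE theory and the $P(\cdot)\Pi$/$P(\cdot)\Pi^0$ Strichartz estimates, the same mild-equation bootstrap for $z$ on $[0,\tau]$ with $\tau=\min\{\tau_{v_1},\tau_{v_2},\tau_z\}$, and the same $\mathcal{O}(\sqrt{q})$ stochastic Strichartz moments combined with Markov's inequality at $q\sim\eps^2\sigma^{-2}$ (the content of Lemma~\ref{lem:tail}). The only deviations are cosmetic: you absorb the $\sigma z$ noise and drift contributions in $L^q_{\Omega}$ (hence the restriction $q\lesssim\sigma^{-2}$), whereas the paper bounds the truncated integrand pathwise by a constant and applies the tail lemma directly to $T_3$; and you reduce to $(6,6)$ and recover general admissible pairs by a second Strichartz pass, whereas the paper runs the bootstrap for all $p\in[6,\infty)$ and obtains $p\in[2,6)$ by interpolation.
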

\begin{remark}
It would be sufficient in \eqref{eq:asymtimes} to control $v_1$ and $v_2$ in a slightly weaker norm.
However, the choice of $L^{\infty}(0,t;L^2_x) \cap L^6(0,t;L^6_x)$ permits a more convenient proof, and we will be able to easily control $v_1$ and $v_2$ in this norm due to the Strichartz estimates.
\end{remark}
\begin{remark}
Theorem~\ref{thm:asym} by itself does not imply any orbital stability of the solitary wave. In fact, the deterministic stability result (Theorem~\ref{thm:Lprops}) is not necessary to prove Theorem~\ref{thm:asym} (even though we use it indirectly via Proposition~\ref{prop:strichartzPP}).
\end{remark}
The following theorem is a first-order variant of Theorem~\ref{thm:asym}.
The proof is a strictly simpler version of that of Theorem~\ref{thm:asym}, so we choose to omit it.
\begin{theorem}
\label{thm:asym2}
Consider the setting of Theorem~\ref{thm:asym} with $v_{2,0} = 0$ and define $z'$ via
\begin{align}
    \label{eq:defzprime}
    u(t) \eqqcolon u^* + \sigma v_1(t) + z'(t).
\end{align}
For every $T \in (0,\infty)$ and every admissible pair $(r,p) \neq (4,\infty)$ there exist strictly positive constants $c_1$, $c_2$ and $\eps'$, independent of $v_{1,0}$, such that if we introduce the additional stopping time
\begin{align}
    \label{eq:tauz'}
    \tau_{z'} &\coloneqq \sup \{ t \in [0,T] : \lVert z' \rVert_{L^{\infty}(0,t;L^2_x) \cap L^r(0,t;L^p_x)} \leq c_1 \eps^2 \},
\end{align}
we have the inequality
\begin{align}
    \label{eq:zprimeprobest}
    \mathbb{P}\bigl[\tau_{z'} < \tau_{v_1}\bigr] \leq \exp(- c_2 \eps^2 \sigma^{-2}),
\end{align}
for all $\sigma,\eps$ which satisfy $0 < \sigma \leq \eps \leq \eps'$.
\end{theorem}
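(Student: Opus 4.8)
The plan is to specialize the argument underlying Theorem~\ref{thm:asym}, dropping the second-order correction. \textbf{Step 1: the equation for $z'$.} Writing $w \coloneqq \sigma v_1 + z'$ so that $u = u^* + w$, I would subtract the $S$-mild identities for $u^*$ and for $\sigma v_1$ from the $S$-mild identity for $u$ (Theorem~\ref{thm:wellposed}), expand $\tfrac{1}{3}\{u,u,u\}$ with the triple bracket, and use $\mathcal{L}v_1 = i\Delta v_1 + Lv_1 + i\kappa\{u^*,u^*,v_1\}$ to cancel the linear and first-order terms. This gives
\begin{equation*}
    \d z' = \bigl[\mathcal{L}z' + i\kappa R' - \tfrac{1}{2}\beta^2\sigma^2 u\bigr]\d t - i\sigma\,(\sigma v_1 + z')\,\Phi\d W, \qquad z'(0) = 0,
\end{equation*}
with $R' \coloneqq \{u^*,w,w\} + \tfrac{1}{3}\{w,w,w\}$, every term of which is at least quadratic in $w$. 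One then checks directly (using the Duhamel identity relating the semigroups $S$ and $P$, as in the proof of Proposition~\ref{prop:strichartzPP}) that $z'$ satisfies the $P$-mild formula
\begin{equation*}
    z'(t) = \int_0^t P(t-t')\bigl[i\kappa R'(t') - \tfrac{1}{2}\beta^2\sigma^2 u(t')\bigr]\d t' - i\sigma\int_0^t P(t-t')(\sigma v_1(t') + z'(t'))\,\Phi\d W(t').
\end{equation*}
Existence and path regularity of $z'$ require no work: $u \in C([0,T];L^2_x)\cap L^r(0,T;L^p_x)$ by Theorem~\ref{thm:wellposed}, $v_1$ lies in the same space by Theorem~\ref{thm:asym}, and $u^*$ is smooth with rapidly decaying derivatives, so $z' = u - u^* - \sigma v_1$ inherits this regularity.

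\textbf{Step 2: the bootstrap.} I would fix $(r,p) = (6,6)$ first and work on $[0,\tau^*]$ with $\tau^* \coloneqq \tau_{v_1} \wedge \tau_{z'}$. On $\{\tau_{z'} < \tau_{v_1}\}$ (outside of which there is nothing to prove) one has $\tau^* = \tau_{z'} > 0$, hence $\|v_{1,0}\|_{L^2_x} \le \sigma^{-1}\eps$ and therefore $\|u\|_{L^\infty(0,T;L^2_x)} \le \|u^*\|_{L^2_x} + \eps'$ by \eqref{eq:apriori}; moreover $\|\sigma v_1\|_{L^6(0,\tau^*;L^6_x)} \le \eps$, and $\|z'\|_{L^\infty(0,\tau^*;L^2_x)\cap L^6(0,\tau^*;L^6_x)} \le c_1\eps^2$ with equality at $t=\tau_{z'}$ by continuity. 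To the drift term I would apply the convolution Strichartz estimates \eqref{eq:PPstrichconv} and \eqref{eq:PP0strichconv}, bounding it by a constant times $\kappa\|R'\|_{L^1(0,\tau^*;L^2_x)} + \tfrac12\beta^2\sigma^2\|u\|_{L^1(0,\tau^*;L^2_x)}$; estimating each summand of $R'$ by Hölder in space (so $\|\{a,b,c\}\|_{L^2_x} \lesssim \|a\|_{L^6_x}\|b\|_{L^6_x}\|c\|_{L^6_x}$) and in time, and singling out $\{u^*,\sigma v_1,\sigma v_1\}$, yields a drift bound $C_0\eps^2 + C_0'\eps^3$, where $C_0$ — gathering the genuinely $\mathcal{O}(\eps^2)$ contributions of $\{u^*,\sigma v_1,\sigma v_1\}$ and of the It\^o correction $\tfrac12\beta^2\sigma^2 u$ (using $\sigma\le\eps$) — is independent of $c_1$, while $C_0'$ may depend on $c_1$. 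For the stochastic term $Z$ I would apply \eqref{eq:PPstrichstoch} and \eqref{eq:PP0strichstoch} to the stopped integrand $\sigma(\sigma v_1 + z')\mathbbm{1}_{[0,\tau^*]}$, whose $L^2_x$-norm is pathwise at most $2\sigma\eps$ on $[0,\tau^*]$, obtaining $\|Z\|_{L^q_{\Omega}(L^\infty(0,T;L^2_x)\cap L^r(0,T;L^p_x))} \le C_1\sqrt{q}\,\sigma\eps$ for every $q \in [2,\infty)$, with $C_1$ independent of $c_1$.

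\textbf{Step 3: closing, and the main difficulty.} Choosing $c_1$ large (say $c_1 \ge 4C_0 + 4\sqrt{2}\,C_1$) and then $\eps'$ small enough that $C_0'\eps' \le \tfrac14 c_1$, the drift part is at most $\tfrac12 c_1\eps^2$; hence on $\{\tau_{z'} < \tau_{v_1}\}$ the equality $\|z'\|_{L^\infty(0,\tau_{z'};L^2_x)\cap L^r(0,\tau_{z'};L^p_x)} = c_1\eps^2$ forces $\|Z\|_{L^\infty(0,T;L^2_x)\cap L^r(0,T;L^p_x)} \ge \tfrac12 c_1\eps^2$. Feeding $\|Z\|_{L^q_{\Omega}} \le C_1\sqrt{q}\,\sigma\eps$ into Markov's inequality and optimizing over $q$ — exactly as in the proof of Lemma~\ref{lem:tail}, with optimal exponent $q \sim (c_1/C_1)^2\,\eps^2\sigma^{-2} \ge 2$ (by $\eps\ge\sigma$ and the size of $c_1$) — gives $\PP[\tau_{z'} < \tau_{v_1}] \le \PP[\|Z\| \ge \tfrac12 c_1\eps^2] \le \exp(-c_2\eps^2\sigma^{-2})$ with $c_2 \sim (c_1/C_1)^2$. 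The crux, and the step I expect to need the most care, is precisely this one: one must observe that the stochastic part $Z$ of $z'$ lives two orders of $\sigma$ below the $\mathcal{O}(\eps)$-scale of $z'$ — one power from the $\sigma$ multiplying the noise in \eqref{eq:spfnls1}, one from its integrand $\sigma v_1 + z'$ being $\mathcal{O}(\eps)$ — so that reaching the deterministically driven threshold $c_1\eps^2$ requires a deviation of $Z$ of order $\eps/\sigma \ge 1$ above its typical size $\sigma\eps$, which the $\sqrt q$-form of the stochastic Strichartz estimate converts (after enlarging $c_1$) into the Gaussian exponent $\eps^2\sigma^{-2}$. Finally, the passage from $(r,p)=(6,6)$ to a general admissible $(r,p)\neq(4,\infty)$ is routine: on $[0,\tau_{v_1}\wedge\tau_{z'}^{(6,6)}]$ one already controls $\|z'\|_{L^6(L^6_x)}$, so re-estimating the same mild formula in the $L^\infty(L^2_x)\cap L^r(L^p_x)$-norm (again via \eqref{eq:PPstrichconv}, \eqref{eq:PP0strichconv}, \eqref{eq:PPstrichstoch}, \eqref{eq:PP0strichstoch} and Lemma~\ref{lem:tail}) reproduces the bound with an $(r,p)$-dependent constant $c_1$.
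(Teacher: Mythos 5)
Your proposal is correct and follows essentially the route the paper intends: the paper omits this proof precisely because it is a strictly simpler, first-order version of the proof of Theorem~\ref{thm:asym}, and your derivation of the $P$-mild equation for $z'$, the drift bounds via \eqref{eq:PPstrichconv}, \eqref{eq:PP0strichconv}, the pathwise $\mathcal{O}(\sigma\eps)$ bound on the stopped stochastic integrand fed into \eqref{eq:PPstrichstoch}, \eqref{eq:PP0strichstoch}, and the conclusion via Lemma~\ref{lem:tail} with $c_1$ chosen large and $\eps'$ small reproduce exactly that argument. The only cosmetic point is that \eqref{eq:apriori} carries the factor $e^{-\epsilon(\gamma-\mu)t}\geq 1$ (since $\mu>\gamma$), so your bound on $\lVert u\rVert_{L^{\infty}(0,T;L^2_x)}$ should include the harmless constant $e^{\epsilon(\mu-\gamma)T}$ (or be obtained directly from the decomposition $u=u^*+\sigma v_1+z'$ on $[0,\tau^*]$), which changes nothing in the argument.
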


\subsection{Orbital stability}
\label{sec:orbital}
Theorem~\ref{thm:asym} implies that on any fixed timescale, we have the expansion ${u = u^* + \sigma v_1 + \sigma^2 v_2 + \mathcal{O}(\sigma^3)}$.
However, from \eqref{eq:v1v2} it can be seen that in general, the processes $v_1$ and $v_2$ grow with time.
To show orbital stability of the solitary wave on long timescales, we need to control this growth.
Therefore, we first decompose $v_1$ and $v_2$ in the following way:
\begin{subequations}
\label{eq:ansatz2}
\begin{align}
    \label{eq:ansatzv1}
    v_1 &= a_1 u^*_x + w_1, \\
    \label{eq:ansatzv2}
    v_2 &= a_2 u^*_x + \tfrac{1}{2}a_1^2 u^*_{xx} + w_2,
\end{align}
\end{subequations}
where $a_1$ and $a_2$ are (real-valued) stochastic processes which we will specify later, at which point \eqref{eq:ansatz2} determines $w_1$ and $w_2$.
Substituting \eqref{eq:ansatz2} into \eqref{eq:ansatz1} and using Theorem~\ref{thm:asym}, we get
\begin{align*}
    u = u^* + \sigma a_1 u^*_x + \sigma^2 a_2 u^*_x + \tfrac{1}{2}\sigma^2 a_1^2 u^*_{xx} + \sigma w_1 + \sigma^2 w_2 + \mathcal{O}(\sigma^3).
\end{align*}
The first four terms on the right-hand side exactly constitute a Taylor expansion of ${u^*(x + \sigma a_1 + \sigma^2 a_2)}$ to second order in $\sigma$, and thus we have
\begin{align}
    \label{eq:ansatz3}
    u = u^*(x + \sigma a_1 + \sigma^2 a_2) + \sigma w_1 + \sigma^2 w_2 + \mathcal{O}(\sigma^3),
\end{align}
still on the same fixed timescale.
We will see that for some particular choice of $a_1$ and $a_2$, the processes $w_1$ and $w_2$ exhibit growth behavior which is much more favorable than that of their counterparts $v_1$ and $v_2$.
This is the statement of Theorem~\ref{thm:relaxation}, which gives explicit expressions for $a_1$ and $a_2$, and characterizes the growth behavior of $w_1$ and $w_2$.
This is made possible by the exponential decay of $P(t)\Pi$ \eqref{eq:Pdecay}, which is essentially the content of the deterministic stability result.

As an example, from \eqref{eq:v1} it is clear that $v_1$ is expected to grow like $\sqrt{t}$ (this can be made rigourous by combining \eqref{eq:ansatzv1}, \eqref{eq:a1def}, and \eqref{eq:w1est}).
On the other hand, from \eqref{eq:w1est} we see that the moments of $w_1$ remain bounded in time.
Thus, the term $a_1 u^*_x$ in \eqref{eq:ansatzv1} fully captures the growth of $v_1$.
Similarly, $v_2$ is expected to grow at a rate of $t^2$, whereas \eqref{eq:w2est} shows that $w_2$ only grows like $t$.

From \eqref{eq:ansatz3} it is then clear that $a_1$ and $a_2$ have an interpretation as the first- and second-order corrections to the phase of the solitary wave.
Additionally, since $\Phi$ and $u^*$ do not depend on $t$ and $\omega$, it can be seen from \eqref{eq:a1def} that $a_1$ is a Brownian motion rescaled by $\lVert \mathcal{P}i u^* \Phi \rVert_{\mathcal{L}_2(L^2(\R;\R);\R)}$ and offset by $\mathcal{P}(v_{1,0})$.
The proofs of Theorem~\ref{thm:relaxation}, Propostion~\ref{prop:longterm}, and Corollary~\ref{cor:longterm} are contained in Section~\ref{sec:prooforbstab}.
\begin{theorem}
    \label{thm:relaxation}
    There exist predictable processes $a_1$, $a_2$, $w_1$, $w_2$, such that \eqref{eq:ansatz2} and the condition
    \begin{align}
        \label{eq:Pi0wicond}
        \Pi^0 w_k = 0, \qquad k \in \{1,2\},
    \end{align}
    both hold.
    The processes $a_1$ and $a_2$ are given by
    \begin{subequations}
    \label{eq:adef}
    \begin{align}
    \label{eq:a1def}
        a_1(t) &= \mathcal{P}\Bigl[v_{1,0} - \int_0^t i u^* \Phi \d W(t') \Bigr], \\
        a_2(t) &= \mathcal{P}\Bigl[v_{2,0} + \int_0^t i \kappa \{u^*, v_1, v_1 \} - \tfrac{1}{2}\beta^2 u^*\d t' 
        - \int_0^t i v_1 \Phi \d W(t') - \tfrac{1}{2}a_1(t)^2 u^*_{xx}\Bigr],
    \end{align}
    \end{subequations}
    and the corresponding $w_1$ and $w_2$ are given by
    \begin{subequations}
    \label{eq:wdef}
    \begin{align}
        \label{eq:w1def}
        w_1 &= P(t)\Pi v_{1,0} - \int_0^t P(t-t')\Pi i u^* \Phi \d W(t') \\
        \label{eq:w2def}
        \begin{split}
        w_2 &= P(t)\Pi v_{2,0} + \int_0^t P(t-t')\Pi (i \kappa \{u^*, v_1, v_1 \} - \tfrac{1}{2}\beta^2 u^*)\d t' \\
        &\qquad- \int_0^t P(t-t')\Pi i v_1 \Phi \d W(t') - \tfrac{1}{2}a_1(t)^2 \Pi u^*_{xx}. 
        \end{split}
    \end{align}
    \end{subequations}
    Finally, there exists a constant $C$, such that the estimates
    \begin{subequations}
    \label{eq:west}
    \begin{align}
        \label{eq:w1est}
        \lVert w_1(t) \rVert_{L^q_{\Omega}(L^2_x)} 
            &\leq C \bigl( e^{-at}\lVert v_{1,0}\rVert_{L^q_{\Omega}(L^2_x)} + \sqrt{q} \beta \min \{t^{\frac{1}{2}}, 1 \}  \bigr), \\
        \label{eq:w2est}
        \lVert w_2(t) \rVert_{L^q_{\Omega}(L^2_x)}
            &\leq C\bigl( e^{-at} \lVert v_{2,0} \rVert_{L^q_{\Omega}(L^2_x)} + \lVert v_{1,0} \rVert_{L^{2q}_{\Omega}(L^2_x)}^2
                    + q  \beta^2 t\bigr),
    \end{align}
    hold for every $q \in [2,\infty)$, $v_{1,0} \in L^{2q}_{\Omega}(L^2_x)$, $v_{2,0} \in L^q_{\Omega}(L^2_x)$, $t \in [0,\infty)$, and $\phi \in L^2(\R;\R)$ (recall \eqref{eq:defphi}).
    \end{subequations}
\end{theorem}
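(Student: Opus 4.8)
The plan is, first, to show that the constraints \eqref{eq:ansatz2} and \eqref{eq:Pi0wicond} determine $a_1,a_2,w_1,w_2$ uniquely; second, to rewrite these implicit definitions as the explicit formulas \eqref{eq:adef}--\eqref{eq:wdef}; and third, to prove the moment bounds \eqref{eq:west}. For the first step, apply $\Pi^0$ to \eqref{eq:ansatzv1}: since $u^*_x$ spans the range of $\Pi^0$ we have $\Pi^0 u^*_x = u^*_x$, so $\mathcal P(v_1)u^*_x = \Pi^0 v_1 = a_1 u^*_x + \Pi^0 w_1 = a_1 u^*_x$ by Proposition~\ref{prop:P} and \eqref{eq:Pi0wicond}. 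Hence necessarily $a_1 = \mathcal P(v_1)$ and $w_1 = v_1 - \mathcal P(v_1)u^*_x = \Pi v_1$; conversely these choices satisfy \eqref{eq:ansatzv1} and $\Pi^0 w_1 = \Pi^0\Pi v_1 = 0$. Repeating this with \eqref{eq:ansatzv2}, now also using $\Pi^0 u^*_{xx} = \mathcal P(u^*_{xx})u^*_x$, forces $a_2 = \mathcal P\bigl(v_2 - \tfrac12 a_1^2 u^*_{xx}\bigr)$ and $w_2 = \Pi v_2 - \tfrac12 a_1^2\Pi u^*_{xx}$.

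For the second step, recall that $\Pi^0$, being a Riesz spectral projection for $\mathcal L$, commutes with the resolvent and hence with the semigroup $P(t)$; combined with $P(t)\Pi^0 = \Pi^0$ (shown in the proof of Proposition~\ref{prop:strichartzPP0}) this gives $\Pi P(t) = P(t)\Pi$ and $\mathcal P\circ P(t) = \mathcal P$ as bounded operators on $L^2_x$. Boundedness of $\Pi$ and $\mathcal P$ also lets them pass through Bochner and It\^o integrals. Substituting the mild representations \eqref{eq:v1}--\eqref{eq:v2} from Theorem~\ref{thm:asym} into the expressions obtained in the first step and moving $\Pi$ and $\mathcal P$ inside the integrals then produces exactly \eqref{eq:adef} and \eqref{eq:wdef}. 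Predictability of the four processes is immediate, since each is a combination of $\mathcal F_0$-measurable data, Bochner integrals, and stochastic convolutions (which admit predictable, in fact continuous, modifications).

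For the bound \eqref{eq:w1est}, apply the triangle inequality to \eqref{eq:w1def}: the first term is bounded in $L^q_\Omega(L^2_x)$ by $Me^{-at}\|v_{1,0}\|_{L^q_\Omega(L^2_x)}$ via \eqref{eq:Pdecay}, and the stochastic convolution has a deterministic integrand, hence is Gaussian in $L^2_x$; by the It\^o isometry together with Gaussian moment comparison (giving the $\mathcal O(\sqrt q)$ dependence) its $L^q_\Omega(L^2_x)$-norm is at most $C\sqrt q\bigl(\int_0^t\|P(t-t')\Pi i u^*\Phi\|_{\mathcal L_2}^2\,\d t'\bigr)^{1/2}$; using $\|P(t-t')\Pi i u^*\Phi\|_{\mathcal L_2}\le Me^{-a(t-t')}\beta\|u^*\|_{L^2_x}$ from \eqref{eq:Pdecay} and \eqref{eq:hilbertschmidteq}, and $\int_0^t e^{-2a(t-t')}\,\d t'\le\min\{t,(2a)^{-1}\}$, produces the factor $\min\{t^{1/2},1\}$ and hence \eqref{eq:w1est}. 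For \eqref{eq:w2est}, estimate the four terms of \eqref{eq:w2def} separately. The terms $P(t)\Pi v_{2,0}$ and $\int_0^t P(t-t')\Pi(-\tfrac12\beta^2 u^*)\,\d t'$ are handled directly with \eqref{eq:Pdecay} (for the latter, $\int_0^t e^{-a(t-t')}\,\d t'\le C\min\{t,1\}\le Ct$). For the last term $\tfrac12 a_1(t)^2\Pi u^*_{xx}$, use $\Pi u^*_{xx}\in L^2_x$ and the moment bound $\|a_1(t)\|_{L^{2q}_\Omega}\le C(\|v_{1,0}\|_{L^{2q}_\Omega(L^2_x)}+\sqrt q\beta t^{1/2})$, which follows from \eqref{eq:a1def} ($\mathcal P$ bounded, the It\^o isometry for the scalar stochastic integral, and $\|\mathcal P\circ i u^*\Phi\|_{\mathcal L_2}\le\|\mathcal P\|\beta\|u^*\|_{L^2_x}$). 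The stochastic convolution $\int_0^t P(t-t')\Pi i v_1\Phi\,\d W(t')$ is controlled with \eqref{eq:PPstrichstoch} (continuous version, $(r,p)=(\infty,2)$) together with $\|v_1\|_{L^q_\Omega(L^q(0,t;L^2_x))}\le t^{1/q}\|v_1\|_{L^q_\Omega(C([0,t];L^2_x))}$ and a Strichartz bound $\|v_1\|_{L^q_\Omega(C([0,t];L^2_x))}\le C(\|v_{1,0}\|_{L^q_\Omega(L^2_x)}+\sqrt q\beta t^{1/2})$ obtained from Propositions~\ref{prop:strichartzPP}--\ref{prop:strichartzPP0} applied to \eqref{eq:v1}.

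The crux is the nonlinear drift $\int_0^t P(t-t')\Pi i\kappa\{u^*,v_1,v_1\}\,\d t'$ in \eqref{eq:w2def}. Estimating it naively in terms of $\|v_1\|^2$ introduces spurious powers of $t$ coming from the non-decaying component $\Pi^0 v_1 = a_1 u^*_x$ (which grows like a Brownian motion), so I first split $v_1 = a_1 u^*_x + w_1$ and expand $\{u^*,v_1,v_1\} = a_1^2\{u^*,u^*_x,u^*_x\} + 2a_1\{u^*,u^*_x,w_1\} + \{u^*,w_1,w_1\}$, using that the triple bracket is real-homogeneous in each slot. Since $u^*$ and all its derivatives are rapidly decaying, $\{u^*,u^*_x,u^*_x\}\in L^2_x$, $u^*u^*_x\in L^\infty_x$, and $u^*\in L^6_x$, so after applying \eqref{eq:Pdecay} and H\"older in time against $e^{-a(t-t')}$ these three pieces are bounded in $L^q_\Omega(L^2_x)$ by $C\int_0^t e^{-a(t-t')}\|a_1(t')\|_{L^{2q}_\Omega}^2\,\d t'$, $C\int_0^t e^{-a(t-t')}\|a_1(t')\|_{L^{2q}_\Omega}\|w_1(t')\|_{L^{2q}_\Omega(L^2_x)}\,\d t'$, and $C\|w_1\|_{L^{2q}_\Omega(L^6(0,t;L^6_x))}^2$ respectively, where $\|w_1\|_{L^6(0,t;L^6_x)}$ is controlled via \eqref{eq:PPstrichhom} and \eqref{eq:PPstrichstoch} applied to \eqref{eq:w1def}, $\|w_1(t')\|_{L^{2q}_\Omega(L^2_x)}$ via the already-proven \eqref{eq:w1est}, and $\|a_1(t')\|_{L^{2q}_\Omega}$ as above. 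Collecting all contributions to $w_2$, they take the form $C\bigl(e^{-at}\|v_{2,0}\|_{L^q_\Omega(L^2_x)}+\|v_{1,0}\|_{L^{2q}_\Omega(L^2_x)}^2+q\beta^2 t+\sqrt q\beta\|v_{1,0}\|_{L^{2q}_\Omega(L^2_x)}t^{1/2}\bigr)$; absorbing the last (cross) term via Young's inequality and using the embedding $L^{2q}_\Omega\hookrightarrow L^q_\Omega$ yields \eqref{eq:w2est}. Beyond this bookkeeping, the only point requiring care is keeping track of the precise $\sqrt q$-versus-$q$ and $t^{1/2}$-versus-$t$ dependence throughout.
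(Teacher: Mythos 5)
Your proposal is correct and takes essentially the same route as the paper: decompose using Proposition~\ref{prop:P} and the condition $\Pi^0 w_k = 0$, substitute the mild formulas \eqref{eq:v1v2} using $\Pi P(t) = P(t)\Pi$, then prove \eqref{eq:west} by splitting $v_1 = a_1 u^*_x + w_1$ in the quadratic drift, controlling $a_1$ and $w_1$ through the decay \eqref{eq:Pdecay} and Strichartz-type bounds, and absorbing cross terms with Young's inequality. Your only deviations are harmless substitutes for the paper's tools: Gaussianity and the It\^o isometry in place of Lemma~\ref{lem:youngdecay} for \eqref{eq:w1est}, and estimating the stochastic convolution against $v_1$ directly via \eqref{eq:PPstrichstoch} instead of splitting $v_1$ there as well.
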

Theorems~\ref{thm:asym2} and~\ref{thm:relaxation} then allow us to show the following proposition.
\begin{proposition}
\label{prop:longterm}
Consider equation \eqref{eq:spfnls1} with initial data $u(0) = u^* + v_0$, where $v_0$ is an $L^2_x$-valued $\mathcal{F}_0$-measurable random variable.
There exist strictly positive constants $T$, $\tilde{c}_1$, $\tilde{c}_2$, $\lambda$, $\eps'$ such that the estimates
\begin{subequations}
\label{eq:longtermests}
\begin{align}
    \label{eq:resetest1}
    \mathbb{P}\bigl[\lVert u(T) - u^*(x+\sigma a_1(T)\rVert_{L^2_x} \geq \tilde{c}_1 \eps\bigr] \leq 4\exp(-\tilde{c}_2 \sigma^{-2}\eps^2), \\
    \label{eq:resetest2}
    \mathbb{P}\bigl[\lVert u(t) - u^*(x + \sigma a_1) \rVert_{L^{\infty}(0,T;L^2_x)} \geq \eps\bigr] \leq 4\exp(-\tilde{c}_2 \sigma^{-2}\eps^2),
\end{align}
\end{subequations}
hold for every $0 < \lambda \sigma \leq \eps \leq \eps'$, and every $v_0$ which satisfies $\lVert v_0 \rVert_{L^2_x} \leq \tilde{c}_1\eps$, $\mathbb{P}$-a.s.
\end{proposition}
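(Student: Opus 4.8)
The plan is to feed the initial data into the first-order asymptotic expansion (Theorem~\ref{thm:asym2}) and the phase decomposition (Theorem~\ref{thm:relaxation}), then undo the decomposition with a Taylor expansion of $u^*$ and control the remaining terms on a high-probability event. Since $u(0)=u^*+v_0$, I would set $v_{1,0}\coloneqq\sigma^{-1}v_0$ and $v_{2,0}\coloneqq 0$, so that the hypothesis $\lVert v_0\rVert_{L^2_x}\le\tilde c_1\eps$ becomes $\lVert v_{1,0}\rVert_{L^2_x}\le\tilde c_1\eps/\sigma$, $\mathbb P$-a.s. Then Theorem~\ref{thm:asym2} yields $u(t)=u^*+\sigma v_1(t)+z'(t)$ with $\mathbb P[\tau_{z'}<\tau_{v_1}]\le\exp(-c_2\eps^2\sigma^{-2})$, and Theorem~\ref{thm:relaxation} (with $v_{2,0}=0$, so $a_2,w_2$ play no role) yields $v_1=a_1u^*_x+w_1$ with $\Pi^0 w_1=0$, where by \eqref{eq:a1def} one has $a_1(t)=\mathcal P(v_{1,0})+\rho B(t)$ for a standard Brownian motion $B$ and $\rho=\lVert\mathcal P i u^*\Phi\rVert_{\mathcal L_2(L^2(\R;\R);\R)}<\infty$ (finiteness from \eqref{eq:hilbertschmidteq} and boundedness of $\mathcal P$). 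Taylor's theorem with integral remainder gives $u^*(\cdot+h)-u^*-h u^*_x=\int_0^h(h-s)u^*_{xx}(\cdot+s)\d s$, whose $L^2_x$-norm is at most $\tfrac12\lVert u^*_{xx}\rVert_{L^2_x}h^2$ by translation invariance, so that pointwise in $t$ and $\omega$,
\begin{equation*}
    \lVert u(t)-u^*(\cdot+\sigma a_1(t))\rVert_{L^2_x}\le\sigma\lVert w_1(t)\rVert_{L^2_x}+\lVert z'(t)\rVert_{L^2_x}+\tfrac12\lVert u^*_{xx}\rVert_{L^2_x}\,\sigma^2 a_1(t)^2.
\end{equation*}

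The remaining work is to bound the three terms on the right on a good event. On $\{\tau_{v_1}=T\}\cap\{\tau_{z'}\ge\tau_{v_1}\}$ one has $\tau_{z'}=T$, hence $\lVert z'\rVert_{L^\infty(0,T;L^2_x)}\le c_1\eps^2$; the complementary event $\{\tau_{v_1}<T\}$ is contained in $\{\lVert v_1\rVert_{L^\infty(0,T;L^2_x)\cap L^6(0,T;L^6_x)}>\sigma^{-1}\eps\}$, and I would bound its probability by writing $v_1=P(\cdot)v_{1,0}-\int_0^\cdot P(\cdot-t')iu^*\Phi\,\d W$, applying the Strichartz estimates of Propositions~\ref{prop:strichartzPP} and~\ref{prop:strichartzPP0} (using that $iu^*$ is constant in time and in $\omega$) to obtain $\lVert v_1\rVert_{L^q_\Omega(L^\infty(0,T;L^2_x)\cap L^6(0,T;L^6_x))}\le C\lVert v_{1,0}\rVert_{L^2_x}+C_T\sqrt q\,\beta$, and then invoking Markov's inequality with $q\sim\eps^2\sigma^{-2}$. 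The same computation applied to $w_1=P(\cdot)\Pi v_{1,0}-\int_0^\cdot P(\cdot-t')\Pi iu^*\Phi\,\d W$ controls $\sigma\lVert w_1(t)\rVert_{L^2_x}$, now exploiting the exponential decay $\lVert P(t)\Pi v_{1,0}\rVert_{L^2_x}\le Me^{-at}\lVert v_{1,0}\rVert_{L^2_x}$ from \eqref{eq:Pdecay}; note that at time $T$ the deterministic contribution of $w_1$ is $\le Me^{-aT}\tilde c_1\eps$, which can be made an arbitrarily small fraction of $\tilde c_1\eps$ by taking the resetting time $T$ large. Finally, $\sigma|a_1(t)|\le\sigma|\mathcal P(v_{1,0})|+\sigma\rho\sup_{s\in[0,T]}|B(s)|\le C\tilde c_1\eps+\sigma\rho\sup_{s\in[0,T]}|B(s)|$, and the reflection principle gives $\mathbb P[\sigma\rho\sup_{s\in[0,T]}|B(s)|>\delta\eps]\le 4\exp(-\delta^2\eps^2/(2T\sigma^2\rho^2))$ — this is where the prefactor $4$ enters — so that $\sigma^2 a_1(t)^2=\mathcal O(\eps^2)$ uniformly on $[0,T]$ on a good event.

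Putting these pieces together on the intersection of the good events and applying the union bound gives both \eqref{eq:resetest1} and \eqref{eq:resetest2}, provided the free parameters are fixed in the right order: first $\tilde c_1$ small (so every term proportional to $\tilde c_1$ sits below $\tfrac14\eps$, respectively below a fixed fraction of $\tilde c_1\eps$); then $T$ large (so the relaxing contributions $P(\cdot)\Pi v_{1,0}$ and the deterministic part of $w_1(T)$ decay below the required fraction of $\tilde c_1\eps$); then the proportionality constant in $q\sim\eps^2\sigma^{-2}$ small (so each Markov estimate yields a bound like $2^{-q}$); then $\eps'$ small (so all $\mathcal O(\eps^2)$ contributions become $o(\eps)$); and finally $\lambda$ large (so that $q\ge 2$ and so that the numerical prefactors from the union bound can be absorbed into the single factor $4$ at the cost of halving the exponent, which determines $\tilde c_2$). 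The hard part is precisely this bookkeeping: reconciling the need for $\tilde c_1$ to be small with the target $\tilde c_1\eps$ on the left-hand side of \eqref{eq:resetest1} is only possible because every term carrying a factor $\tilde c_1$ there also carries a factor $e^{-aT}$, which is what forces the choice of a sufficiently large resetting time $T$; everything else — the Strichartz bounds with $\mathcal O(\sqrt q)$ constants and the Markov and reflection-principle tail estimates — is routine.
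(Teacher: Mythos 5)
Your proposal is correct and follows essentially the same route as the paper's proof: feed $v_{1,0}=\sigma^{-1}v_0$ into Theorem~\ref{thm:asym2}, decompose $v_1=a_1u^*_x+w_1$ via Theorem~\ref{thm:relaxation}, Taylor-expand $u^*(\cdot+\sigma a_1)$, control each term with Gaussian tails on a good event, and choose $T$ so that $Me^{-aT}$ is a small fraction, closing with a union bound. The only differences are cosmetic: you use the single first-order Taylor bound (in $w_1$ and $a_1^2$) for both \eqref{eq:resetest1} and \eqref{eq:resetest2} and control $a_1$ via its explicit rescaled-Brownian representation and the reflection principle, whereas the paper uses a zeroth-order bound (in $v_1$ and $|a_1|$) for the sup-in-time estimate and obtains the tail bounds for $a_1$, $v_1$, $w_1$ from the stochastic Strichartz estimates \eqref{eq:PPstrichstoch}, \eqref{eq:PP0strichstoch} together with Lemma~\ref{lem:tail}.
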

From the translation invariance of the equation, it is immediate that the previous proposition also holds if we consider an initial condition of the form ${u(0) = u^*(x+a) + v_0}$ for any $a \in \R$. Thus, by \eqref{eq:resetest1} we are at time $T$ in essentially the same situation as at time $0$ (with high probability).
In this way, we can `chain' Proposition~\ref{prop:longterm} to finally obtain the long-term stability result.
\begin{corollary}
\label{cor:longterm}
Let $T,\tilde{c}_1,\tilde{c}_2,\lambda,\eps'$, and $v_0$ be as in Proposition~\ref{prop:longterm}.
Then the estimate
\begin{align}
    \mathbb{P}\Bigl[\sup_{t \in [0,NT]} \inf_a \lVert u(t) - u^*(x + a) \rVert_{L^2_x} \geq \eps\Bigr] \leq 8 N \exp(-\tilde{c}_2 \sigma^{-2}\eps^2)
\end{align}
holds for every $N \in \N$, $0 < \lambda \sigma \leq \eps \leq \eps'$, and every $v_0$ which satisfies ${\lVert v_0 \rVert_{L^2_x} \leq \tilde{c}_1\eps}$, $\mathbb{P}$-a.s.
\end{corollary}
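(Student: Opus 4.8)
The plan is to derive Corollary~\ref{cor:longterm} from Proposition~\ref{prop:longterm} by iterating the short-time estimate over the successive intervals $[NT, (N+1)T]$ and using the translation invariance of the equation to reset the reference wave at each step. Fix $N \in \N$ and $0 < \lambda\sigma \le \eps \le \eps'$, and suppose $\lVert v_0 \rVert_{L^2_x} \le \tilde c_1 \eps$ almost surely. I would first define, recursively, a sequence of shifts: set $A_0 = 0$, and having defined $A_k$, apply Proposition~\ref{prop:longterm} to the solution on $[kT, (k+1)T]$ viewed as an equation with initial datum $u(kT) = u^*(x + A_k) + v_k$, where $v_k \coloneqq u(kT) - u^*(x+A_k)$; this is licit by the Markov property of the mild solution and the translation invariance noted right after Proposition~\ref{prop:longterm}. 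The new shift is then $A_{k+1} \coloneqq A_k + \sigma a_1^{(k)}(T)$, where $a_1^{(k)}$ is the first-order phase process associated to the restarted problem, and the new perturbation is $v_{k+1} = u((k+1)T) - u^*(x + A_{k+1})$.

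The second step is to set up the good events. On the event $G_k \coloneqq \{\lVert v_k \rVert_{L^2_x} \le \tilde c_1 \eps\}$, Proposition~\ref{prop:longterm} applies at stage $k$ (after shrinking $\eps'$ so that $\tilde c_1 \eps \le \eps'$ as well, or simply absorbing constants), and \eqref{eq:resetest1} gives $\PP[G_{k+1}^c \mid \mathcal{F}_{kT}, G_k] \le 4\exp(-\tilde c_2 \sigma^{-2}\eps^2)$ while \eqref{eq:resetest2} controls the supremum of $\lVert u(t) - u^*(x+A_k + \sigma a_1^{(k)}(t-kT))\rVert_{L^2_x}$ over $t \in [kT,(k+1)T]$ by $\eps$ with the same exceptional probability. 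Define $B \coloneqq \bigcap_{k=0}^{N-1}(G_k \cap E_k)$, where $E_k$ is the event in \eqref{eq:resetest2} at stage $k$. Note $G_0$ holds surely by the hypothesis on $v_0$. A union bound over the $2N$ events, each of probability at most $4\exp(-\tilde c_2\sigma^{-2}\eps^2)$, gives $\PP[B^c] \le 8N\exp(-\tilde c_2\sigma^{-2}\eps^2)$, which is exactly the bound claimed.

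The third step is to verify that on $B$ we actually have $\sup_{t\in[0,NT]} \inf_a \lVert u(t) - u^*(x+a)\rVert_{L^2_x} < \eps$. For $t \in [kT,(k+1)T]$ with $k \le N-1$, on $E_k$ we may take $a = A_k + \sigma a_1^{(k)}(t-kT)$ and get $\lVert u(t) - u^*(x+a)\rVert_{L^2_x} \le \eps$, so the infimum over $a$ is $\le \eps$; taking the supremum over all such $t$ and $k$ covers $[0, NT]$. One minor point to handle: \eqref{eq:resetest2} as stated is $\ge \eps$ whereas the corollary asks for $\ge \eps$ as well, so the inclusion $B \subseteq \{\sup \inf \le \eps\}$ is not quite $\{ < \eps\}$; this is harmless since $\{\sup\inf \ge \eps\} \subseteq B^c$ directly, or one replaces $\eps$ by $\eps/2$ in the applications of the proposition and relabels constants. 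I would present it the clean way: show $\{\sup_{t\in[0,NT]}\inf_a \lVert u(t)-u^*(x+a)\rVert_{L^2_x} \ge \eps\} \subseteq \bigcup_{k=0}^{N-1}(G_k^c \cup E_k^c)$ and bound the right-hand side.

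The main obstacle is the bookkeeping around the restart: one must check that Proposition~\ref{prop:longterm}, which is stated for a deterministic-shift initial condition $u^*(x+a)+v_0$ with $v_0$ an $\mathcal{F}_0$-measurable random variable of norm $\le \tilde c_1\eps$ a.s., genuinely applies when the shift $A_k$ and the perturbation $v_k$ are themselves $\mathcal{F}_{kT}$-measurable random variables. This is where the translation invariance remark and the (strong) Markov property of the mild solution do the work: conditionally on $\mathcal{F}_{kT}$ and on $G_k$, the shifted process $\tilde u(s) \coloneqq u(kT+s)$ solves \eqref{eq:spfnls1} with initial datum in the required form and with a norm bound that holds a.s. on $G_k$, and the noise increments $W(kT+\cdot) - W(kT)$ form a fresh cylindrical Wiener process independent of $\mathcal{F}_{kT}$. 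Making this conditioning rigorous — in particular that the exceptional-probability bound in Proposition~\ref{prop:longterm} transfers to the conditional probability given $\mathcal{F}_{kT}$ on $G_k$, uniformly in the $\mathcal{F}_{kT}$-measurable data — is the only nontrivial step; everything else is a union bound and a triangle inequality.
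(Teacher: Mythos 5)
Your proposal is correct and follows essentially the same route the paper takes: the paper's own argument for Corollary~\ref{cor:longterm} is exactly the chaining of Proposition~\ref{prop:longterm} over the intervals $[kT,(k+1)T]$ sketched in the text preceding the corollary, restarting via translation invariance and the Markov property, with the factor $8N$ arising precisely from your $2N$ events (one from \eqref{eq:resetest1}, one from \eqref{eq:resetest2} per interval), each costing $4\exp(-\tilde{c}_2\sigma^{-2}\eps^2)$. Your treatment of the only delicate point --- applying the proposition with $\mathcal{F}_{kT}$-measurable shift and perturbation via conditioning on the good event $G_k$ (formally, a first-failure decomposition, or replacing $v_k$ by $v_k\mathbbm{1}_{G_k}$ and invoking pathwise uniqueness) --- is, if anything, more explicit than the paper's, which treats this step as immediate.
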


\section{Proof of well-posedness}
\label{sec:proofwellposed}
\subsection{Local well-posedness}\label{sec:local}
Following the approach of de Bouard and Debussche in~\cite{de_bouard_stochastic_1999,de_bouard_stochastic_2003} and Hornung in~\cite{hornung_nonlinear_2018}, we first establish well-posedness of a modified version of equation \eqref{eq:spfnlsmild} in which the nonlinear term $\lvert u\rvert^2u$ is truncated. The truncation allows us to control the nonlinearity, which is otherwise not Lipschitz continuous.

We now fix $T_0 \in (0,\infty)$, $s \in [0,\infty)$, $\phi \in L^2(\R;\R) \cap H^{s}_x$, and $(r,p) \neq (4,\infty)$ which satisfies \eqref{eq:defadmissible}.
All of these will remain fixed throughout the proof.
For $T \in (0,\infty)$, we also introduce the following spaces:
\begin{subequations}
\label{eq:defXTYT}
\begin{align}
    \label{eq:defXT}
    X_T^{s} &\coloneqq C([0,T];H^{s}_x)\cap L^r(0,T;H^{s,p}_x), \\
    \label{eq:defYT}
    X_T &\coloneqq C([0,T];L^2_x) \cap L^{6}(0,T;L^6_x).
\end{align}
\end{subequations}
We truncate the nonlinearity in the $L^6(0,T;L^6_x)$-norm, formulate a contraction argument in the space $L^2_{\Omega}(X_T)$, and additionally show that the fixed-point iteration maps a ball in $L^2_{\Omega}(X^{s}_T)$ into itself to obtain the additional regularity.
Since the pairs $(r,p)$ and $(\infty,2)$ both satisfy \eqref{eq:defadmissible}, we can freely replace the norms on the left-hand side of \eqref{eq:strich} by the $X_T^{s}$-norm, and will do so throughout.

For $R \geq 1$, let $\theta_R$ be the function which takes the value $1$ on $[0,R]$, interpolates linearly between $1$ and $0$ on $[R,2R]$ and is identically zero on $[2R,\infty)$. Also define
\begin{align*}
    (\Theta_R(u))(t) &\coloneqq \theta_R(\lVert u \rVert_{L^6(0,t;L^6_x)})u(t).
\end{align*}
Notice that $\Theta_R$ preserves adaptedness of $u$.
The truncated mild equation now takes the form
\begin{equation}
\label{eq:truncated}
\begin{aligned}
    u(t) &= S(t)u_0-\int_0^t S(t-t')(i\nu u(t') +\epsilon(\gamma u(t') -\mu \overline{u}(t')) + \tfrac{1}{2}\beta^2 u(t'))\d t' \\ 
    &\qquad+i \kappa \int_0^t S(t-t')(\lvert \Theta_R(u)(t') \rvert^2 \Theta_R(u)(t'))\d t'-i\int_0^t S(t-t')u(t')\Phi \d W(t').
\end{aligned}
\end{equation}
\begin{proposition}[Global well-posedness of truncated equation]
\label{prop:truncated}
For every $\mathcal{F}_0$-measurable $u_0 \in L^2_{\Omega}(H^{s}_x)$, 
there is a unique $u \in L^2_{\Omega}(X_{T_0})$ which satisfies \eqref{eq:truncated} for every $t \in [0,T_0]$, $\mathbb{P}$-a.s.
This solution additionally satisfies $u \in L^2_{\Omega}(X^{s}_{T_0})$.
\end{proposition}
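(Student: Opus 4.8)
The plan is to set up a standard fixed-point argument for the map $\mathcal{T}$ defined by the right-hand side of \eqref{eq:truncated}, carried out in two stages: first a contraction estimate in the large space $L^2_{\Omega}(X_{T_0})$ to get existence and uniqueness at regularity $s=0$, then a self-mapping estimate on a ball in $L^2_{\Omega}(X^{s}_{T_0})$ to upgrade the regularity. Crucially, because the nonlinearity has been truncated in the $L^6(0,t;L^6_x)$-norm, we can work on the whole interval $[0,T_0]$ at once rather than first on a small interval and then iterating; the truncation supplies the global Lipschitz control that $|u|^2u$ lacks. I would first record the Lipschitz property of $u \mapsto |\Theta_R(u)|^2\Theta_R(u)$ as a map $X_T \to L^{6/5}(0,T;L^{6/5}_x)$ (note $(6,6)$ and $(6/5,6/5)$ are dual admissible pairs, with $6/5 = 6'$), with Lipschitz constant depending only on $R$; this is the key nonlinear estimate and follows from writing $|a|^2a-|b|^2b$ via the triple bracket, Hölder in space-time, and the fact that $\theta_R$ is Lipschitz and $\Theta_R(u)$ is supported where $\|u\|_{L^6(0,t;L^6_x)} \le 2R$.

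Next I would estimate each of the four terms in $\mathcal{T}u$ using Theorem~\ref{thm:strichartz} (with $s=0$): the homogeneous term by \eqref{eq:strichhom}, the linear drift terms ($-i\nu u$, the loss/amplification term, the Itô correction) by \eqref{eq:strichconv} together with Hölder in time to convert $L^{\alpha'}(0,T;L^2_x)$-norms into $L^\infty(0,T;L^2_x)$-norms at the cost of a power of $T$, the nonlinear term by \eqref{eq:strichconv} with $(\alpha,\delta)=(6,6)$ combined with the nonlinear Lipschitz estimate above, and the stochastic convolution by \eqref{eq:strichstoch} with $q=2$, using Proposition~\ref{prop:hs} (specifically \eqref{eq:hilbertschmidteq}) to bound $\|u\Phi\|$ in terms of $\beta\|u\|_{L^2_x}$. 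Taking $L^2_\Omega$-norms and assembling, one finds that $\mathcal{T}$ maps $L^2_\Omega(X_{T_0})$ into itself and that, for a possibly smaller sub-interval length $\tau$, $\mathcal{T}$ is a contraction on $L^2_\Omega(X_\tau)$; a Banach fixed-point argument on $[0,\tau]$, then concatenating finitely many such intervals (the solution at time $\tau$ serves as the new $\mathcal{F}_\tau$-measurable initial datum, and $R$ is fixed so the contraction constant does not degrade), yields a unique $u \in L^2_\Omega(X_{T_0})$. Here one has to be a little careful that the truncation functional $\Theta_R$ on $[0,t']$ for $t' > \tau$ sees the already-constructed part of $u$ on $[0,\tau]$, but since $\theta_R \in [0,1]$ this only helps the estimates.

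For the regularity upgrade, assume $u_0 \in L^2_\Omega(H^s_x)$ and $\phi \in H^s_x$. I would run the same estimates but now in the $X^s_{T_0}$-norm, using \eqref{eq:strichhom}, \eqref{eq:strichconv}, \eqref{eq:strichstoch} at level $s$ and \eqref{eq:hilbertschmidtest} from Proposition~\ref{prop:hs} for the stochastic term; the only genuinely new point is the nonlinear term, for which one needs a fractional product (Kato--Ponce / Leibniz) estimate bounding $\||\Theta_R(u)|^2\Theta_R(u)\|_{H^{s,6/5}_x}$ by $\|\Theta_R(u)\|_{L^6_x}^2\|\Theta_R(u)\|_{H^{s,6}_x}$ and hence, after integrating in time and using the truncation, by $C R^2 \|u\|_{X^s_T}$. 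Since $u$ already lies in $X_{T_0}$ with known norm, one shows that for $\tau$ small the fixed-point iteration (started from $u_0 \in X^s_\tau$) stays in a ball in $L^2_\Omega(X^s_\tau)$; by uniqueness in $X_\tau$ the limit coincides with $u$, so $u \in L^2_\Omega(X^s_\tau)$, and concatenating over $[0,T_0]$ gives $u \in L^2_\Omega(X^s_{T_0})$.

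I expect the main obstacle to be the nonlinear estimates — in particular establishing the fractional Leibniz bound for $\||u|^2u\|_{H^{s,6/5}_x}$ uniformly in $s \in [0,\infty)$ and checking that the interplay between the truncation functional $\Theta_R$ and this fractional norm is harmless (one needs that multiplication by the scalar $\theta_R(\|u\|_{L^6(0,t;L^6_x)})$, which is constant in $x$, commutes with $(1-\Delta)^{s/2}$, so that part is in fact trivial, but the Lipschitz-in-$u$ dependence of $\theta_R$ must be handled in the $L^2_x$-difference norm only, not the $H^s_x$ norm). The rest is bookkeeping with Strichartz estimates and Hölder in time.
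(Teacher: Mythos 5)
Your overall architecture — truncate, estimate the four Duhamel terms with Theorem~\ref{thm:strichartz} and \eqref{eq:hilbertschmidteq}/\eqref{eq:hilbertschmidtest}, contract in $L^2_\Omega(X_\tau)$ on a short interval, restart and concatenate, then upgrade to $X^s$ by a ball-invariance argument — is the same as the paper's. However, your key nonlinear estimate has a genuine gap: you propose to measure $\lvert\Theta_R(u)\rvert^2\Theta_R(u)$ in $L^{6/5}(0,T;L^{6/5}_x)$ (the dual of the admissible pair $(6,6)$) with a Lipschitz constant "depending only on $R$". This cannot work with the truncation as defined: on $\R$, Hölder forces $\tfrac{5}{6}=\tfrac{1}{6}+\tfrac{1}{6}+\tfrac{1}{2}$, so one of the three factors must be taken in an $L^2_x$-based norm, and $\Theta_R$ truncates only the $L^6(0,t;L^6_x)$-norm, not $L^\infty_t L^2_x$. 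Hence the Lipschitz constant inevitably involves $\lVert u\rVert_{C([0,T];L^2_x)}$, i.e.\ the map is only locally Lipschitz, which undermines the contraction on all of $L^2_\Omega(X_\tau)$ with $\tau$ independent of $u_0$ — precisely the property you use to restart and concatenate. The same arithmetic sinks the claimed fractional Leibniz bound $\lVert \lvert v\rvert^2 v\rVert_{H^{s,6/5}_x}\lesssim \lVert v\rVert_{L^6_x}^2\lVert v\rVert_{H^{s,6}_x}$: already for $s=0$ it fails by scaling ($v_\lambda(x)=v(\lambda x)$ gives $\lambda^{-5/6}$ on the left versus $\lambda^{-1/2}$ on the right, so the ratio blows up as $\lambda\to 0$).

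The repair is to use the convolution estimate \eqref{eq:strichconv} with $(\alpha,\delta)=(\infty,2)$, i.e.\ measure the cubic term in $L^1(0,T;H^{s}_x)$ rather than $L^{6/5}(0,T;H^{s,6/5}_x)$. Then in space $\tfrac12=\tfrac16+\tfrac16+\tfrac16$, so only $L^6_x$-type factors appear (with $H^{s,6}_x$ on one slot via Kato--Ponce), the spare time integrability yields the factor $T^{\frac12}$, and the truncation bounds two factors by $2R$ and supplies the Lipschitz property in the $L^6(0,T;L^6_x)$-difference; this gives a contraction constant of order $R^2T^{\frac12}$ uniform over $L^2_\Omega(X_T)$, which is exactly the paper's route (Lemmas~\ref{lem:besseltriple} and~\ref{lem:truncation}, estimates \eqref{eq:T2small}--\eqref{eq:T2contr}). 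One further small point: in the regularity upgrade, boundedness of the fixed-point iterates in a ball of $L^2_\Omega(X^s_\tau)$ plus strong convergence in $L^2_\Omega(X_\tau)$ does not by itself place the limit in $L^2_\Omega(X^s_\tau)$; you need the weak-compactness/lower-semicontinuity step that the paper carries out via Banach--Alaoglu.
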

For the proof of Proposition~\ref{prop:truncated}, we take inspiration from the fixed point argument that was applied to the stochastic NLS equation with initial data in $L^2_x$ in~\cite[Proposition 3.1]{de_bouard_stochastic_1999}.
The use of the stochastic Strichartz estimate \eqref{eq:strichstoch}, which was unknown at the time, significantly simplifies the proof.

We first formulate some estimates relating to the nonlinearity $\lvert u \rvert^2 u$ and the truncation $\Theta_R(u)$.
\begin{lemma}
\label{lem:besseltriple}
There exists a constant $C$, such that the estimate
\begin{equation}
        \label{eq:besseltriple}
        \lVert \lvert u \rvert^2 u \rVert_{L^{1}(0,T;H^{s}_x)} \leq C T^{\frac{1}{2}} \lVert u \rVert_{L^6(0,T;H^{s,6}_x)}\lVert u \rVert_{L^{6}(0,T;L^6_x)}^2
\end{equation}
holds for all $T \in (0,\infty)$ and $u \in L^6(0,T;H^{s,6}_x)$.
In the case $s = 0$, we can take $C = 1$.
\end{lemma}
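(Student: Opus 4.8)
The plan is to split into the base case $s = 0$, which is a direct Hölder estimate, and the case $s > 0$, which requires a fractional Leibniz (Kato--Ponce) inequality to distribute the Bessel derivatives across the product $|u|^2 u = \tfrac13\{u,u,u\}$.

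For $s = 0$: by Hölder in space with exponents $(6,6,6)$ we have $\lVert |u(t)|^2 u(t) \rVert_{L^1_x} \leq \lVert u(t) \rVert_{L^6_x}^3$, and then Hölder in time with exponents $(6,6,6)$ on $[0,T]$ (picking up a factor $T^{1/2}$, since $3/6 + 1/2 = 1$) gives $\lVert |u|^2 u \rVert_{L^1(0,T;L^2_x)} \leq T^{1/2}\lVert u \rVert_{L^6(0,T;L^6_x)}^3$. One of the three factors can be relabelled as $\lVert u \rVert_{L^6(0,T;H^{0,6}_x)}$ since $H^{0,6}_x = L^6_x$ isometrically, which gives \eqref{eq:besseltriple} with $C = 1$.

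For $s > 0$: apply $(1-\Delta)^{s/2}$ to $|u|^2 u$ and estimate its $L^{6/5}_x$ norm (noting $H^{s}_x \hookrightarrow H^{s,6/5}_x$ is \emph{not} what we want --- rather, we keep the $H^s_x$ target and use that for a single time slice $\lVert |u(t)|^2 u(t)\rVert_{H^s_x}$ can be controlled via the fractional product rule). Concretely, by the Kato--Ponce inequality in the form $\lVert (1-\Delta)^{s/2}(fg)\rVert_{L^q_x} \lesssim \lVert (1-\Delta)^{s/2}f\rVert_{L^{q_1}_x}\lVert g\rVert_{L^{q_2}_x} + \lVert f\rVert_{L^{q_3}_x}\lVert (1-\Delta)^{s/2}g\rVert_{L^{q_4}_x}$, iterated to handle the trilinear expression $\{u,u,u\}$ and with the choice $q = 2$, $q_1 = q_4 = 6$ for the factor carrying the derivative and the remaining two factors in $L^6_x$, we obtain for each fixed $t$
\begin{equation*}
    \lVert |u(t)|^2 u(t) \rVert_{H^s_x} \leq C \lVert u(t)\rVert_{H^{s,6}_x}\lVert u(t)\rVert_{L^6_x}^2,
\end{equation*}
using symmetry of the triple bracket to put the derivative on whichever factor is convenient. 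Integrating in time and applying Hölder with exponents $(6,6,6)$ on $[0,T]$ (again yielding $T^{1/2}$) then gives \eqref{eq:besseltriple}. The constant now genuinely depends on $s$ through the Kato--Ponce constant, so we cannot claim $C = 1$ here.

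The main obstacle is making the fractional Leibniz step rigorous for \emph{all} $s \in [0,\infty)$, including large $s$: the standard Kato--Ponce inequality is usually stated for a restricted range of exponents and smoothness, so one either cites a version valid on the full scale (e.g.\ the Grafakos--Oh refinement, or a Coifman--Meyer commutator argument, valid for $H^{s,p}_x$ with $1 < p < \infty$ and $s \geq 0$), or alternatively, for integer $s = k$, reduces to the classical Leibniz rule and bounds each term $\partial^{\alpha_1} u \,\partial^{\alpha_2}\bar u\, \partial^{\alpha_3} u$ with $|\alpha_1|+|\alpha_2|+|\alpha_3| = k$ by Hölder, using the Gagliardo--Nirenberg interpolation $\lVert \partial^{\alpha_j} u\rVert_{L^{p_j}_x} \lesssim \lVert u\rVert_{H^{k,6}_x}^{\theta_j}\lVert u\rVert_{L^6_x}^{1-\theta_j}$ with $\sum \theta_j = 1$, and then treats non-integer $s$ by complex interpolation between consecutive integer cases. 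Either route is routine once the correct form of the product estimate is invoked; the rest is bookkeeping with Hölder exponents.
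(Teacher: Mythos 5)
Your proposal is correct and follows essentially the same route as the paper: the $s=0$ case by H\"older alone (giving $C=1$), and the general case by a Kato--Ponce/fractional Leibniz estimate applied to the trilinear term at each fixed time (the paper iterates the two-factor inequality via the splittings $\tfrac12=\tfrac16+\tfrac13$ and $\tfrac13=\tfrac16+\tfrac16$, citing a version valid for all $s\geq 0$), followed by H\"older in time to produce the $T^{1/2}$ factor. Only a cosmetic slip: in the $s=0$ step the pointwise-in-time bound should read $\lVert\,\lvert u(t)\rvert^2 u(t)\rVert_{L^2_x}\leq\lVert u(t)\rVert_{L^6_x}^3$, not the $L^1_x$-norm.
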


\begin{proof}
Since $\frac{1}{2} = \frac{1}{6} + \frac{1}{3}$, it follows from the Kato--Ponce inequality (see for instance~\cite[Theorem 1.4]{gulisashvili_exact_1996}) that
\begin{equation*}
        \lVert f g h \rVert_{H^{s}_x} 
        \leq C \bigl(\lVert f \rVert_{H^{s,6}_x} \lVert g h \rVert_{L^3_x} 
           + \lVert f \rVert_{L^6_x} \lVert gh \rVert_{H^{s,3}_x} \bigr).
\end{equation*}
Applying H\"olders inequality and the Kato--Ponce inequality once more using $\frac{1}{3} = \frac{1}{6} + \frac{1}{6}$ gives
\begin{equation*}
        \lVert f g h \rVert_{H^{s}_x} 
        \leq C' \bigl(\lVert f \rVert_{H^{s,6}_x} \lVert g \rVert_{L^6_x} \lVert h \rVert_{L^6_x} 
           + \lVert f \rVert_{L^6_x} \lVert g \rVert_{H^{s,6}_x} \lVert h \rVert_{L^6_x}
           + \lVert f \rVert_{L^6_x} \lVert g \rVert_{L^6_x} \lVert h \rVert_{H^{s,6}_x} \bigr).
\end{equation*}
The desired estimate now follows by substituting $f = u(t)$, $g = u(t)$, $h = \overline{u}(t)$, integrating over $t$ and using H\"older's inequality.
In the case $s = 0$, we have $H^{s}_x = L^{2}_x$ isometrically so \eqref{eq:besseltriple} with $C = 1$ follows from H\"older's inequality.
\end{proof}
\begin{lemma}
\label{lem:truncation}
The estimates
\begin{subequations}
\label{eq:ThetaRest}
\begin{align}
    \label{eq:ThetaRsmall}
    \lVert \Theta_R(u) \rVert_{L^6(0,T;L^6_x)} &\leq 2R, \\
    \label{eq:ThetaRlip}
    \lVert \Theta_R(u) - \Theta_R(u') \rVert_{L^6(0,T;L^6_x)} &\leq 5\lVert u - u' \rVert_{L^6(0,T;L^6_x)},
\end{align}
\end{subequations}
hold for all $R \geq 1$, $T \in (0,\infty)$ and $u,u' \in L^6(0,T;L^6_x)$.
\end{lemma}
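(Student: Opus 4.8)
Here is how I would prove Lemma~\ref{lem:truncation}.

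The plan is to reduce both estimates to elementary properties of the scalar cutoff $\theta_R$ composed with the continuous, nondecreasing functions $g(t) := \lVert u\rVert_{L^6(0,t;L^6_x)}$ and $g'(t) := \lVert u'\rVert_{L^6(0,t;L^6_x)}$, which vanish at $t=0$ and satisfy $(\Theta_R(u))(t) = \theta_R(g(t))\,u(t)$. I will repeatedly use the following four facts: (i) $\lvert\theta_R\rvert\le 1$ everywhere and $\theta_R\equiv 0$ on $[2R,\infty)$; (ii) $\theta_R$ is globally Lipschitz with constant $R^{-1}$; (iii) by the triangle inequality in $L^6(0,t;L^6_x)$ one has $\lvert g(t)-g'(t)\rvert\le \lVert u-u'\rVert_{L^6(0,t;L^6_x)}\le\lVert u-u'\rVert_{L^6(0,T;L^6_x)}$; and (iv) $\int_{\{g<2R\}}\lVert u(t)\rVert_{L^6_x}^6\,\d t\le(2R)^6$, which holds because $\{t:g(t)<2R\}$ is an interval $[0,t_0)$ on which $\int_0^t\lVert u(s)\rVert_{L^6_x}^6\,\d s=g(t)^6$ and continuity of $g$ forces $g(t_0)\le 2R$.

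For \eqref{eq:ThetaRsmall}: since $\theta_R(g(t))$ vanishes once $g(t)\ge 2R$ and is bounded by $1$ otherwise, the sixth power of the left-hand side equals $\int_{\{g<2R\}}\lvert\theta_R(g(t))\rvert^6\lVert u(t)\rVert_{L^6_x}^6\,\d t\le\int_{\{g<2R\}}\lVert u(t)\rVert_{L^6_x}^6\,\d t\le(2R)^6$ by (iv); taking sixth roots concludes.

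For \eqref{eq:ThetaRlip}: write $M:=\lVert u-u'\rVert_{L^6(0,T;L^6_x)}$ and split $[0,T]=A\cup B$ according to whether $g(t)\le g'(t)$ or $g'(t)<g(t)$. On $A$ I use the identity $\theta_R(g)u-\theta_R(g')u'=\theta_R(g')(u-u')+(\theta_R(g)-\theta_R(g'))u$, and on $B$ the symmetric identity obtained by swapping $(g,u)\leftrightarrow(g',u')$; this writes $\Theta_R(u)-\Theta_R(u')=T_1+T_2$, where $T_1$ is the \emph{difference} term (carrying a $\theta_R$-factor of modulus $\le 1$) and $T_2$ the \emph{cutoff-difference} term. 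Fact (i) gives $\lVert T_1\rVert_{L^6(0,T;L^6_x)}\le M$ at once. For $T_2$, facts (ii) and (iii) give the pointwise bound $\lvert\theta_R(g(t))-\theta_R(g'(t))\rvert\le M/R$, and the key observation is that this cutoff difference is nonzero only when $\min\{g(t),g'(t)\}<2R$: on $A$ this forces $g(t)<2R$, so there $T_2$ is paired with $u$ and $\int_A\lVert T_2(t)\rVert_{L^6_x}^6\,\d t\le(M/R)^6\int_{\{g<2R\}}\lVert u\rVert_{L^6_x}^6\,\d t\le(2M)^6$ by (iv), and symmetrically $\int_B\lVert T_2(t)\rVert_{L^6_x}^6\,\d t\le(2M)^6$ with $u'$ in place of $u$; hence $\lVert T_2\rVert_{L^6(0,T;L^6_x)}^6\le 2(2M)^6$. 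Combining, $\lVert\Theta_R(u)-\Theta_R(u')\rVert_{L^6(0,T;L^6_x)}\le M+2^{7/6}M<5M$.

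I expect the only delicate point to be precisely this bookkeeping in the $T_2$-estimate: one must pair the cutoff difference with whichever of $u,u'$ still has accumulated $L^6$-mass below $2R$ — which is exactly what the splitting into $\{g\le g'\}$ and $\{g'<g\}$ arranges — since pairing it with the function that may already have run past the cutoff threshold would destroy control of the time integral. Everything else is a routine application of the triangle inequality and the defining properties of $\theta_R$.
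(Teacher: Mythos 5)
Your proof is correct. The bound \eqref{eq:ThetaRsmall} is established exactly as in the paper (the paper phrases your fact (iv) via the stopping point $t_R = \sup\{t \in [0,T] : \lVert u \rVert_{L^6(0,t;L^6_x)} \leq 2R\}$, which is the same observation). For the Lipschitz bound \eqref{eq:ThetaRlip}, the overall split into a difference term (carrying a cutoff factor of modulus at most one) plus a cutoff-difference term is also the same, but you handle the delicate cutoff-difference term by a genuinely different bookkeeping: the paper never splits the time interval; it bounds $\lvert \theta_R(y) - \theta_R(y') \rvert \leq R^{-1}\theta_{2R}(\min\{y,y'\})\lvert y - y'\rvert$, passes to the pointwise minimum $z(t) = \min\{\lVert u(t)\rVert_{L^6_x}, \lVert u'(t)\rVert_{L^6_x}\}$, uses monotonicity of $\theta_{2R}$ to replace $\min\{y,y'\}$ by the running norm of $z$, and then reuses the argument behind \eqref{eq:ThetaRsmall} with radius $2R$ applied to $z$, giving the factor $4$ and hence the constant $5$. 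You instead split $[0,T]$ according to which running norm is smaller and, on each piece, pair the cutoff difference with the function whose accumulated $L^6$ mass is still below $2R$, bounding the time integral directly by your fact (iv). This is the same underlying mechanism (the cutoff difference must always be matched with the function that has not yet run past the threshold), but it avoids the auxiliary function $z$ and the enlarged cutoff $\theta_{2R}$, and it yields the slightly better constant $1 + 2^{7/6} < 5$. Both arguments are valid; yours is marginally more elementary at the cost of a case distinction, while the paper's single pointwise estimate recycles \eqref{eq:ThetaRsmall} verbatim.
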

\begin{proof}
To ease notation, we will write $y(t) = \lVert u \rVert_{L^6(0,t;L^6_x)}$ and $y'(t) = \lVert u' \rVert_{L^6(0,t;L^6_x)}$ throughout the proof.
Notice that $y$ and $y'$ are nondecreasing by definition and continuous by dominated convergence.
Also, by the reverse triangle inequality, we have
\begin{equation*}
\lvert y(t) - y'(t) \rvert \leq \lVert u - u' \rVert_{L^6(0,T;L^6_x)}
\end{equation*}
for every $t \in [0,T]$.
To get \eqref{eq:ThetaRsmall}, set $t_{R} \coloneqq \sup\{t \in [0,T] : y(t) \leq 2R \}$.
Then by construction of $\theta_R$ we have
\begin{align*}
    \lVert \theta_R(y(\cdot)) u(\cdot) \rVert_{L^6(0,T;L^6_x)} 
    = \lVert \theta_R(y(\cdot)) u(\cdot) \rVert_{L^6(0,t_R;L^6_x)}
    \leq \lVert u \rVert_{L^6(0,t_R;L^6_x)} 
    =y(t_R) \leq 2R.
\end{align*}
For the second inequality, observe that by construction of $\theta_R$ we have
\begin{align*}
    \lvert \theta_R(y) - \theta_R(y')\rvert \leq R^{-1} \mathbbm{1}_{[0,2R]}(\min\{y, y'\})\lvert y - y'\rvert \leq R^{-1}\theta_{2R}(\min\{y,y'\})\lvert y - y' \rvert
\end{align*}
for every $y,y' \geq 0$.
Thus, using the triangle inequality we may estimate
\begin{align*}
    \lVert \theta_R(y(t))&u(t) - \theta_R(y'(t))u'(t) \rVert_{L^6_x} \leq \lvert \theta_R(y(t)) - \theta_R(y'(t)) \rvert \lVert u(t)\rVert_{L^6_x} + \lVert u(t) - u'(t) \rVert_{L^6_x} \\
    &\leq R^{-1}\theta_{2R}(\min\{y(t), y'(t) \}) \lvert y(t) - y'(t)\rvert \lVert u(t) \rVert_{L^6_x} + \lVert u(t) - u'(t) \rVert_{L^6_x} \\
    &\leq R^{-1}\theta_{2R}(\min\{y(t), y'(t) \}) \lVert u(t) \rVert_{L^6_x} \lVert u - u'\rVert_{L^6(0,T;L^6_x)} + \lVert u(t) - u'(t) \rVert_{L^6_x}.
\end{align*}
for every $t \in [0,T]$.
By swapping $u$ and $u'$ we can obtain the same estimate but with $\lVert u'(t)\rVert_{L^6_x}$ on the right-hand side instead of $\lVert u(t) \rVert_{L^6_x}$.
Thus, writing
\begin{equation*}
z(t) \coloneqq \min\{\lVert u(t) \rVert_{L^6_x}, \lVert u'(t) \rVert_{L^6_x} \}
\end{equation*}
and taking $L^6(0,T)$-norms, we see that \eqref{eq:ThetaRlip} follows from
\begin{align*}
    \bigl\lVert R^{-1} \theta_{2R}(\min\{y(\cdot), y'(\cdot) \}) & z(\cdot) \bigr\rVert_{L^6(0,T)} 
    \leq R^{-1}\bigl\lVert \theta_{2R}(\lVert z\rVert_{L^6(0,\cdot)}) z(\cdot) \rVert_{L^6(0,T)} 
    \leq 4,
\end{align*}
where the final inequality follows exactly like how we derived \eqref{eq:ThetaRsmall}.
\end{proof}

\begin{lemma}
\label{lem:estimates}
Define the operators
\begin{subequations}
\label{eq:defT0123}
\begin{align}
        \label{eq:defT0}
        (\mathcal{T}_0 u_0)(t) &\coloneqq S(t)u_0, \\
        \label{eq:defT1}
        (\mathcal{T}_1 u)(t) &\coloneqq -\int_0^t S(t-t')(i \nu u(t') + \epsilon(\gamma u(t') -\mu \overline{u}(t')) + \tfrac{1}{2}\beta^2 u(t'))\d t', \\
        \label{eq:defT2}
        (\mathcal{T}_2^R u)(t) &\coloneqq i \kappa\int_0^t S(t-t')\lvert \Theta_R(u) \rvert^2 \Theta_R(u) \d t', \\
        \label{eq:defT3}
        (\mathcal{T}_3 u)(t) &\coloneqq - i \int_0^t S(t-t')u(t')\Phi \d W(t').
\end{align}
\end{subequations}
There exists a constant $C$, such that the inequalities
\begin{subequations}
\label{eq:T0123est}
\begin{align}
        \label{eq:T0small}
        \lVert \mathcal{T}_0 u_0 \rVert_{X_T^{s}} &\leq C \lVert u_0 \rVert_{H^{s}_x}, \\
        \label{eq:T1smalllin}
        \lVert \mathcal{T}_1 u \rVert_{X_T^{s}} &\leq C T \lVert u \rVert_{C([0,T];H^{s}_x)}, \\
        \label{eq:T2small}
        \lVert \mathcal{T}_2^R u \rVert_{X_T^{s}} &\leq C T^{\frac{1}{2}}R^2 \lVert u \rVert_{L^6(0,T;H^{s,6}_x)}, \\ 
        \label{eq:T2contr}
        \lVert \mathcal{T}_2^R u - \mathcal{T}_2^R u'\rVert_{X_T} &\leq C T^{\frac{1}{2}}R^2 \lVert u - u' \rVert_{L^6(0,T;L^6_x)}, \\ 
        \label{eq:T3smalllin}    
        \lVert \mathcal{T}_3 u \rVert_{L^2_{\Omega}(X_T^{s})} &\leq C T^{\frac{1}{2}} \lVert u \rVert_{L^2_{\Omega}(C([0,T];H^{s}_x))},
\end{align}
hold for every $T \in (0,\infty)$, $R \geq 1$, $u_0 \in H^{s}_x$, and predictable $u,u' \in L^2_{\Omega}(X^{s}_T)$.
\end{subequations}{}

\end{lemma}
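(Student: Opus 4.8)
The plan is to obtain each of the five inequalities in \eqref{eq:T0123est} by inserting the product estimates of Lemmas~\ref{lem:besseltriple} and~\ref{lem:truncation} into the Strichartz estimates of Theorem~\ref{thm:strichartz}, with suitable choices of the admissible exponents. Since $(\infty,2)$ is admissible, and after passing to continuous versions where necessary, the left-hand sides of \eqref{eq:strich} may be replaced by the full $X_T^{s}$-norm; I will also use repeatedly that the $H^{s,q}_x$-norms are invariant under complex conjugation (the Fourier multiplier $\xi\mapsto(1+|\xi|^{2})^{s/2}$ being even), so that the $\overline{u}$-term causes no difficulty. Estimate \eqref{eq:T0small} is then precisely the homogeneous estimate \eqref{eq:strichhom}. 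For \eqref{eq:T1smalllin} I apply the convolution estimate \eqref{eq:strichconv} with right-hand exponent $(\alpha,\delta)=(\infty,2)$, so that the source term is measured in $L^{1}(0,T;H^{s}_x)$; since the map $v\mapsto i\nu v+\epsilon(\gamma v-\mu\overline{v})+\tfrac12\beta^{2}v$ is bounded on $H^{s}_x$, that norm is controlled by $CT\lVert u\rVert_{C([0,T];H^{s}_x)}$.

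For \eqref{eq:T2small} I again use \eqref{eq:strichconv} with right-hand exponent $(\infty,2)$, which reduces the claim to bounding $\lVert\lvert\Theta_R(u)\rvert^{2}\Theta_R(u)\rVert_{L^{1}(0,T;H^{s}_x)}$ by $CT^{1/2}R^{2}\lVert u\rVert_{L^{6}(0,T;H^{s,6}_x)}$. This follows from Lemma~\ref{lem:besseltriple} applied to $\Theta_R(u)$, together with two remarks: since $\Theta_R(u)(t)$ is a scalar in $[0,1]$ times $u(t)$, one has $\lVert\Theta_R(u)\rVert_{L^{6}(0,T;H^{s,6}_x)}\leq\lVert u\rVert_{L^{6}(0,T;H^{s,6}_x)}$, and by \eqref{eq:ThetaRsmall} one has $\lVert\Theta_R(u)\rVert_{L^{6}(0,T;L^{6}_x)}\leq2R$.

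The contraction estimate \eqref{eq:T2contr} requires the most bookkeeping. Working with $s=0$ and using \eqref{eq:strichconv} with right-hand exponent $(\infty,2)$, it suffices to bound $\lVert\lvert\Theta_R(u)\rvert^{2}\Theta_R(u)-\lvert\Theta_R(u')\rvert^{2}\Theta_R(u')\rVert_{L^{1}(0,T;L^{2}_x)}$. Writing $a=\Theta_R(u)$, $b=\Theta_R(u')$, $h=a-b$, trilinearity and symmetry of the triple bracket give $\tfrac13(\{a,a,a\}-\{b,b,b\})=\{b,b,h\}+\{b,h,h\}+\tfrac13\{h,h,h\}$, and $\lVert\{x,y,z\}\rVert_{L^{2}_x}\leq3\lVert x\rVert_{L^{6}_x}\lVert y\rVert_{L^{6}_x}\lVert z\rVert_{L^{6}_x}$ by H\"older. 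Integrating in time and using that $\int_{0}^{T}f_{1}f_{2}f_{3}\d t\leq T^{1/2}\prod_{i}\lVert f_{i}\rVert_{L^{6}(0,T)}$ (which produces the factor $T^{1/2}$) leaves an expression controlled by $CT^{1/2}(\lVert a\rVert_{L^{6}(0,T;L^{6}_x)}^{2}+\lVert b\rVert_{L^{6}(0,T;L^{6}_x)}^{2})\lVert h\rVert_{L^{6}(0,T;L^{6}_x)}$; invoking \eqref{eq:ThetaRsmall} and \eqref{eq:ThetaRlip} then yields the claim.

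Finally, \eqref{eq:T3smalllin} follows from the stochastic Strichartz estimate \eqref{eq:strichstoch} with $q=2$ applied to the predictable integrand $u$ (the prefactor $\lVert\phi\rVert_{H^{s}_x}$ being a fixed constant absorbed into $C$), followed by $\lVert u\rVert_{L^{2}_{\Omega}(L^{2}(0,T;H^{s}_x))}\leq T^{1/2}\lVert u\rVert_{L^{2}_{\Omega}(C([0,T];H^{s}_x))}$. I do not expect a genuine obstacle: each estimate reduces to a routine choice of Strichartz exponents combined with H\"older's inequality in time, and the only delicate point is \eqref{eq:T2contr}, where the cubic difference expansion, the Lipschitz bound \eqref{eq:ThetaRlip}, and the time-integration steps must be chained so as to recover exactly the factor $T^{1/2}R^{2}$.
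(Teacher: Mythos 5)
Your proposal is correct and follows essentially the same route as the paper: each bound reduces via the Strichartz estimates of Theorem~\ref{thm:strichartz} (convolution estimate with exponent pair $(\infty,2)$, stochastic estimate with $q=2$) to the product estimates of Lemmas~\ref{lem:besseltriple} and~\ref{lem:truncation}, with H\"older in time producing the factor $T^{\frac12}$ and \eqref{eq:ThetaRsmall}, \eqref{eq:ThetaRlip} producing $R^2$. The only cosmetic difference is in \eqref{eq:T2contr}, where you expand the cubic difference with the triple bracket $\{b,b,h\}+\{b,h,h\}+\tfrac13\{h,h,h\}$ while the paper uses the telescoping decomposition $(v-v')v\overline{v}+v'(v-v')\overline{v}+v'v'(\overline{v}-\overline{v}')$; both yield the same bound.
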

\begin{proof}
The only estimates which do not directly follow immediately from Theorem~\ref{thm:strichartz} are \eqref{eq:T2small} and \eqref{eq:T2contr}.
For \eqref{eq:T2small}, we use Theorem~\ref{thm:strichartz}, Lemma~\ref{lem:besseltriple} and Lemma~\ref{lem:truncation} to estimate
\begin{align*}
    \lVert \mathcal{T}_2^R u \rVert_{X_T^s} \overset{\eqref{eq:strichconv}}{\leq} C \bigl\lVert \lvert\Theta_R(u)\rvert^2 \Theta_R(u)\bigr\rVert_{L^1(0,T;H^{s}_x)} 
    \overset{\eqref{eq:besseltriple}}&{\leq} C' T^{\frac{1}{2}} \lVert \Theta_R(u)\rVert_{L^6(0,T;L^6_x)}^2\lVert \Theta_R(u)\rVert_{L^6(0,T;H^{s,6}_x)} \\
    \overset{\eqref{eq:ThetaRsmall}}&{\leq} C' T^{\frac{1}{2}} (2R)^2 \lVert u \rVert_{L^6(0,T;H^{s,6}_x)}.
\end{align*}
To derive \eqref{eq:T2contr} we write for convenience $v = \Theta_R(u)$ and $v' = \Theta_R(u')$.
Then, from Lemma~\ref{lem:truncation} we see that both $\lVert v \rVert_{L^6(0,T;L^6_x)}$ and $\lVert v' \rVert_{L^6(0,T;L^6_x)}$ are bounded by $2R$.
Thus, by H\"older's inequality and Lemma~\ref{lem:truncation} we have
\begin{align*}
    \bigl\lVert \lvert v \rvert^2 v - \lvert v' \rvert^2 v'\bigr\rVert_{L^1(0,T;L^2_x)}
    &\leq \lVert (v  - v')v \overline{v} \rVert_{L^1(0,T;L^2_x)} \\
    &\quad+ \lVert v' (v - v')\overline{v} \rVert_{L^1(0,T;L^2_x)} \\
    &\quad+ \lVert v' v' (\overline{v} - \overline{v}')\rVert_{L^1(0,T;L^2_x)} \\
    \overset{\eqref{eq:besseltriple}}&{\leq} 3(2R)^2 T^{\frac{1}{2}}\lVert v - v' \rVert_{L^6(0,T;L^6_x)} \\
    \overset{\eqref{eq:ThetaRlip}}&{\leq}60 R^2 T^{\frac{1}{2}} \lVert u - u'\rVert_{L^6(0,T;L^6_x)}.
\end{align*}
The inequality \eqref{eq:T2contr} now follows straightforwardly by combining the above estimate with \eqref{eq:strichconv}.
\end{proof}
\begin{proof}[Proof of Proposition~\ref{prop:truncated}]
Fix $\mathcal{F}_0$-measurable $u_0 \in L^2_{\Omega}(H^{s}_x)$ and define the operator
\begin{align*}
        \mathcal{T}^R (u)(t) \coloneqq \bigl(\mathcal{T}_0 u_0 + \mathcal{T}_1 u + \mathcal{T}_2^R u + \mathcal{T}_3 u \bigr)(t).
\end{align*}
By combining the estimates from \eqref{eq:T0123est} and using linearity of $\mathcal{T}_1$ and $\mathcal{T}_3$, we obtain the inequality
\begin{align*}
    \lVert \mathcal{T}^R(u) - \mathcal{T}^R(u')\rVert_{L^2_{\Omega}(X_T)} &\leq C(1+R^2) (T^{\frac{1}{2}} + T) \lVert u - u' \rVert_{L^2_{\Omega}(X_T)},
\end{align*}
for some $C$ which does not depend on $R$, $T$, $u$, $u'$, or $u_0$.
From \eqref{eq:T0123est} we can also see that $\mathcal{T}^R$ maps $L^2_{\Omega}(X_T)$ into itself.
Thus, by the contraction-mapping principle, for sufficiently small $T$ (independent of $u_0$), $\mathcal{T}^R$ has a unique fixed point in $L^2_{\Omega}(X_T)$, and this fixed point is exactly the solution to \eqref{eq:truncated} on $[0,T]$.

To get a solution on $[T,2T]$, we notice that $T$ could be chosen independently of $z_0$.
Thus, since $u(T) \in L^2(\Omega;H^{s}_x)$, it is possible to restart the solution at time $T$ with initial value $u(T)$ to get a solution on $[T,2T]$.
Repeating this and patching together the solutions, we obtain a solution on $[0,T_0]$.

It only remains to show the additional regularity of $u$.
To do this, observe that by \eqref{eq:T0123est} we also have
\begin{equation*}
    \lVert \mathcal{T}^R(u) \rVert_{L^2_{\Omega}(X_T^s)} 
    \leq C\bigl(\lVert u_0 \rVert_{L^2_{\Omega}(H^{s}_x)} + (1+R^2)(T^{\frac{1}{2}} + T)\lVert u \rVert_{L^2_{\Omega}(X^s_T)}\bigr),
\end{equation*}
for some $C$ which is independent of $R$, $T$, $u$ and $u_0$.
Thus, for $T$ sufficiently small depending only on $R$, we see that $\mathcal{T}^R$ maps the ball 
\begin{equation*}
B \coloneqq \{ u \in L^2_{\Omega}(X^s_T) : \lVert u \rVert_{L^2_{\Omega}(X^s_T)} \leq 2C\lVert u_0 \rVert_{L^2(H^{s}_x)} \}
\end{equation*}
into itself.
Therefore, by the theorem of Banach--Alaoglu, the fixed-point iteration by which we obtained $u$ has a subsequence which converges weakly in $L^2_{\Omega}(X^s_T)$.
Since this subsequence also converges strongly to $u$ in $L^2_{\Omega}(X_T)$, it follows that $u \in L^2_{\Omega}(X^s_T)$ by uniqueness of limits.
Since $T$ was chosen independently of $u_0$, we may repeat this procedure on the intervals $[T,2T]$ and so on to find that $u \in L^2_{\Omega}(X^s_{T_0})$.
\end{proof}

Let us denote by $u_R$ the unique solution to the truncated equation \eqref{eq:truncated} with radius $R$ given by Proposition~\ref{prop:truncated}.
We define for $R \geq 1$ the stopping time
\begin{equation}
        \label{eq:tau_L6L6R}
        \tau_R \coloneqq \sup \{t \in [0,T_0] : \lVert u_R\rVert_{L^6(0,t;L^6_x)} \leq R \},
\end{equation}
which corresponds to the first time the norm $\lVert u_R\rVert_{L^6(0,t;L^6_x)}$ reaches size $R$, and before this time no truncation takes place.
Two solutions $u_{R_1}$ and $u_{{R_1}}$ should therefore coincide on $[0,\min \{ \tau_{R_1},\tau_{R_1} \} ]$. 
This is stated in the following lemma.
\begin{lemma}
\label{lem:uniqueness}
Let $R_1, R_2 \geq 1$. 
Then the equality $u_{R_1}(t) = u_{R_2}(t)$ holds $\mathbb{P}$-a.s. for every ${t \in [0,\min \{\tau_{R_1},\tau_{R_2} \}]}$.
\end{lemma}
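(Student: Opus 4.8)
The plan is to reduce the assertion to a pathwise uniqueness statement for the \emph{untruncated} mild equation on the random interval $[0,\tau]$, $\tau \coloneqq \min\{\tau_{R_1},\tau_{R_2}\}$, and then run a localised contraction argument. First, for $t \le \tau$ we have $\lVert u_{R_i}\rVert_{L^6(0,t;L^6_x)} \le R_i$ by \eqref{eq:tau_L6L6R}, hence $\theta_{R_i}(\lVert u_{R_i}\rVert_{L^6(0,t;L^6_x)}) = 1$ and $\Theta_{R_i}(u_{R_i})(t) = u_{R_i}(t)$. Thus, on $[0,\tau]$ both $u_{R_1}$ and $u_{R_2}$ solve the \emph{same} equation \eqref{eq:spfnlsmild} (with the common initial datum $u_0$), so that their difference $w \coloneqq u_{R_1} - u_{R_2}$ satisfies, for all $t \in [0,\tau]$, $\mathbb{P}$-a.s.,
\begin{equation*}
w(t) = (\mathcal{T}_1 w)(t) + i\kappa\int_0^t S(t-t')\bigl(\lvert u_{R_1}(t')\rvert^2 u_{R_1}(t') - \lvert u_{R_2}(t')\rvert^2 u_{R_2}(t')\bigr)\d t' + (\mathcal{T}_3 w)(t),
\end{equation*}
a mild equation with zero initial datum whose only nonlinearity is the cubic difference and whose only stochastic term $\mathcal{T}_3$ is \emph{linear} in $w$.

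Since $\tau$ is random, it interacts badly with the stochastic convolution, so I would pass to the \emph{predictable} process $w_\tau(t) \coloneqq \mathbbm{1}_{[0,\tau]}(t)\,w(t)$ (the indicator $t \mapsto \mathbbm{1}_{\{t \le \tau\}}$ is left-continuous and adapted, hence predictable) together with the stopped inputs $v \coloneqq \mathbbm{1}_{[0,\tau]} u_{R_1}$ and $v' \coloneqq \mathbbm{1}_{[0,\tau]} u_{R_2}$. Using the locality of the It\^o integral under optional stopping (applied, for each fixed $t$, to the stopping time $t\wedge\tau$), one checks that for every $t \in [0,T_0]$,
\begin{equation*}
w_\tau(t) = \mathbbm{1}_{[0,\tau]}(t)\Bigl[(\mathcal{T}_1 w_\tau)(t) + i\kappa\int_0^t S(t-t')\bigl(\lvert v(t')\rvert^2 v(t') - \lvert v'(t')\rvert^2 v'(t')\bigr)\d t' + (\mathcal{T}_3 w_\tau)(t)\Bigr],
\end{equation*}
since on $\{t > \tau\}$ both sides vanish, while on $\{t \le \tau\}$ all inner indicators equal $1$ on $[0,t]$ and the identity of the first paragraph applies. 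The point of using the stopped inputs is that $\lVert v\rVert_{L^6(0,T_0;L^6_x)} \le R_1$, $\lVert v'\rVert_{L^6(0,T_0;L^6_x)} \le R_2$, and $v - v' = w_\tau$; hence the derivation of \eqref{eq:T2contr} (via the trilinear splitting of the cubic difference and Lemma~\ref{lem:besseltriple}) applies \emph{without} truncation and gives $\lVert\,\lvert v\rvert^2 v - \lvert v'\rvert^2 v'\,\rVert_{L^1(0,T;L^2_x)} \le C(R_1^2+R_2^2)\,T^{1/2}\lVert w_\tau\rVert_{L^6(0,T;L^6_x)}$ for every $T \le T_0$.

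With this in hand I would drop the harmless outer indicator and estimate, exactly as in the proof of Proposition~\ref{prop:truncated}: the $\mathcal{T}_1$-term by the bounds of Lemma~\ref{lem:estimates}, the cubic term by \eqref{eq:strichconv} together with the displayed bound above, and the $\mathcal{T}_3$-term by the stochastic Strichartz estimate \eqref{eq:strichstoch} with $q=2$ and $\lVert\phi\rVert_{L^2_x} = \beta$. On any subinterval $[0,\delta] \subseteq [0,T_0]$ this yields
\begin{equation*}
\lVert w_\tau\rVert_{L^2_{\Omega}(X_\delta)} \le C\bigl(\delta + \beta\,\delta^{1/2} + (R_1^2+R_2^2)\,\delta^{1/2}\bigr)\lVert w_\tau\rVert_{L^2_{\Omega}(X_\delta)},
\end{equation*}
and choosing $\delta$ small enough (depending only on $R_1,R_2,\beta$) that the prefactor is $<1$ forces $w_\tau \equiv 0$ on $[0,\delta]$, i.e.\ $u_{R_1} = u_{R_2}$ on $[0,\delta\wedge\tau)$, $\mathbb{P}$-a.s.

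Finally, since $\delta$ does not depend on the left endpoint, one iterates this estimate on $[\delta,2\delta],\,[2\delta,3\delta],\dots$ — on $[k\delta,(k+1)\delta]$ the contributions of the integrals over $[0,k\delta]$ drop out because $w_\tau$ already vanishes there — and after finitely many steps obtains $w_\tau \equiv 0$ on $[0,T_0]$, hence $u_{R_1}(t) = u_{R_2}(t)$ for all $t \in [0,\tau)$; continuity of $u_{R_1},u_{R_2}$ in $C([0,T_0];L^2_x)$ extends this to $t = \tau$, which is the claim. I expect the only genuinely delicate point to be the second displayed identity — in particular, justifying that the stochastic convolution localises correctly across the random time $\tau$ via optional stopping — while everything else is a routine adaptation of the contraction argument already carried out for Proposition~\ref{prop:truncated}.
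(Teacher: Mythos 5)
Your proof is correct and follows essentially the same route as the paper: both arguments localize with the indicator $\mathbbm{1}_{[0,\tau]}$ at $\tau=\min\{\tau_{R_1},\tau_{R_2}\}$, use that no truncation takes place before $\tau$, and conclude via the same small-time Strichartz contraction/absorption estimate from Proposition~\ref{prop:truncated} together with iteration in time. The only cosmetic difference is that the paper keeps the truncation, exhibiting $\mathbbm{1}_{[0,\tau]}u_{R_1}$ and $\mathbbm{1}_{[0,\tau]}u_{R_2}$ as fixed points of the single operator $\mathcal{T}'(u)=\mathbbm{1}_{[0,\tau]}\mathcal{T}_{R}(\mathbbm{1}_{[0,\tau]}u)$ with $R=\max\{R_1,R_2\}$ and citing \eqref{eq:T2contr}, whereas you discard the truncation and re-derive the cubic Lipschitz bound directly from the $L^6(0,T;L^6_x)$-bounds of the stopped inputs.
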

\begin{proof}
Set $\tau \coloneqq \min \{\tau_{R_1},\tau_{R_2} \}$, $R \coloneqq \max\{R_1, R_2\}$ and consider the operator
\begin{equation*}
    \mathcal{T}'(u) \coloneqq \mathbbm{1}_{[0,\tau]} \mathcal{T}_R(\mathbbm{1}_{[0,\tau]}u).
\end{equation*}
Repeating the arguments and the end of the proof of Proposition~\ref{prop:truncated}, we see that $\mathcal{T}'$ has a unique fixed point on $L^2_{\Omega}(X_{T_0})$.
On the other hand, since $u_{R_k}$ is a fixed point of $\mathcal{T}_{R_k}$ we also have
\begin{equation*}
    \mathbbm{1}_{[0,\tau]} u_{R_k} 
    = \mathbbm{1}_{[0,\tau]} \mathcal{T}_{R_k}(u_{R_k})
    = \mathbbm{1}_{[0,\tau]} \mathcal{T}_{R_k}(\mathbbm{1}_{[0,\tau]} u_{R_k})
    = \mathbbm{1}_{[0,\tau]} \mathcal{T}_{R}(\mathbbm{1}_{[0,\tau]} u_{R_k}) 
    = \mathcal{T}'(\mathbbm{1}_{[0,\tau]} u_{R_k})
\end{equation*}
for $k \in \{1,2\}$, showing that $\mathbbm{1}_{[0,\tau]} u_{R_1}$ and $\mathbbm{1}_{[0,\tau]} u_{R_2}$ are both fixed points of $\mathcal{T}'$.
Thus, $\mathbb{P}$-a.s. equality of $u$ and $u'$ on $[0,\tau]$ follows.
\end{proof}

Using the stopping times $\tau_R$ introduced in \eqref{eq:tau_L6L6R}, we now define
\begin{align}
\label{eq:deftaustar}
    \tau^* \coloneqq \sup_{R\geq 1} \ \tau_R.
\end{align}
Let us construct a maximal solution $u$ by setting $u(t) \coloneqq u_R(t)$ on $[0,\tau_R]$ for each $R \geq 1$.
By Lemma~\ref{lem:uniqueness}, this process is well-defined on $[0,\tau^*)$.
We collect our findings about $u$ so far in the following proposition.

\begin{proposition}[Local well-posedness of SPFNLS]
\label{prop:local}
The following statements hold $\mathbb{P}$-a.s.:
\begin{enumerate}
\item $u \in X_t^s$ for every $t \in [0,\tau^*)$,
\item $u$ satisfies \eqref{eq:truncated} for all $t \in [0,\tau^*)$,
\item $\tau^* < T_0$ implies $\lim_{t \nearrow \tau^*} \lVert u(t) \rVert_{L^6(0,t;L^6_x)} = \infty$.
\end{enumerate}
\end{proposition}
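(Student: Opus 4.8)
The plan is to deduce all three assertions by patching together the global solutions $u_R$ of the truncated equation \eqref{eq:truncated} supplied by Proposition~\ref{prop:truncated}, and reading off the blow-up alternative from the construction \eqref{eq:deftaustar} of $\tau^*$. First I would record two elementary facts. (a) For each $R \ge 1$ the map $t \mapsto \lVert u_R \rVert_{L^6(0,t;L^6_x)}$ is nondecreasing and, by dominated convergence (using $u_R \in L^6(0,T_0;L^6_x)$), continuous; hence $\{t : \lVert u_R\rVert_{L^6(0,t;L^6_x)} \le R\} = [0,\tau_R]$, so no truncation occurs on $[0,\tau_R]$ (that is, $\Theta_R(u_R)(t) = u_R(t)$ for $t \le \tau_R$), and if $\tau_R < T_0$ then $\lVert u_R\rVert_{L^6(0,\tau_R;L^6_x)} = R$. (b) $R \mapsto \tau_R$ is nondecreasing: for $R_1 \le R_2$, Lemma~\ref{lem:uniqueness} gives $u_{R_1} = u_{R_2}$ on $[0,\tau_{R_1}\wedge\tau_{R_2}]$, and a short comparison of the two functions in (a) at their exit levels forces $\tau_{R_1}\le\tau_{R_2}$. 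By (b), $\tau^* = \sup_{n\in\N}\tau_n = \lim_{n\to\infty}\tau_n$ and, for non-integer $R$, $u_R = u_{\lceil R\rceil}$ on $[0,\tau_R]$, so the patched process is entirely determined by the countable family $\{u_n\}_{n\in\N}$. I would then fix the full-measure event $\Omega_0$ on which simultaneously $u_n \in X_{T_0}^s$ for all $n$ (by Proposition~\ref{prop:truncated}, since membership in $L^2_{\Omega}(X_{T_0}^s)$ forces a.s.\ membership in $X_{T_0}^s$) and $u_n = u_m$ on $[0,\tau_n\wedge\tau_m]$ for all $n,m$ (by Lemma~\ref{lem:uniqueness}); as a countable intersection, $\PP(\Omega_0) = 1$, and on $\Omega_0$ the process $u = u_n$ on $[0,\tau_n]$ is well defined on $[0,\tau^*)$.

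Assertions (1) and (2) are then immediate. Fix $\omega \in \Omega_0$ and $t \in [0,\tau^*)$; by definition of the supremum there is $n$ with $\tau_n > t$, so $u = u_n$ on $[0,t]$. Since $u_n \in X_{T_0}^s$, its restriction lies in $X_t^s$, which is (1). For (2), fact (a) shows that $u_n$ solves \eqref{eq:truncated} with radius $n$ on $[0,\tau_n]$ without any truncation taking place, so there \eqref{eq:truncated} coincides with the untruncated mild equation \eqref{eq:spfnlsmild}; hence $u$ satisfies \eqref{eq:truncated} on $[0,t]$.

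For (3) I would argue directly. Suppose $\omega \in \Omega_0$ with $\tau^*(\omega) < T_0$; then $\tau_n \le \tau^* < T_0$ for all $n$, so $\lVert u\rVert_{L^6(0,\tau_n;L^6_x)} = \lVert u_n\rVert_{L^6(0,\tau_n;L^6_x)} = n$ by (a). Moreover $\tau_n < \tau^*$ strictly: if $\tau_n = \tau^*$, then by (b) $\tau_m = \tau^*$ for all $m \ge n$, and Lemma~\ref{lem:uniqueness} together with (a) would give $m = \lVert u_m\rVert_{L^6(0,\tau^*;L^6_x)} = \lVert u_n\rVert_{L^6(0,\tau^*;L^6_x)} = n$ for every $m \ge n$, which is absurd. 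Since $t \mapsto \lVert u\rVert_{L^6(0,t;L^6_x)}$ is nondecreasing on $[0,\tau^*)$ and $\tau_n \nearrow \tau^*$ with each $\tau_n < \tau^*$, for any $K$ we take $n > K$ and obtain $\lVert u\rVert_{L^6(0,t;L^6_x)} \ge \lVert u\rVert_{L^6(0,\tau_n;L^6_x)} = n > K$ for all $t \in [\tau_n, \tau^*)$; therefore $\lim_{t\nearrow\tau^*}\lVert u\rVert_{L^6(0,t;L^6_x)} = \infty$.

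The point that requires the most care — and which I regard as the main obstacle — is not any estimate but the measure-theoretic bookkeeping: a priori the exceptional null sets coming from Proposition~\ref{prop:truncated} and Lemma~\ref{lem:uniqueness} depend on the truncation radius, so the patched solution is defined only off an uncountable union of null sets. The monotonicity (b) of $R \mapsto \tau_R$, itself a consequence of Lemma~\ref{lem:uniqueness} and the continuity in (a), is exactly what reduces the construction to the countable family $\{u_n\}$ and legitimises working on a single full-measure event; carrying out this reduction cleanly, together with the continuity statement in (a) on which it rests, is the crux. Everything else is routine.
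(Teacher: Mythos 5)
Your proposal is correct and follows essentially the same route as the paper, which gives no separate proof of Proposition~\ref{prop:local} but simply defines $u$ by patching the truncated solutions $u_R$ on $[0,\tau_R]$ and invokes Lemma~\ref{lem:uniqueness} for well-definedness, reading off the blow-up alternative from the definition \eqref{eq:deftaustar} of $\tau^*$. Your write-up merely supplies the details the paper leaves implicit --- continuity and monotonicity of $t \mapsto \lVert u_R \rVert_{L^6(0,t;L^6_x)}$, monotonicity of $R \mapsto \tau_R$, and the reduction to the countable family $\{u_n\}_{n\in\N}$ so that a single full-measure event suffices --- all of which are accurate and in the spirit of the paper's construction.
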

\subsection{Blow-up}
We now show that the constructed solution can only fail to exist globally if its $L^2_x$-norm blows up.
\begin{proposition}[Blow-up criterion]
\label{prop:blowup}
The implication
\begin{align*}
        \sup_{t \in [0,\tau^*)} \lVert u \rVert_{C([0,t];L^2_x)} < \infty \implies \sup_{t \in [0,\tau^*)} \lVert u \rVert_{L^6(0,t;L^6_x)} < \infty
\end{align*}
holds, $\mathbb{P}$-a.s.
\end{proposition}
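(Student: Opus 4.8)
The plan is to cover the random interval $[0,\tau^*)$ by finitely many deterministic subintervals of a small length $\delta$ and, on each of them, run a Strichartz bootstrap that closes because the cubic nonlinearity is $L^2$-subcritical in one space dimension; the price paid on each subinterval is a small exceptional probability originating from the stochastic convolution, which is eliminated at the end by sending $\delta \to 0$. For the reduction, set $A_M \coloneqq \{\sup_{t\in[0,\tau^*)}\lVert u(t)\rVert_{L^2_x}\le M\}$ for $M\in\N$; since $\bigcup_M A_M$ is exactly the event in the hypothesis, it suffices to show $\mathbb{P}\bigl[A_M\cap\{\lVert u\rVert_{L^6(0,\tau^*;L^6_x)}=\infty\}\bigr]=0$ for each $M$. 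Fix $M$ and $\delta\in(0,1]$, put $N\coloneqq\lceil T_0/\delta\rceil$ and $t_j\coloneqq j\delta$, and let $\rho_M\coloneqq\inf\{t\in[0,\tau^*):\lVert u(t)\rVert_{L^2_x}>M\}\wedge\tau^*$, so that $\rho_M=\tau^*$ on $A_M$ and $\lVert u(t)\rVert_{L^2_x}\le M$ for $t<\rho_M$.

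On each subinterval I restart the mild equation \eqref{eq:spfnlsmild} at $t_j$ — legitimate by Proposition~\ref{prop:local} and the semigroup property of $S$ — and estimate $y_j(\tau)\coloneqq\lVert u\rVert_{L^6(t_j,(t_j+\tau)\wedge\tau^*;L^6_x)}$ for $\tau\in[0,\delta]$ via Theorem~\ref{thm:strichartz} with $s=0$ and admissible pair $(6,6)$. Here I only need to control the $L^6_tL^6_x$-norm, since $\lVert u\rVert_{L^\infty(L^2_x)}\le M$ is available a priori on $A_M$: the homogeneous term is $\le C\lVert u(t_j)\rVert_{L^2_x}\le CM$ by \eqref{eq:strichhom}; the zeroth-order linear terms are $\le C\delta M$ by \eqref{eq:strichconv} (their $L^1(t_j,t_j+\tau;L^2_x)$-norm being $\le C\delta\sup\lVert u\rVert_{L^2_x}$); and the cubic term, by Hölder's inequality in space ($\tfrac56=\tfrac12+\tfrac16+\tfrac16$) and in time, is $\le C\lVert\,\lvert u\rvert^2u\,\rVert_{L^{6/5}(t_j,t_j+\tau;L^{6/5}_x)}\le CM\tau^{1/2}y_j(\tau)^2$ by \eqref{eq:strichconv}. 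The stochastic convolution contributes, on $A_M$, at most
\begin{equation*}
    \Xi_j \coloneqq \Bigl\lVert \int_{t_j}^{\cdot} S(\cdot-t')\,\mathbbm{1}_{[0,\rho_M)}(t')\,u(t')\,\Phi\,\d W(t')\Bigr\rVert_{L^6(t_j,\,t_{j+1}\wedge T_0;\,L^6_x)},
\end{equation*}
where the truncation by $\mathbbm{1}_{[0,\rho_M)}$ is inserted so that $\Xi_j$ has finite moments on all of $\Omega$: since $\lVert\mathbbm{1}_{[0,\rho_M)}u\rVert_{L^2(t_j,t_{j+1};L^2_x)}\le\delta^{1/2}M$ pointwise, the stochastic Strichartz estimate \eqref{eq:strichstoch} yields $\lVert\Xi_j\rVert_{L^q_\Omega}\le C_\ast\sqrt{q}\,\beta\,\delta^{1/2}M$ for all $q\ge2$, whence Lemma~\ref{lem:tail} gives $\mathbb{P}[\Xi_j>M]\le e^{-c_\ast/\delta}$ for $\delta$ small, with $c_\ast>0$ independent of $\delta$, $j$ and $N$. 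Combining the four bounds gives $y_j(\tau)\le(C_0+1)M+C_0M\delta^{1/2}y_j(\tau)^2$ on $A_M\cap\{\Xi_j\le M\}$, with $C_0$ independent of $\delta$, $j$, $M$; since $\tau\mapsto y_j(\tau)$ is continuous with $y_j(0)=0$, a standard continuity argument forces $y_j(\delta)\le2(C_0+1)M$ once $\delta$ is small enough depending only on $M$.

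Summing the sixth powers over $j=0,\dots,N-1$ then shows $\lVert u\rVert_{L^6(0,\tau^*;L^6_x)}\le N^{1/6}\cdot2(C_0+1)M<\infty$ on $A_M\cap\bigcap_{j=0}^{N-1}\{\Xi_j\le M\}$, so $A_M\cap\{\lVert u\rVert_{L^6(0,\tau^*;L^6_x)}=\infty\}\subseteq\bigcup_{j=0}^{N-1}\{\Xi_j>M\}$, and hence $\mathbb{P}\bigl[A_M\cap\{\lVert u\rVert_{L^6(0,\tau^*;L^6_x)}=\infty\}\bigr]\le Ne^{-c_\ast/\delta}=\lceil T_0/\delta\rceil e^{-c_\ast/\delta}$. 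Letting $\delta\to0^+$ and noting that the left-hand side does not depend on $\delta$ shows this probability vanishes; a union over $M\in\N$ finishes the proof. The main obstacle is precisely the stochastic convolution, which admits no pathwise bound in terms of $\lVert u\rVert$, so the deterministic NLS bootstrap — which would require a time step depending on the random size of the stochastic term — cannot be run directly. The fix is to combine the $\mathcal{O}(\sqrt{q})$ moment bound of \eqref{eq:strichstoch} with the Gaussian tail bound of Lemma~\ref{lem:tail} to make the exceptional probability on each subinterval $\mathcal{O}(e^{-c/\delta})$, and then to use that the sum of the $\sim T_0/\delta$ such probabilities still tends to $0$ as $\delta\to0$ — this is what upgrades a high-probability statement to an almost-sure one. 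The remaining technical points (the $\mathbbm{1}_{[0,\rho_M)}$-truncation and the restart identity for the mild equation) are routine and handled exactly as in Section~\ref{sec:local}.
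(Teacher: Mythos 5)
Your argument is correct, but it reaches the conclusion by a genuinely different mechanism than the paper's proof. The paper partitions $[0,\tau]$ by the \emph{random} stopping times \eqref{eq:defblowuptau_L6L6}, at which the running $L^6(\tau_N,t;L^6_x)$-norm reaches the fixed level $3KM$; on the event that infinitely many such increments occur, the increment lengths $\tau_{N+1}-\tau_N$ necessarily shrink to zero inside $[0,T_0]$, so the deterministic Strichartz contributions vanish and the stochastic convolution must exceed $KM$ infinitely often. That event is then killed by Chebyshev at the level of \emph{second} moments together with Fubini and Borel--Cantelli; no Gaussian tails are needed, and no smallness of the time step has to be arranged by hand. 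You instead use a \emph{deterministic} partition of mesh $\delta$ (chosen small depending on $M$ so that the cubic bootstrap $y_j \leq (C_0+1)M + C_0 M \delta^{1/2} y_j^2$ closes via a continuity argument), control the stochastic convolution on each of the $\sim T_0/\delta$ subintervals through the $\mathcal{O}(\sqrt{q})$ constant in \eqref{eq:strichstoch} and Lemma~\ref{lem:tail} to get per-interval exceptional probability $e^{-c_\ast/\delta}$, and upgrade the union bound $\lceil T_0/\delta\rceil e^{-c_\ast/\delta}$ to an almost-sure statement by letting $\delta \to 0$. Your route thus leans on the full strength of the $\sqrt{q}$-moment bound (which the paper reserves for the stability analysis), whereas the paper's Borel--Cantelli argument gets by with $q=2$; in exchange, your argument is quantitative on each fixed mesh and avoids the stopping-time bookkeeping. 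Two details worth stating explicitly if you write this up: the cutoff $\mathbbm{1}_{[0,\rho_M)}$ makes the truncated and the actual stochastic convolutions agree only on $A_M$ (local property of the It\^o integral), which is exactly where you use the bound; and $\tau \mapsto y_j(\tau)$ can jump to $+\infty$ precisely when $t_j+\tau$ reaches $\tau^*$, so the continuity argument should be run for $\lVert u \rVert_{L^6(t_j,t;L^6_x)}$ with $t < \tau^*$ and the uniform bound then passed to the limit $t \nearrow (t_j+\delta)\wedge\tau^*$ by monotone convergence; your choice of dual pair $(6/5,6/5)$ in \eqref{eq:strichconv} (versus the paper's $(1,2)$) is a harmless variation.
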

\begin{proof}
Fix some $M \geq 1$, and define the stopping time
\begin{subequations}
\label{eq:defblowuptau}
\begin{align}
        \label{eq:defblowuptau_L2}
        \tau &\coloneqq \sup \{\ t \in [0,\tau^*) : \lVert u \rVert_{C([0,t];L^2_x)} \leq M \ \},
\intertext{as well as a recursive sequence of stopping times according to $\tau_0 = 0$ and}
        \label{eq:defblowuptau_L6L6}
        \tau_{N+1} &\coloneqq \sup \{\ t \in [\tau_N,\tau] : \lVert u \rVert_{L^6(\tau_{N},t;L^6_x)} \leq 3 K M \ \}, \quad N \in \N_0,
\end{align}
\end{subequations}
where $K$ is the constant $C$ from the right-hand side of \eqref{eq:strichhom}.
Additionally, we define the event 
\begin{equation*}
A \coloneqq \{\omega \in \Omega : \tau_{N} < \tau, \, \forall N \in \N_0 \},
\end{equation*}
 and claim that $\mathbb{P}(A) = 0$.
To see this, we start the solution from time $\tau_N$ and get the $\mathbb{P}$-a.s. equality
\begin{align}
        u(t)&=S(t - \tau_N)u(\tau_N)-\int_{\tau_N}^t S(t-t')(i\nu u(t') +\epsilon(\gamma u(t') -\mu \overline{u}(t')) + \tfrac{1}{2}\beta^2 u(t')) \d t' \nonumber\\
    &\qquad+i \kappa \int_{\tau_N}^t S(t-t')(\lvert u(t')\rvert^2z(t'))\d t'-i\int_{\tau_N}^t S(t-t')u(t')\Phi \d W(t'). \nonumber \\
            \label{eq:blowup1_mild}
    &\eqqcolon T_1 + T_2 + T_3 + T_4,
\end{align}
for every $t \in [\tau_N, \tau^*)$.
Since the estimates from Lemma~\ref{lem:estimates} are invariant under time translation and the pair $(6,6)$ is admissible (cf.\@ \eqref{eq:defadmissible}), we see that
\begin{subequations}
\begin{align}
        \label{eq:blowup1_estT1}
        \lVert T_1 \rVert_{L^6(\tau_N,\tau_{N+1};L^6_x)} \overset{\eqref{eq:T0small}}&{\leq} K \lVert u(\tau_N) \rVert_{L^2_x} 
        \overset{\eqref{eq:defblowuptau_L2}}{\leq} K M, \\
        \label{eq:blowup1_estT2}
        \lVert T_2 \rVert_{L^6(\tau_N,\tau_{N+1};L^6_x)} \overset{\eqref{eq:T1smalllin}}&{\leq} C (\tau_{N+1}-\tau_N) \lVert u \rVert_{C([\tau_{N},\tau_{N+1}];L^2_x)} 
        \overset{\eqref{eq:defblowuptau_L2}}{\leq} C (\tau_{N+1}-\tau_N) M.
\end{align}
To estimate $T_3$ we use Theorem~\ref{thm:strichartz} and H\"older's inequality:
\begin{equation}
\label{eq:blowup1_estT3}
\begin{aligned}
    \lVert T_3 \rVert_{L^{6}(\tau_N,\tau_{N+1};L^6_x)} 
    \overset{\eqref{eq:strichconv}}&{\leq}C \lVert \lvert u \rvert^2 u \rVert_{L^{1}(\tau_N,\tau_{N+1};L^2_x)} 
    \leq C (\tau_{N+1} - \tau_N)^{\frac{1}{2}} \lVert u \rVert_{L^6(\tau_N,\tau_{N+1};L^6_x)}^3 \\
    \overset{\eqref{eq:defblowuptau_L6L6}}&{\leq} 27 \, C K^3M^3 (\tau_{N+1} - \tau_N)^{\frac{1}{2}}.
\end{aligned}
\end{equation}
\end{subequations}
Taking the $L^{6}(\tau_N,\tau_{N+1};L^{6}_x)$-norm of \eqref{eq:blowup1_mild} and using the triangle inequality along with \eqref{eq:blowup1_estT1}-\eqref{eq:blowup1_estT3} gives
\begin{equation}
\label{eq:blowup1_pwest}
\begin{aligned}
      \lVert u \rVert_{L^6(\tau_N,\tau_{N+1};L^6_x)} \leq K M &+ C M (\tau_{N+1}-\tau_N) + 27\, C K^3 M^3 (\tau_{N+1}-\tau_N)^{\frac{1}{2}} \\
      &+ \Bigl\lVert \int_{\tau_N}^{\cdot} S(\cdot-t')u(t')\Phi \d W(t') \Bigr\rVert_{L^6(\tau_N,\tau_{N+1};L^{6}_x)}.
\end{aligned}
\end{equation}
From \eqref{eq:defblowuptau_L6L6} it is clear that we must have the equality $\lVert u \rVert_{L^6(\tau_N,\tau_{N+1};L^6_x)} = 3KM$ for every $N$ if $\omega \in A$.
On the other hand, since $\tau_N$ is nondecreasing with $N$ and bounded by $T_0$, the second and third term on the right-hand side of \eqref{eq:blowup1_pwest} converge to zero as $N \to \infty$.
Combining these facts, we see that $\mathbb{P}(A)$ is bounded by the probability that the events
\begin{align*}
        A_N \coloneqq \Bigl\{ \omega \in \Omega : \Bigl\lVert \int_{\tau_N}^{\cdot} S({\cdot}-t')u(t')\Phi \d W(t') \Bigr\rVert_{L^6(\tau_N,\tau_{N+1};L^6_x)} \geq KM \Bigr\}
\end{align*}
occur for infinitely many $N$.
However, using Markov's inequality and Theorem~\ref{thm:strichartz}, we can estimate
\begin{align*}
        K^2M^2\mathbb{P}(A_N) &\leq \mathbb{E}\Bigl[\Bigl\lVert \int_{\tau_N}^{\cdot} S(\cdot - t')u(t')\Phi \d W(t') \Bigr\rVert_{L^6(\tau_N,\tau_{N+1};L^6_x)}^2 \Bigr] \\
        &\leq \mathbb{E}\Bigl[\Bigl\lVert \int_{0}^{\cdot} S(\cdot - t')\mathbbm{1}_{[\tau_{N},\tau_{N+1}]}(t')u(t')\Phi \d W(t') \Bigr\rVert_{L^6(0,T_0;L^6_x)}^2 \Bigr] \\
        \overset{\eqref{eq:strichstoch}}&{\leq}C^2 \mathbb{E}\Bigl[\lVert u \rVert_{L^2(\tau_N,\tau_{N+1};L^2_x)}^2 \Bigr].
\end{align*}
Since
\begin{align*}
        \sum_{N=0}^{\infty}\mathbb{E}\Bigl[\lVert u \rVert_{L^2(\tau_N,\tau_{N+1};L^2_x)}^2 \Bigr]
        \leq \mathbb{E}\Bigl[ \lVert u \rVert_{L^2(0,\tau;L^2_x)}^2 \Bigr] \overset{\eqref{eq:defblowuptau_L2}}{\leq} M^2 T_0 < \infty
\end{align*}
by Fubini's theorem, we see that the probabilities $\mathbb{P}(A_N)$ are summable.
Thus, $\mathbb{P}(A) = 0$ by the Borel--Cantelli lemma.
By definition of $A$, this implies $\sup_{t \in [0,\tau)}\lVert u \rVert_{L^6(0,t;L^6_x)} < \infty$, $\mathbb{P}$-a.s.
Recalling that $M$ was arbitrary, we finish the proof by choosing $M$ larger than $\sup_{t \in [0,\tau^*)}\lVert u \rVert_{C([0,t];L^2_x)}$ (if this quantity is finite) so that $\tau = \tau^*$ by \eqref{eq:defblowuptau_L2}.
\end{proof}

\subsection{Conservation}
\label{sec:conservation}
Having formulated a blow-up criterion in terms of the $L^2_x$-norm, we now show that this norm can be controlled pathwise.
This will yield global well-posedness of \eqref{eq:spfnlsmild} in combination with Proposition~\ref{prop:blowup}.
\begin{proposition}
\label{prop:conservation}
The inequality
\begin{align}
        \label{eq:L2conservation}
        \lVert u(t) \rVert_{L^2_x} \leq e^{-\epsilon (\gamma - \mu)t} \lVert u(0) \rVert_{L^2_x}
\end{align}
holds, $\mathbb{P}$-a.s., for every $t \in [0,\tau^*)$.
\end{proposition}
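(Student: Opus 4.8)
The strategy is to derive an energy inequality for $\lVert u(t)\rVert_{L^2_x}^2$ via It\^o's formula and then close the argument with Gr\"onwall's inequality. First I would localise: fix $t < \tau^*$; by \eqref{eq:deftaustar} there is $R \geq 1$ with $t < \tau_R$, and since no truncation occurs on $[0,\tau_R]$, Proposition~\ref{prop:local} shows that $u$ satisfies the untruncated mild equation \eqref{eq:spfnlsmild} on $[0,t]$ and that $u \in C([0,t];L^2_x) \cap L^6(0,t;L^6_x)$, $\PP$-a.s. Since an $L^2_x$-valued mild solution need not lie in $H^2_x$, It\^o's formula cannot be applied to $\lVert u\rVert_{L^2_x}^2$ directly, so I would regularise using the operators $J_\delta \coloneqq (1 - \delta\Delta)^{-1}$, $\delta > 0$. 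These map $L^2_x$ boundedly into $H^2_x$, are self-adjoint contractions on $L^2_x$ converging strongly to the identity as $\delta \downarrow 0$, and commute with $S(\cdot)$, with the operator $L$ from \eqref{eq:defLstraight}, and with complex conjugation.

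Applying $J_\delta$ to \eqref{eq:spfnlsmild} and using that $J_\delta u \in C([0,t];H^2_x)$ and $\lvert u\rvert^2 u \in L^2(0,t;L^2_x)$ (a consequence of $u \in L^6(0,t;L^6_x)$), one obtains that $J_\delta u$ is a strong solution of
\begin{align*}
    \d(J_\delta u) = \bigl[i\Delta J_\delta u + L J_\delta u - \tfrac{1}{2}\beta^2 J_\delta u + i\kappa J_\delta(\lvert u\rvert^2 u)\bigr]\d t - i J_\delta(u\Phi)\,\d W,
\end{align*}
to which It\^o's formula for the square of the $L^2_x$-norm applies and yields
\begin{align*}
    \lVert J_\delta u(t)\rVert_{L^2_x}^2 = \lVert J_\delta u(0)\rVert_{L^2_x}^2 + \int_0^t I_\delta(s)\,\d s + \int_0^t M_\delta(s)\,\d W(s),
\end{align*}
with $I_\delta(s) = 2\Real\langle J_\delta u, i\Delta J_\delta u + L J_\delta u + i\kappa J_\delta(\lvert u\rvert^2 u)\rangle_{L^2_x} - \beta^2\lVert J_\delta u\rVert_{L^2_x}^2 + \lVert J_\delta(u\Phi)\rVert_{\mathcal{L}_2(L^2(\R;\R);L^2_x)}^2$ and $M_\delta(s) = -2\Real\langle J_\delta u, i J_\delta(u\Phi\,\cdot)\rangle_{L^2_x} \in \mathcal{L}_2(L^2(\R;\R);\R)$.

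It remains to let $\delta \downarrow 0$. The Laplacian contribution $\Real\langle J_\delta u, i\Delta J_\delta u\rangle_{L^2_x} = \Real\bigl(i\lVert \partial_x J_\delta u\rVert_{L^2_x}^2\bigr)$ vanishes identically. Using $J_\delta u \to u$ uniformly on $[0,t]$, $J_\delta(\lvert u\rvert^2 u) \to \lvert u\rvert^2 u$ in $L^2(0,t;L^2_x)$, and dominated convergence, the cubic contribution of $I_\delta$ has integral converging to $2\int_0^t\Real\langle u, i\kappa\lvert u\rvert^2 u\rangle_{L^2_x}\,\d s = 0$ (since $\Real\langle u, i\kappa\lvert u\rvert^2 u\rangle_{L^2_x} = \Real(-i\kappa\lVert u\rVert_{L^4_x}^4) = 0$), the term $-\beta^2\lVert J_\delta u\rVert_{L^2_x}^2 + \lVert J_\delta(u\Phi)\rVert_{\mathcal{L}_2}^2$ converges to $-\beta^2\lVert u\rVert_{L^2_x}^2 + \lVert u\Phi\rVert_{\mathcal{L}_2}^2 = 0$ by \eqref{eq:hilbertschmidteq}, and $2\Real\langle J_\delta u, L J_\delta u\rangle_{L^2_x} \to 2\Real\langle u, Lu\rangle_{L^2_x} = -2\epsilon\gamma\lVert u\rVert_{L^2_x}^2 + 2\epsilon\mu\,\Real\!\int_\R u^2\,\d x \leq -2\epsilon(\gamma - \mu)\lVert u\rVert_{L^2_x}^2$, using $\bigl\lvert\Real\!\int_\R u^2\,\d x\bigr\rvert \leq \lVert u\rVert_{L^2_x}^2$. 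For the martingale, Cauchy--Schwarz and \eqref{eq:hilbertschmidteq} give $\lVert M_\delta(s)\rVert_{\mathcal{L}_2}^2 \leq 4\lVert u(s)\rVert_{L^2_x}^2\lVert u(s)\Phi\rVert_{\mathcal{L}_2}^2 = 4\beta^2\lVert u(s)\rVert_{L^2_x}^4 \in L^1(0,t)$, $\PP$-a.s., while each component $\langle J_\delta u, i J_\delta(u\Phi e_k)\rangle_{L^2_x} \to \langle u, i u\Phi e_k\rangle_{L^2_x}$ with $\Real\langle u, i u\Phi e_k\rangle_{L^2_x} = \Real\bigl(-i\!\int_\R \lvert u\rvert^2\Phi e_k\,\d x\bigr) = 0$; dominated convergence (in $k$ and in $s$) gives $\int_0^t\lVert M_\delta(s)\rVert_{\mathcal{L}_2}^2\,\d s \to 0$, $\PP$-a.s., hence $\int_0^t M_\delta\,\d W \to 0$ in probability. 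Passing to a subsequence $\delta_n \downarrow 0$ along which all these convergences hold $\PP$-a.s., we obtain
\begin{align*}
    \lVert u(t)\rVert_{L^2_x}^2 \leq \lVert u(0)\rVert_{L^2_x}^2 - 2\epsilon(\gamma - \mu)\int_0^t\lVert u(s)\rVert_{L^2_x}^2\,\d s, \qquad \PP\text{-a.s.},
\end{align*}
for each fixed $t < \tau^*$; by continuity of both sides in $t$ this holds simultaneously for all $t \in [0,\tau^*)$, $\PP$-a.s., and Gr\"onwall's inequality yields $\lVert u(t)\rVert_{L^2_x}^2 \leq e^{-2\epsilon(\gamma - \mu)t}\lVert u(0)\rVert_{L^2_x}^2$, which is \eqref{eq:L2conservation}.

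I expect the main obstacle to be the rigorous justification of It\^o's formula at this low regularity --- in particular, verifying that $J_\delta u$ is a genuine strong solution, and controlling the limit $\delta \downarrow 0$ in the cubic term and in the stochastic integral (the latter via the stochastic dominated-convergence argument above, if necessary after an additional localisation making $\lVert u\rVert_{C([0,\cdot];L^2_x)}$ bounded). An alternative, in keeping with the observation that our well-posedness proof closely follows~\cite{de_bouard_stochastic_1999}, would be to invoke the It\^o formula already established there for the Stratonovich SNLS equation and merely add the contribution of the linear term $Lu$, which is the sole source of the exponential factor $e^{-\epsilon(\gamma-\mu)t}$.
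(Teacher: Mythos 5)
Your argument is correct in substance and reaches the same integral inequality and Gr\"onwall step as the paper, but by a genuinely different route. You regularize with $J_\delta = (1-\delta\Delta)^{-1}$, upgrade $J_\delta u$ to a strong solution, apply the classical It\^o formula, and pass to the limit $\delta \downarrow 0$ --- the approach of de Bouard--Debussche, which costs you the verification that $J_\delta u$ is a genuine strong solution and the three limit arguments (cubic term, cancellation of the It\^o correction against $\lVert J_\delta(u\Phi)\rVert_{\mathcal{L}_2}^2$ via \eqref{eq:hilbertschmidteq}, and the stochastic integral tending to zero in probability), all of which you carry out correctly in outline. The paper avoids the regularization entirely by applying the \emph{mild} It\^o formula of Da Prato--Jentzen--R\"ockner~\cite{da_prato_mild_2019} directly to the mild solution $u_R$ of the truncated equation: since $S(t)$ is unitary on $L^2_x$, the functional $\mathcal{M}(u)=\tfrac{1}{2}\lVert u\rVert_{L^2_x}^2$ and its first and second Fr\'echet derivatives are invariant under composition with $S(t)$, so the mild It\^o formula coincides with the ordinary one with the $i\Delta$ term simply absent; the term-by-term cancellations (the $\nu$, cubic, and martingale terms vanish because they are purely imaginary, and the Stratonovich correction cancels the quadratic-variation trace by \eqref{eq:hilbertschmidteq}) are then identical to yours, followed by Cauchy--Schwarz on the $\mu\overline{u}$ term and Gr\"onwall. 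Your route buys independence from the mild It\^o machinery at the price of the $\delta$-limit bookkeeping and the localization you flag; the paper's route buys a shorter proof at the price of invoking~\cite{da_prato_mild_2019}. One presentational point: rather than ``fix $t<\tau^*$ and choose $R$ with $t<\tau_R$'' (where $R$ depends on $\omega$), it is cleaner to fix $R\geq 1$, run the argument for $u_R$ on $[0,\tau_R]$ (where the truncation in \eqref{eq:truncated} is inactive, so $u=u_R$ solves the untruncated equation), and then use \eqref{eq:deftaustar} to cover all of $[0,\tau^*)$; this also gives you the $L^2_\Omega$-integrability of $u_R$ from Proposition~\ref{prop:truncated} for free, largely obviating the extra localization you mention.
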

\begin{proof}
By definition of $\tau^*$ \eqref{eq:deftaustar}, it suffices to show the claim for any $R \geq 1$ and $t \in [0,\tau_R]$.
To do so, we apply to $u_R$ the mild It\^o formula proved by Da Prato, Jentzen and R\"ockner~\cite[Theorem 1]{da_prato_mild_2019} with the functional 
\begin{align*}
        \mathcal{M}(u) \coloneqq \tfrac{1}{2}\lVert u \rVert_{L^2_x}^2,
\end{align*}
which has first and second Fr\'echet derivatives given by
\begin{align*}
        \d \mathcal{M}(u)[h_1] = \Real \langle h_1, u \rangle_{L^2_x}, \qquad\quad \d^2 \mathcal{M}(u)[h_1,h_2] = \Real \langle h_1,h_2 \rangle_{L^2_x}.
\end{align*}
Since $S(t)$ is unitary on $L^2_x$, the equalities
\begin{alignat*}{2}
        \mathcal{M}(S(t)u) &= \tfrac{1}{2}\lVert S(t) u \rVert_{L^2_x}  & &= \tfrac{1}{2}\lVert u \rVert_{L^2_x}, \\
        \d \mathcal{M}(S(t)u)[S(t)h_1] &= \Real \langle S(t)h_1, S(t)u \rangle_{L^2_x} & &= \Real \langle h_1, u \rangle_{L^2_x}, \\
        \d^2 \mathcal{M}(S(t)u)[S(t)h_1,S(t)h_2] &= \Real \langle S(t)h_1,S(t)h_2 \rangle_{L^2_x} & &= \Real \langle h_1,h_2 \rangle_{L^2_x} 
\end{alignat*}
hold for every $t \in \R$ and $u$, $h_1$, $h_2 \in L^2_x$, and thus the mild It\^o formula coincides exactly with the regular It\^o formula, except without the term containing $i\Delta$.
Since additionally $u_R(t) = u(t)$ for all $t \in [0,\tau_R]$ by definition, this gives the $\mathbb{P}$-a.s. equality
\begin{subequations}
\label{eq:itoM}
\begin{align}
        \label{eq:itoMT1}
        \mathcal{M}(u(t)) = \mathcal{M}(u(0)) &+ \Real \int_0^t \langle - i \nu u(t') + i \kappa \lvert u(t') \rvert^2 u(t'), u(t')\rangle_{L^2_x} \d t' \\
        \label{eq:itoMT2}
        &- \Real \int_0^t \langle \epsilon(\gamma u(t') - \mu \overline{u}(t')), u(t')\rangle_{L^2_x} \d t' \\
        \label{eq:itoMT3}
        &- \tfrac{1}{2}\Real \int_0^t \langle \beta^2 u(t'), u(t')\rangle_{L^2_x}\d t' \\
        \label{eq:itoMT4}
        & - \Real \int_0^t \langle i u(t')\Phi \d W(t'), u(t')\rangle_{L^2_x} \\
        \label{eq:itoMT5}
        &+ \tfrac{1}{2}\Real \int_0^t \lVert u(t') \Phi \rVert_{\mathcal{L}_2(L^2(\R;\R);L^2_x)}^2 \d t'.
\end{align}
\end{subequations}
for all $t \in [0,\tau_R]$.
From the fact that $\langle u v, w \rangle_{L^2_x} = \langle v, \overline{u} w \rangle_{L^2_x}$, we see that
\begin{align*}
        \langle- i \nu u(t') + i \kappa \lvert u(t') \rvert^2 u(t'), u(t')\rangle_{L^2_x} 
        =  - i \nu \lVert u(t') \rVert_{L^2_x}^2 +  i \kappa \lVert u(t')  \rVert_{L^4_x}^4.
\end{align*}
Taking the real part shows that the second term on the right-hand side of \eqref{eq:itoMT1} vanishes.
Similarly, we can rewrite
\begin{align*}
\langle i u(t')\Phi \d W(t'), u(t')\rangle_{L^2_x} = i \langle \Phi \d W(t'), \lvert u(t') \rvert^2 \rangle_{L^2_x}.
\end{align*}
Since $W(t')$ and $\phi$ (recall \eqref{eq:defphi}) are both real-valued, the inner product on the right-hand side always results in a real scalar.
Thus, \eqref{eq:itoMT4} also vanishes.
Finally, from Proposition~\ref{prop:hs} we see that
\begin{equation*}
\lVert u(t') \Phi \rVert_{\mathcal{L}_2(L^2(\R;\R);L^2_x)}^2 
\overset{\eqref{eq:hilbertschmidteq}}{=} \beta^2 \lVert u(t') \rVert_{L^2_x}^2
= \langle \beta^2 u(t'), u(t') \rangle_{L^2_x},
\end{equation*}
so that \eqref{eq:itoMT3} and \eqref{eq:itoMT5} cancel exactly.
Combining all this, \eqref{eq:itoM} simplifies to
\begin{align*}
        \mathcal{M}(u(t)) &= \mathcal{M}(u(0)) - \Real \int_0^t \langle \epsilon(\gamma u(t') - \mu \overline{u}(t')), u(t')\rangle_{L^2_x} \d t' \\
        &= \mathcal{M}(u(0)) - \epsilon \int_0^t \gamma \lVert u(t') \rVert_{L^2_x}^2 - \mu \Real \langle \overline{u}(t'), u(t')\rangle_{L^2_x} \d t'.
\end{align*}
Applying the Cauchy--Schwarz inequality allows us to deduce
\begin{align*}
        \lVert u(t) \rVert_{L^2_x}^2 \leq \lVert u(0) \rVert_{L^2_x}^2 - 2 \epsilon \int_0^t (\gamma - \mu) \lVert u(t') \rVert_{L^2_x}^2 \d t',
\end{align*}
which implies \eqref{eq:L2conservation} after using Gr\"onwall's lemma and taking square roots.
\end{proof}
\begin{proof}[Proof of Theorem~\ref{thm:wellposed}]
From \eqref{eq:L2conservation} it is immediate that $\mathbb{P}\bigl[\sup_{t \in [0,\tau^*)} \lVert u(t) \rVert_{L^2} = \infty\bigr] = 0$.
Thus, by Proposition~\ref{prop:blowup} the solutions constructed in Proposition~\ref{prop:local} exist on the entire interval $[0,T_0]$, $\mathbb{P}$-a.s.
It only remains to lift the assumption that $u_0 \in L^2_{\Omega}$.
This can be done by considering the initial conditions $u_0^M = \mathbbm{1}_{\lVert u_0 \rVert_{L^2_x} \leq M} u_0$ and taking $M$ to infinity, using pathwise uniqueness to patch together the solutions.
Since this is a well-known standard procedure, we will not elaborate.
\end{proof}

\section{Proof of stability}
\label{sec:proofstability}
\subsection{Asymptotic expansion}
\label{sec:proofasym}
\begin{proof}[Proof of Theorem~\ref{thm:asym}]
Throughout the proof, we will use the notation $A \lesssim B$ to denote that there exists a constant $C$, independent of $v_1$, $v_2$, $\eps$, $\sigma$, and $c_1$, such that $A \leq C B$.

Fix $T \in (0,\infty)$ and an admissible pair $(r,p)$ with $p \in [6,\infty)$.
If we prove the theorem for such $p$, it follows from an iterated application of H\"older's inequality that the theorem also holds for admissible pairs with $p \in [2,6)$, so the restriction on $p$ does not entail any loss of generality.

The existence and uniqeness of the mild solution $v_1 \in C([0,T];L^2_x)$ to \eqref{eq:dv1} follows from standard theory (see for example~\cite[Theorem 5.4]{da_prato_stochastic_1992}.
Using \eqref{eq:PPstrichhom}, \eqref{eq:PPstrichstoch}, and \eqref{eq:PP0strichhom}, \eqref{eq:PP0strichstoch} of Propositions~\ref{prop:strichartzPP} and~\ref{prop:strichartzPP0}, we obtain from \eqref{eq:v1} that $v_1 \in L^r(0,T;L^p_x)$, so that also $v_1 \in L^6(0,T;L^6_x)$, $\mathbb{P}$-a.s.
Combining this with H\"older's inequality shows
\begin{align*}
    \lVert \{ u^*, v_1, v_1 \}\rVert_{L^1(0,T;L^2_x)} 
    \leq 3 T^{\frac{1}{2}} \lVert u^* \rVert_{L^6(0,T;L^6_x)} \lVert v_1 \rVert_{L^6(0,T;L^6_x)}^2.
\end{align*}
By a standard localization procedure we can also get integrability in $\omega$, so that the terms on the right-hand side of \eqref{eq:v2} are well-defined and this is indeed the unique solution for $v_2$.
Again, $v_2 \in L^r(0,T;L^p_x)$ by Propositions~\ref{prop:strichartzPP} and~\ref{prop:strichartzPP0}.

From the definition $z(t) \coloneqq u^* - \sigma v_1(t) - \sigma^2 v_2(t)$, it follows that $z$ satisfies \eqref{eq:dz} in the mild sense, meaning for every $t \in [0,T]$ we have the $\mathbb{P}$-a.s. equality
\begin{equation}
    \label{eq:zmild}
    \begin{aligned}
    z(t) &= \int_0^t P(t-t')i \kappa R(t')\d t' - \tfrac{1}{2}\beta^2 \int_0^t P(t-t')(\sigma^3 v_1 +\sigma^4v_2 +\sigma^2 z)\d t' \\
    & \ - \int_0^t P(t-t')i( \sigma^3v_2 + \sigma z)\Phi \d W(t') \eqqcolon T_1 + T_2 + T_3.
    \end{aligned}
\end{equation}
To show \eqref{eq:zprobest} we define the stopping time $\tau \coloneqq \min\{\tau_{v_1}, \tau_{v_2}, \tau_z\}$, 
and notice that
\begin{equation*}
 \mathbb{P}[\tau_z < \min\{\tau_{v_1},\tau_{v_2} \}] = \mathbb{P}[\tau_z < T, \tau_z \leq \tau].
 \end{equation*}
To estimate the latter probability, we first estimate $T_1$ and $T_2$ on the interval $[0,\tau]$.
We assume $\eps' \leq 1$ and $c_1 \geq 1$, so that we can estimate $\sigma^m \leq \eps^m \leq 1$ for any $m \geq 0$.
We will use this frequently and without further mention.

To estimate $T_2$, note that by Propositions~\ref{prop:strichartzPP} and~\ref{prop:strichartzPP0} we have
\begin{equation}
\label{eq:T2est}
    \lVert T_2 \rVert_{L^{\infty}(0,\tau;L^2_x) \cap L^r(0,\tau;L^p_x)}
    \overset{\eqref{eq:PPstrichconv},\eqref{eq:PP0strichconv}}{\lesssim} \lVert \sigma^3 v_1 + \sigma^4 v_2 + \sigma^2 z \rVert_{L^1(0,\tau;L^2_x)} 
    \overset{\eqref{eq:asymtimes}}{\lesssim} \eps^3.
\end{equation}
Using Propositions~\ref{prop:strichartzPP} and~\ref{prop:strichartzPP0} again, carefully inspecting every term in \eqref{eq:defR} and using H\"older's inequality on the triple bracket, we see that we can also estimate
\begin{equation}
    \label{eq:T1est}
    \lVert T_1 \rVert_{L^{\infty}(0,\tau;L^2_x) \cap L^r(0,\tau;L^p_x)} 
    \overset{\eqref{eq:PPstrichconv},\eqref{eq:PP0strichconv}}{\lesssim} \lVert R \rVert_{L^1(0,\tau;L^2_x)} 
    \overset{\eqref{eq:asymtimes}}{\lesssim} \eps^3 + c_1^3 \eps^4.
\end{equation}
Combining \eqref{eq:zmild}, \eqref{eq:T2est}, and \eqref{eq:T1est} with the triangle inequality we get the estimate
\begin{equation}
    \label{eq:zest}
    \lVert z \rVert_{L^{\infty}(0,\tau;L^2_x) \cap L^r(0,\tau;L^p_x)} \leq C (\eps^3+c_1^3 \eps^4) + \lVert T_3 \rVert_{L^{\infty}(0,\tau;L^2_x) \cap L^r(0,\tau;L^p_x)}
\end{equation}
for some constant $C$ which is independent of $v_1$, $v_2$, $\eps$, $\sigma$, and $c_1$.
This allows us to set $c_1 = 4C$ and $\eps' = c_1^{-3}$.
Suppose now that $\tau_z < T$ and $\tau_z \leq \tau$.
Then since ${z \in C([0,T];L^2_x) \cap L^r(0,T;L^p_x)}$, we have by continuity:
\begin{equation*}
    c_1 \eps^3 
    \overset{\eqref{eq:tauz}}{=} \lVert z \rVert_{L^{\infty}(0,\tau_z;L^2_x) \cap L^r(0,\tau_z;L^p_x)} 
    \overset{\eqref{eq:zest}}{\leq} \tfrac{1}{2}c_1 \eps^3 + \lVert T_3 \rVert_{L^{\infty}(0,\tau;L^2_x) \cap L^r(0,\tau;L^p_x)}.
\end{equation*}
Since this can only happen if $T_3$ is sufficiently large, we can now estimate
\begin{align*}
    \PP [\tau_z < T,\tau_z \leq \tau] 
    &\leq \PP \bigl[\lVert \eps^{-3} T_3 \rVert_{L^{\infty}(0,\tau;L^2_x) \cap L^r(0,\tau;L^p_x)} \geq \tfrac{1}{2}c_1\bigr] \\
    &= \PP \bigl[\lVert \sigma^{-1} \eps^{-2} T_3 \rVert_{L^{\infty}(0,\tau;L^2_x) \cap L^r(0,\tau;L^p_x)} \geq \tfrac{1}{2}c_1 \sigma^{-1} \eps  \bigr].
\end{align*}
It only remains to estimate the latter probability.
We note that for $t \leq \tau$ we have the equality
\begin{equation*}
    \sigma^{-1}\eps^{-2} T_3(t) \overset{\eqref{eq:zmild}}{=} -\int_0^t P(t-t')(\mathbbm{1}_{[0,\tau]}(t')i(\sigma^2\eps^{-2}v_2(t') + \eps^{-2} z(t')))\Phi \d W(t').
\end{equation*}
After estimating the integrand as
\begin{align*}
    \lVert \mathbbm{1}_{[0,\tau]}(t')(\sigma^2\eps^{-2}v_2(t') + \eps^{-2} z(t')) \rVert_{L^{\infty}_{\Omega}(L^2(0,T;L^2_x))} 
    \overset{\eqref{eq:asymtimes}}{\leq} 1 + c_1 \eps \leq 2,
\end{align*}
it follows from \eqref{eq:PPstrichstoch}, \eqref{eq:PP0strichstoch} and Lemma~\ref{lem:tail} that the Gaussian tail estimate
\begin{align*}
    \PP [\tau_z < T,\tau_z \leq \tau] \leq \exp(-c_2 c_1^2 \sigma^{-2} \eps^2 ),
\end{align*}
holds for some $c_2 > 0$ which is independent of $\eps$, $\sigma$, $c_1$, as long as $c_1 \sigma^{-1} \eps$ is sufficiently large.
But since $\eps \sigma^{-1} \geq 1$, this can be accomplished by re-choosing $c_1$ to be larger than before if necessary (and also re-choosing $\eps' = c_1^{-3}$).
\end{proof}
\subsection{Orbital stability}
\label{sec:prooforbstab}
Before we prove Theorem~\ref{thm:relaxation}, we isolate some convolution estimates which are used multiple times in the proof.
These estimates essentially follow from Young's convolution inequality and the exponential decay of $P(t)\Pi$ (which we have not used before this point).
\begin{lemma}
\label{lem:youngdecay}
Let $r \in [1,\infty]$. There exists a constant $C$, such that the estimates
\begin{subequations}
\begin{align}
    \label{eq:detdecayconvest1}
    \Bigl\lVert \int_0^{\cdot} P(\cdot-t')\Pi f(t') \d t' \Bigr\rVert_{L^{\infty}(0,T;L^q_{\Omega}(L^2_x))} &\leq C\min \{T^{\frac{1}{r'}}, 1\} \lVert f \rVert_{L^r(0,T;L^q_{\Omega}(L^2_x))}, \\
    \label{eq:detdecayconvest2}
    \Bigl\lVert \int_0^{\cdot} P(\cdot-t')\Pi g(t') \d t' \Bigr\rVert_{L^{\infty}(0,T;L^q_{\Omega}(L^2_x))} &\leq C\min \{T^{\frac{1}{r'}}, 1\} \lVert g \rVert_{L^q_{\Omega}(L^r(0,T;L^2_x))}, \\
    \label{eq:stochdecayconvest}
    \Bigl\lVert \int_0^{\cdot} P(\cdot-t')\Pi h(t') \Phi \d W(t') \Bigr\rVert_{L^{\infty}(0,T;L^q_{\Omega}(L^2_x))} &\leq C \sqrt{q}\beta \min \{T^{\frac{1}{2}},1 \} \lVert h \rVert_{L^{\infty}(0,T;L^q_{\Omega}(L^2_x))},
\end{align}
\end{subequations}
hold for any $q \in [2,\infty)$, $T \in (0,\infty)$, $f \in L^r(0,T;L^q_{\Omega}(L^2_x))$, $g \in L^q_{\Omega}(L^r(0,T;L^2_x))$, ${h \in L^{\infty}(0,T;L^q_{\Omega}(L^2_x))}$, and $\phi \in L^2(\R;\R)$ (recall \eqref{eq:defphi}).
\end{lemma}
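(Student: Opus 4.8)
The plan is to derive all three estimates from the single operator bound $\lVert P(s)\Pi\rVert_{\mathcal{L}(L^2_x)}\le Me^{-as}$ for $s\ge 0$, provided by \eqref{eq:Pdecay}, combined with Young's convolution inequality applied to the kernel $k(s):=Me^{-as}\mathbbm{1}_{s\ge 0}$. The only thing that changes between the three cases is the order in which the spatial ($L^2_x$), temporal ($L^r$ or $L^\infty$ on $[0,T]$), and probabilistic ($L^q_\Omega$) norms are taken, which in turn dictates the direction in which Minkowski's integral inequality is used. In every case the $T$-dependence arises from $\lVert k\rVert_{L^{r'}(0,T)} = M\left(\tfrac{1-e^{-ar'T}}{ar'}\right)^{1/r'}\le M\min\{T^{1/r'},(ar')^{-1/r'}\}\le C\min\{T^{1/r'},1\}$, where $C$ depends on $M$, $a$, and $r$ but not on $T$, $q$, or the input functions.

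For \eqref{eq:detdecayconvest1} I would start from the pointwise-in-$(t,\omega)$ bound $\lVert\int_0^t P(t-t')\Pi f(t')\d t'\rVert_{L^2_x}\le\int_0^t Me^{-a(t-t')}\lVert f(t')\rVert_{L^2_x}\d t'$, then apply Minkowski's integral inequality to move the $L^q_\Omega$-norm inside the time integral, reducing the left-hand side to $(k*\varphi)(t)$ with $\varphi(s):=\lVert f(s)\rVert_{L^q_\Omega(L^2_x)}$; taking the supremum over $t\in[0,T]$ and applying Young's inequality with exponents $r'$ and $r$ (note $\tfrac1{r'}+\tfrac1r=1$) then finishes the estimate. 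For \eqref{eq:detdecayconvest2} the roles of $L^q_\Omega$ and the temporal norm are interchanged on the right-hand side, so instead I would apply Young's inequality \emph{pathwise} first, bounding $\sup_{t\le T}\int_0^t Me^{-a(t-t')}\lVert g(t',\omega)\rVert_{L^2_x}\d t'\le\lVert k\rVert_{L^{r'}(0,T)}\lVert g(\cdot,\omega)\rVert_{L^r(0,T;L^2_x)}$, and only afterwards take the $L^q_\Omega$-norm, using $\lVert F\rVert_{L^\infty(0,T;L^q_\Omega(L^2_x))}\le\bigl\lVert\sup_{t\le T}\lVert F(t)\rVert_{L^2_x}\bigr\rVert_{L^q_\Omega}$. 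The kernel norm is estimated exactly as above.

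For the stochastic estimate \eqref{eq:stochdecayconvest}, since $P(t)\Pi$ is not unitary it cannot be factored out of the stochastic convolution, so instead I would absorb its decay into the Hilbert--Schmidt norm of the integrand: using $\lVert AB\rVert_{\mathcal{L}_2}\le\lVert A\rVert_{\mathcal{L}}\lVert B\rVert_{\mathcal{L}_2}$ together with \eqref{eq:Pdecay} and \eqref{eq:hilbertschmidteq}, one has $\lVert P(t-t')\Pi h(t')\Phi\rVert_{\mathcal{L}_2(L^2(\R;\R);L^2_x)}\le M\beta e^{-a(t-t')}\lVert h(t')\rVert_{L^2_x}$. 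For each fixed $t\in[0,T]$ the Burkholder--Davis--Gundy inequality in the form of~\cite[Theorem 1.1]{seidler_exponential_2010}, which yields a constant of order $\sqrt q$, bounds $\lVert\int_0^t P(t-t')\Pi h(t')\Phi\d W(t')\rVert_{L^q_\Omega(L^2_x)}$ by $C\sqrt q\,\bigl\lVert(\int_0^t\lVert P(t-t')\Pi h(t')\Phi\rVert_{\mathcal{L}_2}^2\d t')^{1/2}\bigr\rVert_{L^q_\Omega}$; since $q\ge 2$, Minkowski's integral inequality interchanges $L^q_\Omega$ with $L^2(0,t)$, so this is at most $C\sqrt q\,M\beta(\int_0^t e^{-2a(t-t')}\lVert h(t')\rVert_{L^q_\Omega(L^2_x)}^2\d t')^{1/2}\le C\sqrt q\,M\beta\,\lVert h\rVert_{L^\infty(0,T;L^q_\Omega(L^2_x))}\min\{t^{1/2},(2a)^{-1/2}\}$. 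Taking the supremum over $t\in[0,T]$ and bounding $\min\{t^{1/2},(2a)^{-1/2}\}\le C'\min\{T^{1/2},1\}$ gives the claim (here $h$ is as usual assumed predictable, so that the stochastic integral is well defined). I do not expect a genuine obstacle: the lemma is essentially a bookkeeping exercise, and the only points that need care are getting the direction of Minkowski's integral inequality right in each case — which is precisely what forces the two different argument structures for \eqref{eq:detdecayconvest1} versus \eqref{eq:detdecayconvest2} — and, in the stochastic estimate, organizing things so that the non-unitarity of $P(t)\Pi$ does no harm: one estimates a \emph{fixed-time} moment rather than a maximal one (which is all the left-hand side requires) and feeds the exponential decay into the Hilbert--Schmidt norm before invoking the $\mathcal{O}(\sqrt q)$ Burkholder--Davis--Gundy bound, rather than attempting to pull the semigroup out of the integral.
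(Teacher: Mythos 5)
Your proposal is correct and follows essentially the same route as the paper: a kernel bound $\lVert P(s)\Pi\rVert_{\mathcal{L}(L^2_x)}\leq Me^{-as}$ combined with Young's inequality (with the $L^q_\Omega$-norm taken inside or outside the time integral exactly as in the paper's two deterministic cases), and for the stochastic term a fixed-time application of Seidler's $\mathcal{O}(\sqrt q)$ bound followed by Minkowski in the $q\geq 2$ direction and the $L^2(0,t)$-norm of the decaying kernel. The only (cosmetic) difference is that you feed the decay into the Hilbert--Schmidt norm via the ideal property $\lVert AB\rVert_{\mathcal{L}_2}\leq\lVert A\rVert_{\mathcal{L}}\lVert B\rVert_{\mathcal{L}_2}$, whereas the paper first invokes \eqref{eq:hilbertschmidteq} and then factors out $\lVert P(t-\cdot)\Pi\rVert_{L^2(0,t;\mathcal{L}(L^2_x))}$; both give the same bound.
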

\begin{proof}
First we compute
\begin{align}
    \label{eq:alpharT}
    \alpha_r(T) \coloneqq 
    \lVert P(\cdot)\Pi \rVert_{L^r(0,T;\mathcal{L}(L^2_x))} 
    \overset{\eqref{eq:Pdecay}}{\leq} \lVert M\exp(-a \cdot) \rVert_{L^r(0,T)} 
    \leq C \min \{T^{\frac{1}{r}}, 1 \},
\end{align}
for some $C$ which does not depend on $T$.
It then follows from Young's convolution inequality that
\begin{align*}
    \Bigl\lVert \int_0^{\cdot} P(\cdot-t')\Pi f(t') \d t' \Bigr\rVert_{L^{\infty}(0,T;L^q_{\Omega}(L^2_x))} 
    \leq \alpha_{r'}(T) \lVert f \rVert_{L^r(0,T;L^q_{\Omega}(L^2_x))},
\end{align*}
and also 
\begin{align*}
    \Bigl\lVert \int_0^{\cdot} P(\cdot-t')\Pi g(t') \d t' \Bigr\rVert_{L^{\infty}(0,T;L^q_{\Omega}(L^2_x))}
    &\leq \Bigl\lVert \int_0^{\cdot} P(\cdot-t')\Pi g(t') \d t' \Bigr\rVert_{L^q_{\Omega}(L^{\infty}(0,T;L^2_x))} \\
    &\leq \alpha_{r'}(T) \lVert g \rVert_{L^q_{\Omega}(L^r(0,T;L^2_x))},
\end{align*}
which in combination with \eqref{eq:alpharT} shows \eqref{eq:detdecayconvest1} and \eqref{eq:detdecayconvest2}.
Finally, for $t \in [0,T]$ we estimate
\begin{align*}
    \Bigl\lVert \int_0^t P(t-t')\Pi h \Phi \d W(t') \Bigr\rVert_{L^q_{\Omega}(L^2_x)}
    &\leq C \sqrt{q} \lVert P(t - \cdot) \Pi h(\cdot) \Phi \rVert_{L^q_{\Omega}(L^2(0,t;\mathcal{L}_2(L^2(\R;\R);L^2_x)))} \\
    \overset{\eqref{eq:hilbertschmidteq}}&{=} C \sqrt{q} \beta \lVert P(t - \cdot) \Pi h(\cdot) \rVert_{L^q_{\Omega}(L^2(0,t;L^2_x))} \\
    &\leq C \sqrt{q} \beta \lVert P(t - \cdot) \Pi h(\cdot) \rVert_{L^2(0,t;L^q_{\Omega}(L^2_x))} \\
    &\leq C \sqrt{q}\beta \lVert P(t - \cdot)\Pi \rVert_{L^2(0,t;\mathcal{L}(L^2_x))}\lVert h \rVert_{L^{\infty}(0,T;L^q_{\Omega}(L^2_x))} \\
    &= C \sqrt{q}\beta \alpha_2(T)  \lVert h \rVert_{L^{\infty}(0,T;L^q_{\Omega}(L^2_x))},
\end{align*}
where we have used~\cite[Theorem 1.1]{seidler_exponential_2010} for the first inequality, and the fact that $q \geq 2$ for the third inequality.
Taking the supremum over $t \in [0,T]$ and using \eqref{eq:alpharT} gives \eqref{eq:stochdecayconvest}.
\end{proof}
\begin{proof}[Proof of Theorem~\ref{thm:relaxation}]
    From Proposition~\ref{prop:P} we obtain 
    \begin{align*}
        v_1 &= \mathcal{P} [v_1]u^*_x + \Pi v_1, \\
        v_2 &= \mathcal{P} \bigl[v_2 - \tfrac{1}{2}\mathcal{P}[v_1]^2u^*_{xx}\bigr]u^*_x + \tfrac{1}{2}\mathcal{P}[v_1]^2u^*_{xx}+ \Pi \bigl(v_2 - \tfrac{1}{2}\mathcal{P}[v_1]^2u^*_{xx}\bigr).
    \end{align*}
    If we define
    \begin{align*}
        a_1 &\coloneqq \mathcal{P} [v_1], & w_1 &\coloneqq \Pi v_1, \\
        a_2 &\coloneqq \mathcal{P} \bigl[v_2 - \tfrac{1}{2}\mathcal{P}[v_1]^2u^*_{xx}\bigr],
        & w_2 &\coloneqq \Pi \bigl(v_2 - \tfrac{1}{2}\mathcal{P}[v_1]^2u^*_{xx}\bigr),
    \end{align*}
    then \eqref{eq:ansatz2} and \eqref{eq:Pi0wicond} hold.
    Equations \eqref{eq:adef} and \eqref{eq:wdef} follow by substitution using \eqref{eq:v1v2} and noting that $\Pi$ commutes with $P(t)$.
    
    We will now show \eqref{eq:west}. Throughout the proof, $A \lesssim B$ means that there exists a constant $C$, independent of $v_{1,0}$, $v_{2,0}$, $t$, $q$ and $\phi$ (recall \eqref{eq:defphi}) such that $A \leq C B$.
    We first estimate $w_1$ as follows:
    \begin{align*}
        \lVert w_1(t) \rVert_{L^q_{\Omega}(L^2_x)} 
        \overset{\eqref{eq:w1def}}&{\leq} \lVert P(t)\Pi v_{1,0} \rVert_{L^q_{\Omega}(L^2_x)} + \Bigl\lVert \int_0^t P(t-t')\Pi i u^* \Phi \d W(t') \Bigr\rVert_{L^q_{\Omega}(L^2_x)} \\
        \overset{\eqref{eq:Pdecay},\eqref{eq:stochdecayconvest}}&{\lesssim} 
        e^{-at} \lVert v_{1,0} \rVert_{L^q_{\Omega}(L^2_x)} + \sqrt{q}\beta \min\{t^{\frac{1}{2}}, 1 \},
    \end{align*}
    which is \eqref{eq:w1est}.
    In order to show \eqref{eq:w2est}, we will need two intermediate estimates. 
    Firstly, by Proposition~\ref{prop:strichartzPP} we have
    \begin{equation}
    \label{eq:w1strichest}
    \begin{aligned}
        \lVert w_1 \rVert_{L^q_{\Omega}(L^6(0,t;L^6_x))}
        &\leq \lVert P(\cdot)\Pi v_{1,0} \rVert_{L^q_{\Omega}(L^6(0,t;L^6_x))} + \Bigl\lVert \int_0^{\cdot} P({\cdot}-t')\Pi i u^* \Phi \d W(t')\Bigr\rVert_{L^q_{\Omega}(L^6(0,t;L^6_x))} \\
        \overset{\eqref{eq:PPstrichhom},\eqref{eq:PPstrichstoch}}&{\lesssim} \lVert v_{1,0} \rVert_{L^q_{\Omega}(L^2_x)} + \sqrt{q} \beta \lVert u^* \rVert_{L^q_{\Omega}(L^2(0,t;L^2_x))} \\
        &= \lVert v_{1,0} \rVert_{L^q_{\Omega}(L^2_x)} + \sqrt{q}\beta  t^{\frac{1}{2}}.
    \end{aligned}
    \end{equation}
    It also follows from~\cite[Theorem 1.1]{seidler_exponential_2010} that
    \begin{equation}
        \label{eq:a1est}
        \lVert a_1(t) \rVert_{L^q_{\Omega}} \overset{\eqref{eq:a1def}}{\lesssim} \lVert v_{1,0}\rVert_{L^q_{\Omega}(L^2_x)} + \sqrt{q} \beta t^{\frac{1}{2}}. 
    \end{equation}
    Now we have all the ingredients needed to estimate $w_2$.
    We first replace the occurrences of $v_1$ in \eqref{eq:w2def} by $w_1 + a_1 u^*_x$, in accordance with \eqref{eq:ansatz2}.
    This results in the equality
    \begin{subequations}
    \label{eq:w2decomp}
    \begin{align*}
        w_2(t) &= P(t)\Pi v_{2,0} \\
        &+ \int_0^t P(t-t')\Pi i \kappa \{u^*, w_1, w_1 \}\d t' \\
        &+ 2\int_0^t P(t-t') \Pi i \kappa a_1 \{u^*, u^*_x, w_1 \}\d t' \\
        &+  \int_0^t P(t-t') \Pi i \kappa a_1^2 \{u^*, u^*_x,u^*_x \}\d t' \\
        &-  \tfrac{1}{2}\int_0^t P(t-t') \Pi \beta^2u^*\d t' \\
        &- \int_0^t P(t-t')\Pi i w_1 \Phi \d W(t') \\
        &- \int_0^t P(t-t')\Pi i a_1 u^*_x \Phi \d W(t') \\
        & - \tfrac{1}{2}a_1^2 \Pi u^*_{xx}.
    \end{align*}
    \end{subequations}
    We estimate the $L^q_{\Omega}(L^2_x)$-norm of each term separately, which will show \eqref{eq:w2est}. First, we have
    \begin{align*}
        \lVert P(t)\Pi v_{2,0} \rVert_{L^q_{\Omega}(L^2_x)} \overset{\eqref{eq:Pdecay}}&{\lesssim} e^{-at} \lVert v_{2,0}\rVert_{L^q_{\Omega}(L^2_x)}, \\
        \lVert a_1(t)^2 \Pi u^*_{xx} \rVert_{L^q_{\Omega}(L^2_x)} &\lesssim \lVert a_1(t)^2 \rVert_{L^{q}_{\Omega}} = \lVert a_1(t) \rVert_{L^{2q}_{\Omega}}^2 
        \overset{\eqref{eq:a1est}}{\lesssim} \lVert v_{1,0}\rVert_{L^{2q}_{\Omega}(L^2_x)}^2 + q \beta^2 t.
    \end{align*}
    Next, we use our first intermediate estimate on the term which is quadratic in $w_1$.
    \begin{align*}
        \Bigl\lVert \int_0^t P(t-t')\Pi i \kappa \{u^*, w_1, w_1 \}\d t' \Bigr\rVert_{L^q_{\Omega}(L^2_x)}
        \overset{\eqref{eq:detdecayconvest2}}&{\lesssim} \lVert \{ u^*, w_1, w_1 \} \lVert_{L^{q}_{\Omega}(L^3(0,t;L^2_x))} \\
        \lesssim \lVert u^* \rVert_{L^{\infty}(0,t;L^6_x)} \lVert w_1 \rVert_{L^{2q}_{\Omega}(L^6(0,t;L^6_x))}^2
        \overset{\eqref{eq:w1strichest}}&{\lesssim} \lVert v_{1,0} \rVert_{L^{2q}_{\Omega}(L^2_x)}^2 + q \beta^2 t,
    \end{align*}
    where we have used H\"older's inequality for the second step.
    We also estimate
    \begin{align*}
        \Bigl\lVert \int_0^t P(t-t')\Pi i \kappa a_1 \{u^*, u^*_x, w_1 \}\d t' \Bigr\rVert_{L^q_{\Omega}(L^2_x)}
        \overset{\eqref{eq:detdecayconvest1}}&{\lesssim} \lVert a_1 \{ u^*, u^*_x, w_1 \} \rVert_{L^{\infty}(0,t;L^q_{\Omega}(L^2_x))} \\
        \lesssim \lVert a_1 \rVert_{L^{\infty}(0,t;L^{2q}_{\Omega})} \lVert w_1 \rVert_{L^{\infty}(0,t;L^{2q}_{\Omega}(L^2_x))}
        \overset{\eqref{eq:w1est},\eqref{eq:a1est}}&{\lesssim} \lVert v_{1,0} \rVert_{L^{2q}_{\Omega}(L^2_x)}^2 + q \beta^2 t,
    \intertext{as well as}
        \Bigl\lVert \int_0^t P(t-t')\Pi i \kappa a_1^2\{u^*, u^*_x, u^*_x\}\d t' \Bigr\rVert_{L^{q}_{\Omega}(L^2_x)}
        \overset{\eqref{eq:detdecayconvest1}}&{\lesssim} \lVert a_1^2 \{ u^*, u^*_x, u^*_x \} \rVert_{L^{\infty}(0,t;L^q_{\Omega}(L^2_x))} \\
        \lesssim \lVert a_1 \rVert_{L^{\infty}(0,t;L^{2q}_{\Omega})}^2
        \overset{\eqref{eq:a1est}}&{\lesssim} \lVert v_{1,0} \rVert_{L^{2q}_{\Omega}}^2 + q \beta^2 t,
    \intertext{and}
        \Bigl\lVert \int_0^t P(t-t') \Pi \beta^2 u^*\d t' \Bigr\rVert_{L^q_{\Omega}(L^2_x)} \overset{\eqref{eq:Pdecay}}&{\lesssim} \beta^2 t.
    \end{align*}
    It only remains to estimate the stochastic integrals in \eqref{eq:w2decomp}.
    For the first we have
    \begin{align*}
        \Bigl\lVert \int_0^t  P(t-t')\Pi i w_1 \Phi \d W(t') \Bigr\rVert_{L^q_{\Omega}(L^2_x)} 
        \overset{\eqref{eq:stochdecayconvest}}&{\lesssim} \sqrt{q}\beta t^{\frac{1}{2}}\lVert w_1 \rVert_{L^{\infty}(0,t;L^q_{\Omega}(L^2_x))} \\
        \leq \tfrac{1}{2} \lVert w_1 \rVert_{L^{\infty}(0,t;L^q_{\Omega}(L^2_x))}^2 + \tfrac{1}{2} q \beta^2 t
        \overset{\eqref{eq:w1est}}&{\lesssim} \lVert v_{1,0} \rVert_{L^{q}_{\Omega}(L^2_x)}^2 + q \beta^2 t,
    \intertext{and for the second}
        \Bigl\lVert \int_0^t  P(t-t')\Pi i a_1 u^*_x \Phi \d W(t') \Bigr\rVert_{L^q_{\Omega}(L^2_x)} 
        \overset{\eqref{eq:stochdecayconvest}}&{\lesssim} \sqrt{q} \beta t^{\frac{1}{2}} \lVert a_1 \rVert_{L^{\infty}(0,t;L^q_{\Omega})} \\
        \leq \tfrac{1}{2} \lVert a_1 \rVert_{L^{\infty}(0,t;L^q_{\Omega})}^2 + \tfrac{1}{2} q \beta^2 t 
        \overset{\eqref{eq:a1est}}&{\lesssim} \lVert v_{1,0} \rVert_{L^q_{\Omega}(L^2_x)}^2 + q \beta^2 t. \qedhere
    \end{align*}
\end{proof}
\begin{proof}[Proof of Proposition~\ref{prop:longterm}]
From our previous ansatz for $u$ and $v_1$ we have the equalities
\begin{subequations}
\begin{align}
\label{eq:uustar1}
    u(t) - u^*(x+\sigma a_1(t)) \overset{\eqref{eq:defzprime}}&{=} u^* - u^*(x+\sigma a_1(t)) + \sigma v_1(t) + z'(t) \\
    \label{eq:uustar2}
    \overset{\eqref{eq:ansatzv1}}&{=} u^* + \sigma a_1(t)u^*_x - u^*(x+\sigma a_1(t)) + \sigma w_1(t) + z'(t).
\end{align}
\end{subequations}
From \eqref{eq:uustar1} and a zeroth-order Taylor expansion we may obtain
\begin{subequations}
\begin{align}
    \label{eq:uustarest1}
    \lVert u(t) - u^*(x+\sigma a_1(t)) \rVert_{L^2_x} \leq C_1 \sigma \lvert a_1(t) \rvert + \sigma \lVert v_1(t) \rVert_{L^2_x} + \lVert z'(t) \rVert_{L^2_x},
\end{align}
for some constant $C_1$ derived from $u^*$.
From \eqref{eq:uustar2} and a first-order Taylor expansion we also get
\begin{align}
    \label{eq:uustarest2}
    \lVert u(t) - u^*(x+\sigma a_1(t)) \rVert_{L^2_x} \leq C_2 \sigma^2 \lvert a_1(t) \rvert^2 + \sigma \lVert w_1(t) \rVert_{L^2_x} + \lVert z'(t) \rVert_{L^2_x},
\end{align}
\end{subequations}
for some constant $C_2$ also derived from $u^*$.
Now set $T = a^{-1}\log(6M)$, where $a$ and $M$ are the constants from \eqref{eq:Pdecay}, and fix some $c_1$, $c_2$, $\eps'$ such that Theorem~\ref{thm:asym2} holds with this choice of $T$ (note that our initial condition corresponds to setting $v_{1,0} = \sigma^{-1}v_0$).
Additionally, set $\tilde{c}_1 = \tfrac{1}{6}\min\{M^{-1}, C_1^{-1} \lVert \mathcal{P} \rVert_{\mathcal{L}(L^2_x;\R)} \}$.
From the assumption that $\lVert v_0 \rVert \leq \tilde{c}_1 \eps$ we obtain
\begin{align*}
    \sigma \lvert a_1(t) \rvert \overset{\eqref{eq:a1def}}&{\leq} C_1^{-1} \frac{\eps}{6}  + \sigma  \lVert \mathcal{P} \rVert_{\mathcal{L}(L^2_x;\R)} \, \Bigl \lVert \int_0^t u^* \Phi \d W(t')\Bigr\rVert_{L^2_x}, \\
    \sigma \lVert v_1(t) \rVert_{L^2_x} \overset{\eqref{eq:v1}}&{\leq} \frac{\eps}{6} + \sigma \Bigl\lVert \int_0^t P(t-t')u^* \Phi \d W(t') \Bigr\rVert_{L^2_x}, \\
    \sigma \lVert w_1(T) \rVert_{L^2_x} \overset{\eqref{eq:w1def}}&{\leq} \tilde{c}_1 \frac{\eps}{6} + \sigma \Bigl\lVert \int_0^T P(T-t')\Pi u^* \Phi \d W(t') \Bigr\rVert_{L^2_x},
\end{align*}
where the third inequality follows from \eqref{eq:Pdecay} since $Me^{-aT} = \frac{1}{6}$ by our choice of $T$.
Using \eqref{eq:PPstrichstoch}, \eqref{eq:PP0strichstoch}, and Lemma~\ref{lem:tail}, we can find constants $\lambda$, $c_2' > 0$, such that
\begin{subequations}
\label{eq:everythingsmall}
\begin{align}
    \label{eq:a1small}
    \mathbb{P}\bigl[C_1 \sigma \lvert a_1 \rvert_{L^{\infty}(0,T)} \geq \tfrac{\eps}{3}\bigr] \leq \exp(-c_2' \sigma^{-2}\eps^2), \\
    \label{eq:v1small}
    \mathbb{P}\bigl[\sigma \lVert v_1 \rVert_{L^{\infty}(0,T;L^2_x)} \geq \tfrac{\eps}{3}\bigr] \leq \exp(-c_2' \sigma^{-2}\eps^2), \\
    \label{eq:w1small}
    \mathbb{P}\bigl[\sigma \lVert w_1(T) \rVert_{L^2_x} \geq \tilde{c}_1 \tfrac{\eps}{3} \bigr] \leq \exp(-c_2' \sigma^{-2}\eps^2),
\end{align}
whenever $\sigma^{-1}\eps \geq \lambda$.
If we take $\eps'$ small enough such that $\tilde{c}_1 \tfrac{\eps'}{3} \geq c_1 \eps'^2$ (if necessary), then by Theorem~\ref{thm:asym2}, this also results in
\begin{equation}
\begin{aligned}
    \label{eq:zprimesmall}
    \mathbb{P}\bigl[\lVert z'\rVert_{L^{\infty}(0,T;L^2_x)} \geq \tilde{c}_1\tfrac{\eps}{3}\bigr] 
    &\leq \mathbb{P}\bigl[\lVert z'\rVert_{L^{\infty}(0,T;L^2_x)} \geq c_1 \eps^2\bigr] \\
    &= \mathbb{P}\bigl[\tau_{z'} < T \bigr] \\
    &\leq \mathbb{P}\bigl[\tau_{z'} < \tau_{v_1}\bigr] + \mathbb{P}\bigl[\tau_{v_1} < T \bigr] \\
    \overset{\eqref{eq:zprimeprobest},\eqref{eq:v1small}}&{\leq} \exp(-c_2 \sigma^{-2}\eps^2) + \exp(-c_2' \sigma^{-2}\eps^2),
\end{aligned}
\end{equation}
for all $\eps \leq \eps'$.
If we additionally take $\eps'$ smaller (if necessary) such that $\frac{C_1\sqrt{3\tilde{c}_1}}{\sqrt{C_2\eps'}} \geq 1$, then we also get
\begin{equation}
    \label{eq:a1small2}
      \mathbb{P}\bigl[C_2 \sigma^2\lvert a_1 \rvert_{L^{\infty}(0,T)}^2 \geq \tilde{c}_1\tfrac{\eps}{3}\bigr] 
      = \mathbb{P}\Bigl[C_1 \sigma\lvert a_1 \rvert_{L^{\infty}(0,T)} \geq \tfrac{C_1 \sqrt{3\tilde{c}_1}}{\sqrt{C_2\eps}} \tfrac{\eps}{3}\Bigr]
      \overset{\eqref{eq:a1small}}{\leq} \exp(-c_2' \sigma^{-2}\eps^2),
\end{equation}
\end{subequations}
for all $\eps \leq \eps'$.
Equation \eqref{eq:uustarest1}, a simple union bound and the fact that $\tilde{c}_1 \leq 1$ now gives
\begin{align*}
    \mathbb{P}[\lVert u(\cdot) - u^*(x + \sigma a_1(\cdot)) \rVert_{L^{\infty}(0,T;L^2_x)} \geq \eps]
    &\leq \mathbb{P}\bigl[C_1 \sigma\lvert a_1 \rvert_{L^{\infty}(0,T)} \geq \tfrac{\eps}{3}\bigr] \\
    &\quad+ \mathbb{P}\bigl[\sigma \lVert v_1 \rVert_{L^{\infty}(0,T;L^2_x)} \geq \tfrac{\eps}{3}\bigr] \\
    &\quad+ \mathbb{P}\bigl[\lVert z' \rVert_{L^{\infty}(0,T;L^2_x)} \geq \tilde{c}_1 \tfrac{\eps}{3}\bigr]\\
    \overset{\eqref{eq:everythingsmall}}&{\leq} 3 \exp(-c_2' \sigma^{-2} \eps^2) + \exp(-c_2 \sigma^{-2}\eps^2).
\intertext{Similarly, from \eqref{eq:uustarest2} we get}
    \mathbb{P}[\lVert u(T) - u^*(x+\sigma a_1(T)\rVert_{L^2_x} \geq \tilde{c}_1 \eps \bigr]
    &\leq \mathbb{P}\bigl[C_2 \sigma^2\lvert a_1(T) \rvert^2 \geq \tilde{c}_1 \tfrac{\eps}{3}\bigr]\\
    &\quad+ \mathbb{P}\bigl[\sigma \lVert w_1(t) \rVert_{L^2_x} \geq \tilde{c}_1 \tfrac{\eps}{3}\bigr] \\
    &\quad+ \mathbb{P}\bigl[\lVert z'(T) \rVert_{L^2_x} \geq \tilde{c}_1 \tfrac{\eps}{3}\bigr]\\
    \overset{\eqref{eq:everythingsmall}}&{\leq} 3 \exp(-c_2' \sigma^{-2} \eps^2) + \exp(-c_2 \sigma^{-2}\eps^2).
\end{align*}
(note that although we wrote $L^{\infty}(0,T)$ in \eqref{eq:everythingsmall}, we could have also written $C([0,T])$ so the estimate is valid).
The result follows by choosing $\tilde{c}_2 = \min\{c_2, c_2'\}$.
\end{proof}

\appendix
\section{Hilbert--Schmidt operators}
\label{app:hs}
\begin{proof}[Proof of Proposition~\ref{prop:hs}]
Fix some $\phi \in L^2(\R;\R)$, and define for any $\psi \in L^2_x$ the following map:
\begin{align*}
    \Phi_{\psi} \colon f \mapsto \psi * f.
\end{align*}
Recall that with this notation $\Phi = \Phi_{\phi}$ (see \eqref{eq:defPhi}).
Now let $e_k$, $k \in \N$ be any orthonormal basis of $L^2(\R;\R)$. 
We see using Parseval's identity that
\begin{equation*}
        \sum_{k\in \mathbb{N}}(\Phi e_k(x))^2 = \sum_{k\in \mathbb{N}}\langle \phi(\cdot - x), e_k \rangle^2_{L^2_x} 
        = \lVert \phi(\cdot - x )\rVert_{L^2_x}^2 \overset{\eqref{eq:defbeta}}{=} \beta^2,
\end{equation*}
which shows \eqref{eq:Fbeta}.
Using Fubini's theorem and Parseval's identity, we can also compute
\begin{align*}
        \lVert u\Phi \rVert_{\mathcal{L}_2(L^2(\R;\R);L^2_x)}^2
        &=\sum_{k \in \N} \lVert u \Phi e_k \rVert_{L^2_x}^2 
        =\sum_{k \in \N} \int_{\R}\lvert u(x) \rvert^2 \langle \phi(\cdot - x), e_k \rangle_{L^2_x}^2\d x \\
        &= \int_{\R}\lvert u(x) \rvert^2 \sum_{k \in \N} \langle \phi(\cdot - x), e_k \rangle_{L^2_x}^2\d x 
        = \int_{\R}\lvert u(x) \rvert^2 \lVert \phi(\cdot - x) \rVert_{L^2_x}^2 \d x \\
        &= \lVert u \rVert_{L^2_x}^2 \lVert \phi \rVert_{L^2_x}^2,
\end{align*}
which shows \eqref{eq:hilbertschmidteq}.

To show \eqref{eq:hilbertschmidtest} we will make use of complex interpolation.
Thus, we will now break convention and regard $H^s_x$ and $L^2_x$ as complex spaces for the rest of this section.
We will show the complexified estimate
\begin{equation}
    \label{eq:hilbertschmidtcomplex}
    \lVert u \Phi \rVert_{\mathcal{L}_2(L^2_x;H^s_x)} \leq C_s \lVert \phi \rVert_{H^s_x} \lVert u \rVert_{H^s_x}.
\end{equation}
The result then follows after noting that an orthonormal basis of the real Hilbert space $L^2(\R;\R)$ is also an orthonormal basis of $L^2_x$ when the latter is regarded as a complex Hilbert space.
We first show by induction that \eqref{eq:hilbertschmidtcomplex} holds when $s = 2n$ for some nonnegative integer $n$.
By repeating the previous calculation, we find again that
\begin{align*}
    \lVert u \Phi \rVert_{\mathcal{L}_2(L^2_x;L^2_x)} = \lVert u \rVert_{L^2_x}\lVert \phi \rVert_{L^2_x},
\end{align*}
which implies the base case.
Therefore, we now assume that the statement holds for some $n$.
By elementary computations, we find
\begin{align*}
        (1-\Delta) (u \Phi f) 
        &= (1-\Delta)(u (\phi * f)) \\
        &= u (\phi * f) - \Delta u (\phi * f) - 2\, \partial_x u (\partial_x \phi * f) - u (\Delta \phi * f) \\
        &= u \Phi f - \Delta u \Phi f - 2\, \partial_x u (\Phi_{\partial_x\phi}f) - u (\Phi_{\Delta \phi}f),
\intertext{so that}
    (1-\Delta) (u \Phi) &= u \Phi - \Delta u \Phi - 2\, \partial_x u \Phi_{\partial_x\phi} - u \Phi_{\Delta\phi}.
\end{align*}
Combining this with the triangle inequality and the induction hypothesis gives
\begin{align*}
        \lVert u \Phi \rVert_{\mathcal{L}_2(L^2_x;H^{n+2}_x)}
        &= \lVert (1-\Delta)(u \Phi)\rVert_{\mathcal{L}_2(L^2_x;H^n_x)} \\
        &\leq C \bigl(\lVert u \rVert_{H^n_x} \lVert \phi \rVert_{H^n_x}
        + \lVert \Delta u \rVert_{H^n_x} \lVert \phi \rVert_{H^n_x}
        + 2\lVert \partial_x u \rVert_{H^n_x} \lVert \partial_x \phi \rVert_{H^n_x}
        + \lVert u \rVert_{H^n_x} \lVert \Delta \phi \rVert_{H^n_x} \bigr) \\
        &\leq C' \lVert u \rVert_{H^{n+2}_x}\lVert \phi \rVert_{H^{n+2}_x}.
\end{align*}
Now let $s \in [0,\infty)$ be arbitrary, let $n$ be an integer such that $2n \geq s$, let $\theta \in [0,1]$ be such that $s = 2n\theta$, and consider the bilinear map
\begin{equation*}
        B \colon (u,\phi) \mapsto u \cdot \Phi_{\phi}.
\end{equation*}
We have already shown that $B$ is bounded from $L^2_x \times L^2_x$ to $\mathcal{L}_2(L^2_x;L^2_x)$ and from ${H^{2n}_x \times H^{2n}_x}$ to ${\mathcal{L}_2(L^2_x,H^{2n}_x)}$.
Thus, by complex interpolation (using the notation $[\cdot,\cdot]_{\theta}$ for the intermediate space) it follows that $B$ is also bounded from
\begin{align*}
[L^2_x,H^{2n}_x]_{\theta} \times [L^2_x,H^{2n}_x]_{\theta} =  H^{s}_x \times H^{s}_x \\
\end{align*}
to
\begin{align}
\label{eq:hsinterpol}
[\mathcal{L}_2(L^2_x,L^{2}_x), \mathcal{L}_2(L^2_x,H^{2n}_x)]_{\theta} = \mathcal{L}_2(L^2_x,H^{s}_x).
\end{align}
For the interpolation of bilinear operators we have used~\cite[Theorem 4.4.1]{bergh_interpolation_1976}, and the isomorphism \eqref{eq:hsinterpol} is shown for $\gamma$-radonifying operators (which generalize Hilbert--Schmidt operators) in~\cite[Theorem 9.1.25]{hytonen_analysis_2016}.
\end{proof}

\section{Stochastic Strichartz estimates}
\label{app:stochstrich}
To prove \eqref{eq:strichstoch} we distinguish between the cases $p = 2$ and $p > 2$.
\begin{proof}[Case $p > 2$]
For every $t' \in [0,T]$, define the operator
\begin{align*}
        \Psi(t') : H^{s}_x &\to L^r(0,T;H^{s,p}_x) \\
        \psi &\mapsto 1_{[t',T]}(\cdot)S(\cdot - t')\psi,
\end{align*}
and observe that $\lVert \Psi(t') \rVert_{\mathcal{L}(H^{s}_x;L^r(0,T;H^{s,p}_x))} 
\leq \lVert \Psi(0) \rVert_{\mathcal{L}(H^{s}_x;L^r(0,T;H^{s,p}_x))} \leq L$
for some $L < \infty$ which is independent of $T$ by \eqref{eq:strichhom}.

Since $p \in (2,\infty)$, the space $L^p_x$ is $2$-smooth~\cite[Proposition 3.5.30]{hytonen_analysis_2016}.
Using the lifting operator $(1 - \Delta)^{\frac{s}{2}}$, this property immediately extends to $H^{s,p}_x$.
Since $r \in (4,\infty)$, the space $L^r(0,T;H^{s,p}_x)$ has this property as well (see for instance~\cite[Proposition 2.2]{van_neerven_maximal_2022}).
Thus, using our definition of $\Psi$ we can rewrite and estimate
\begin{align*}
    \Bigl\lVert \int_0^{\cdot} S(\cdot-t') h(t')\Phi & \d W(t') \Bigr\rVert_{L^q_{\Omega}(L^r(0,T;H^{s,p}_x))} 
    = \Bigl\lVert \int_0^T \Psi(t')h(t')\Phi \d W(t') \Bigr\rVert_{L^q_{\Omega}(L^r(0,T;H^{s,p}_x))} \\
    &\leq C \sqrt{q} \lVert \Psi h \Phi \rVert_{L^q_{\Omega}(L^2(0,T;\gamma(L^2(\R;\R);L^r(0,T;H^{s,p}_x))))} \\
    &\leq C L \sqrt{q} \lVert h \Phi \rVert_{L^q_{\Omega}(L^2(0,T;\mathcal{L}_2(L^2(\R;\R);H^{s}_x)))} \\
    \overset{\eqref{eq:hilbertschmidtest}}&{\leq} C' L \sqrt{q} \lVert \phi \rVert_{H^{s}_x} \lVert h \rVert_{L^q_{\Omega}(L^2(0,T;H^{s}_x))}.
\end{align*}
The first inequality follows from~\cite[Theorem 1.1]{seidler_exponential_2010}, and the second follows from the left-ideal property of $\gamma$-radonifying operators (which can easily be seen from the definition) and the boundedness of $\Psi$.
\end{proof}
\begin{proof}[Case $p = 2$]
Since $(r,p)$ satisfies \eqref{eq:defadmissible} we have $r = \infty$.
Using the fact that $S(t)$ is unitary on $H^{s}_x$ and using~\cite[Theorem 1.1]{seidler_exponential_2010} again we find
\begin{align*}
    \Bigl\lVert \int_0^{\cdot} S(\cdot-t')h(t')\Phi & \d W(t') \Bigr\rVert_{L^q_{\Omega}(L^{\infty}(0,T;H^{s}_x))}
    = \Bigl\lVert \int_0^{\cdot} S(-t')h(t')\Phi \d W(t') \Bigr\rVert_{L^q_{\Omega}(L^{\infty}(0,T;H^{s}_x))} \\
    &\leq C \sqrt{q} \lVert S(-\cdot)h(\cdot)\Phi \rVert_{L^q_{\Omega}(L^2(0,T;\mathcal{L}_2(L^2(\R;\R);H^{s}_x)))} \\
    &= C \sqrt{q} \lVert h\Phi \rVert_{L^q_{\Omega}(L^2(0,T;\mathcal{L}_2(L^2(\R;\R);H^{s}_x)))} \\
    \overset{\eqref{eq:hilbertschmidtest}}&{\leq} C' \sqrt{q} \lVert \phi \rVert_{H^{s}_x} \lVert h \rVert_{L^q_{\Omega}(L^2(0,T;H^{s}_x))}.
\end{align*}
The continuity in $H^{s}_x$ follows by a routine approximation argument.
\end{proof}

\printbibliography

\end{document}